\documentclass[a4paper,11pt]{article}

\usepackage[latin1]{inputenc}
\usepackage[T1]{fontenc}
%\usepackage{a4}
%\usepackage{parskip}

% Eigene Eintragungen !!!
%\usepackage{pstricks, pst-plot, pst-node, pst-tree}
\usepackage{array}
\usepackage{fancyhdr}
\usepackage{longtable}
\usepackage{tabularx}
\usepackage{rotating}

\usepackage{makeidx}
\usepackage{amsmath}
\usepackage{amsthm}
\usepackage{amssymb}
\usepackage[mathscr]{eucal}
\usepackage{enumerate}
\usepackage[dvips]{epsfig}
\usepackage{float}  % fuer die Option [H] (genau hier) bei picture usw.
\usepackage{ifthen} % Fuer die Abfrage mit/ohne Beweise
\usepackage{extarrows}  % Beschriftete Pfeile
\usepackage{stmaryrd}   % Sonderzeichen wie \boxcircle etc.
\usepackage{arydshln}     % Um in Tabellen/array Linien zu zeichnen
\usepackage{multirow}
\usepackage{mathtools}
\usepackage[dvips]{geometry}
\geometry{a4paper,left=2cm,right=2cm, top=2cm, bottom=2cm}

\usepackage{listings}
\usepackage{subfig}
\usepackage{dsfont}
\usepackage{xspace}
\usepackage{color}
\usepackage{epstopdf}
\usepackage[bookmarksopen,bookmarksnumbered,colorlinks=true,linkcolor=red,citecolor=blue]{hyperref}
%\hypersetup{pdfborder=000}
%\hypersetup{linkcolor=red!25!black,
%citecolor=blue!30!black,
%urlcolor=blue!25!green!25!black}

%\theoremstyle{plain}
%\newtheorem{Def}{Definition}[section]
%\newtheorem{Sat}[Def]{Proposition}
%\newtheorem{The}[Def]{Theorem}
%\newtheorem{Kor}[Def]{Corollary}
%\newtheorem{Bem}[Def]{Remark}
%\newtheorem{Lem}[Def]{Lemma}
%\newtheorem{Hil}[Def]{Hilfssatz}
%\newtheorem{Bsp}[Def]{Example}

%\numberwithin{equation}{section}

\theoremstyle{plain}
\newtheorem{thm}{Theorem}[section]
\newtheorem{prop}[thm]{Proposition}
\newtheorem{lma}[thm]{Lemma}

\theoremstyle{definition}

\theoremstyle{remark}
\newtheorem{remark}{Remark}[section]

\newcommand{\Prob}{\operatorname{P}}

    % *-version setzt die limits a die korrekte Stelle unter den Operator!
%\newcommand{\ind}{{\bf{1}}}

\newcommand{\tr}{\operatorname{tr}}

 % Grosse Null in Matrix
  % Grosse Null in Matrix
%\newcommand{\E}{\operatorname{E}}

\newcommand{\costs}{\operatorname{cost}}

\newcommand{\Oo}{\mathcal{O}}

\hyphenation{dif-fe-ren-tial mi-ni-mi-zed}

% ------------------------------------

%\onehalfspacing
\setlength{\parindent}{0pt} %cm

\newcommand{\Y}{Y}

\newcommand{\YY}{Y^{N,K,M}}
\newcommand{\WW}{W^{K,M}}

\newcommand{\sI}{\sum_{l=0}^{m-1} \int_{t_l}^{t_{l+1}}}

\newcommand{\su}{\sum_{l=0}^{m-1}}
\newcommand{\I}{\int_{t_l}^{t_{l+1}}}

\newcommand{\MIL}{\text{MIL}}
\newcommand{\MILA}{\text{MIL-A1}}
\newcommand{\MILB}{\text{MIL-A2}}

\newcommand{\LIE}{\text{LIE}}
\newcommand{\EES}{\text{EXE}}

\newcommand{\DFM}{\text{DFM}}
\newcommand{\DFMA}{\text{DFM-A1}}
\newcommand{\DFMB}{\text{DFM-A2}}
\newcommand{\qd}{q_{\text{DFM}}}
\newcommand{\qee}{q_{\text{EES}}}
\newcommand{\qm}{q_{\text{MIL}}}
\newcommand{\CC}{\bar{c}}

\allowdisplaybreaks

\title{{A Derivative-Free Milstein Type Approximation Method for
SPDEs covering the Non-Commutative Noise case
}}

\author{Claudine von Hallern$^1$%
\thanks{
e-mail: vonhallern@math.uni-kiel.de}%$^1$
\ \ and Andreas R\"o\ss ler$^2$\thanks{e-mail: roessler@math.uni-luebeck.de}%$^2$
\bigskip
\\
\small{$^1$Department of Mathematics, Christian-Albrechts-Universit\"at zu Kiel,} \\
\small{Christian-Albrechts-Platz 4, 24118 Kiel, Germany} \\[0.2cm]
\small{$^2$Institute of Mathematics, Universit\"at zu L\"ubeck,} \\
\small{Ratzeburger Allee 160, 23562 L\"ubeck, Germany} %\\
}

%\date{\today}
\date{June 14, 2020}

\begin{document}

\maketitle

\begin{abstract}
Higher order schemes for stochastic partial differential 
equations that do not possess 
commutative noise require
the simulation of iterated stochastic integrals. 
In this work, we propose a derivative-free Milstein type scheme 
to approximate the mild solution of stochastic partial differential equations
that need not to fulfill a commutativity condition for the noise term and
which can flexibly be combined with some
approximation method for the involved iterated integrals.
Recently, the authors introduced two algorithms 
to simulate such iterated stochastic integrals; these clear the way for the 
implementation of the proposed higher order scheme.
We prove the mean-square convergence of the introduced derivative-free
Milstein type scheme which attains the same order as the original
Milstein scheme. The original scheme, however, is definitely outperformed
when the computational cost is taken into account
additionally, that is, in terms of the effective order of
convergence. We derive the effective order of convergence 
for the derivative-free Milstein type scheme analytically 
in the case that one of the recently proposed algorithms for the approximation of 
the iterated stochastic integrals is applied.
Compared to the exponential Euler scheme and the original Milstein scheme,
the proposed derivative-free Milstein type scheme
possesses at least the same and in most cases even a higher 
effective order of convergence
depending on the particular SPDE under consideration. 
These analytical results are
illustrated and confirmed with numerical simulations.
\end{abstract}
%
%
%
% Motivation
\section{Motivation}
For the approximation of stochastic partial differential equations (SPDEs)
with commutative noise, some higher order schemes such
as the Milstein schemes in~\cite{MR2996432,MR3027891,MR3320928,MR2677551}, 
the derivative-free versions~\cite{2015arXiv150908427L} and \cite{MR3011387}, or the
Wagner-Platen type scheme~\cite{MR3534472} were derived and implemented in the last years.
Concerning equations that do not need to possess commutative noise,
see \cite{MR2174871,MR3305472,MR2127642,MR2520127,schnoerr2016cox}
for some applications, it was, however, an open question
how to implement a higher order
scheme 
due to the iterated stochastic integrals that are involved and 
the numerical scheme of choice was so far some Euler scheme, for example, 
the exponential Euler or the linear implicit Euler, 
see~\cite{MR2471778,MR1825100,MR3047942}. 
Recently, the authors presented two algorithms
to obtain an approximation of such 
stochastic integrals, see~\cite{MR3949104}. 
In \cite{MCQMC2018}, the Milstein scheme proposed by 
A.~Jentzen and M.~R\"ockner \cite{MR3320928} has been analyzed for 
non-commutative equations in the case that it is 
combined with the algorithms proposed in \cite{MR3949104}. 
However, as the main drawback the Milstein scheme requires the 
evaluation of the derivative of an operator in each time step. This is the 
reason that its computational complexity increases quadratically w.r.t.\ the dimension of the state space
compared to the Euler scheme with linearly growing computational complexity. 
In the present paper, we propose a derivative-free 
numerical scheme to efficiently 
approximate the mild solution of SPDEs which do not need to have 
commutative noise, that is, the commutativity condition 
\begin{equation}\label{Comm}
    \big( B'(v) (B(v) u) \big) \tilde{u} =
    \big( B'(v) (B(v) \tilde{u}) \big) u
\end{equation}
for all $v\in H_{\beta}$, $u, \tilde{u} \in U_0$  has \emph{not} to 
be fulfilled. Our goal is to approximate the mild solution to SPDEs of type
\begin{equation}\label{SPDE}
  \mathrm{d} X_t = \big( AX_t+F(X_t)\big) \, \mathrm{d}t + B(X_t) \, \mathrm{d}W_t, 
  \quad t\in(0,T], \quad X_0 = \xi
\end{equation}
with a scheme that obtains the same temporal order of convergence 
as the Milstein scheme, however, without the need to evaluate any derivative
and with significantly reduced computational complexity which is 
of the same order of magnitude as for the Euler scheme,
 i.e., which depends only linearly on the dimension of the state space.
For details on the notation, we refer to Section~\ref{Sec:Setting}.
In general, the Milstein scheme proposed in \cite{MR3320928} applied to \eqref{SPDE}
reads as $\YY_0 = P_N\xi$ and
\begin{align} \label{Milstein}
  \YY_{m+1} &= P_N e^{Ah} \bigg(\YY_m + hF(\YY_m) 
  + B(\YY_m) \Delta \WW_m \nonumber \\
  &\quad + \int_{t_m}^{t_{m+1}} B'(\YY_m) \Big(\int_{t_m}^{s}
  B(\YY_m) \, \mathrm{d}W^K_r\Big)
  \, \mathrm{d}W^K_s \bigg)
\end{align}
for some $K,M,N \in\mathbb{N}$, $h=\frac{T}{M}$, and $m\in\{0,\ldots,M-1\}$.
Numerical schemes that attain higher orders of convergence involve
iterated stochastic integrals and it is not possible to rewrite these expressions
such as
\begin{equation}\label{IteratedSPDE}
  \int_t^{t+h} B'(X_t) \Big( \int_t^s B(X_t) \, \mathrm{d}W_r^K \Big) \, \mathrm{d}W_s^K
\end{equation}
for $h>0$, $t,t+h\in[0,T]$ and $K\in\mathbb{N}$ 
in terms of increments of the approximated $Q$-Wiener process $(W_t^K)_{t\in[0,T]}$
like in the commutative case, see~\cite{MR3320928}. 
Therefore, methods such as the derivative-free Milstein 
type scheme presented in~\cite{MR3842926}, which was
developed based on this assumption, are not applicable to approximate the
mild solution of these equations.
In~\cite{MR3949104}, we introduced two methods
to approximate iterated stochastic integrals 
\begin{equation}\label{DoubleIntSPDE}
 \int_t^{t+h} \Psi\left(\Phi \int_t^s  \mathrm{d}W_r\right) \mathrm{d}W_s
\end{equation}
with $t\geq 0$, $h>0$
for some operators $\Psi\in L(H,L(U,H)_{U_0})$, $\Phi\in L(U,H)_{U_0}$, and a $Q$-Wiener process
$(W_t)_{t\in[0,T]}$ of trace class. Therewith, it is possible to 
implement the Milstein scheme~\eqref{Milstein} from \cite{MR3320928},  
we refer to \cite{MCQMC2018} for details.
However, the evaluation of the derivative in the Milstein scheme is costly.
Precisely, the computational cost needed to evaluate this term is of
order $\mathcal{O}(N^2 K)$ in each time step, see~\cite{MR3842926,MCQMC2018}. 
This computational effort can be reduced by one order of magnitude
if the derivative is replaced by some customized approximation --
see also the detailed discussion of this issue in~\cite{MR3842926}. \\ \\
In this work, we design a derivative-free numerical scheme to approximate the
mild solution of equation~\eqref{SPDE} which can be combined with any method 
to simulate the iterated stochastic integrals involved in the scheme, see also \cite{Diss}.
First, we introduce the setting in which 
we work and state results on the convergence of the proposed 
scheme -- both, with and without an approximation of the iterated integrals. 
The same theoretical order of convergence
as for the Milstein scheme can be obtained. 
Moreover, we illustrate the advantages of such a higher order derivative-free 
scheme with a concrete example in Section~\ref{Sec:EffOrder}. 
We combine the scheme with Algorithm~1 presented in~\cite{MR3949104}, 
which is based on a truncated Fourier series expansion,
and derive the effective order of convergence for this scheme
-- a concept that combines the theoretical 
order of convergence with the computational effort
based on a cost model introduced in~\cite{MR3842926}.
In terms of this effective order of convergence, the original Milstein
scheme \eqref{Milstein}
is outperformed by the proposed derivative-free Milstein type scheme.
Compared to the exponential Euler scheme, the
proposed scheme obtains a higher effective 
order of convergence for a large set of parameter values
when combined with  
Algorithm 1 from~\cite{MR3949104}.
In Section~\ref{NumExampleNonComm}, we analyze the mean-square
error and the computational cost for the derivative-free 
Milstein type scheme numerically. 
The presented simulations confirm a higher effective order
of convergence in contrast to the original Milstein scheme 
and at least the same or even higher effective order of convergence 
in contrast to the Euler scheme for the examples
considered in Section~\ref{NumExampleNonComm}.
Finally, in Sections~\ref{Sec:Conclusion} and~\ref{Sec:Proofs}, we give some 
concluding remarks and the proofs for the convergence results.
\section{Approximation of Solutions for SPDEs}
In this section, we present a derivative-free Milstein type scheme 
for SPDE~\eqref{SPDE}
which does not need to have commutative noise. 
Precisely, we introduce a scheme which can be coupled
with an arbitrary method for the approximation 
of the involved iterated stochastic integrals. For example, 
when combined
with the algorithms introduced in \cite{MR3949104}
for the simulation of twice-iterated integrals, the
theoretical order of 
convergence of the original Milstein scheme can be maintained.
\subsection{Framework}\label{Sec:Setting}
Throughout this work, we assume the framework presented in the following.
Let $(H,\langle \cdot,\cdot \rangle_H)$ and $(U,\langle \cdot,\cdot\rangle_U)$ denote 
some separable real-valued Hilbert spaces  and let $T\in(0,\infty)$ be some fixed 
time point. Further, let the operator $Q \in L(U)$ be non-negative, symmetric and 
have finite trace. Then, the subspace
$U_0 \subset U$ is defined as $U_0 =  Q^{\frac{1}{2}}U$.
Moreover, we consider some complete probability space $(\Omega,\mathcal{F},\Prob)$ 
and a $U$-valued $Q$-Wiener process $(W_t)_{t\in[0,T]}$ with respect to the
filtration $(\mathcal{F}_t)_{t\in [0,T]}$ which fulfills the usual conditions. 
In terms of 
the eigenvalues of $Q$, denoted as $\eta_j$, with corresponding eigenvectors 
$\tilde{e}_j$ for $j\in\mathcal{J}$
with some countable index set $\mathcal{J}$ forming an orthonormal basis
$\{ \tilde{e}_j : j \in J\}$ of $U$ (see~\cite{MR2329435}),
we obtain the following series representation 
of the $Q$-Wiener process, see~\cite{MR2329435},
\begin{align}
   W_t = \sum_{\substack{j\in\mathcal{J} \\ \eta_j \neq 0}} 
   \sqrt{\eta_j} \, \tilde{e}_j \, \beta_t^j, \quad t\in[0,T].
\end{align}
In this representation, the stochastic processes $(\beta_t^j)_{t\in[0,T]}$ denote independent 
real-valued Brownian motions for all $j\in\mathcal{J}$
with $\eta_j\neq 0$.
Below, the following notation is used for different sets 
of linear operators. The space of linear and bounded operators 
mapping from $U$ to $H$ that are restricted to the subspace $U_0$ is called
$(L(U,H)_{U_0},\|\cdot\|_{L(U,H)})$  with
$L(U,H)_{U_0} := \{T \colon U_0 \to H \, | \, T\in L(U,H) \}$, by $L_{HS}(U,H)$, we 
denote the set
of Hilbert-Schmidt operators mapping from $U$ to $H$ 
and, finally, we denominate $L^{(2)}(U,H) = L(U,L(U,H))$ 
and $L_{HS}^{(2)}(U,H)= L_{HS}(U,L_{HS}(U,H))$. 
\\ \\
For the existence and uniqueness of a mild solution of SPDE~\eqref{SPDE} 
and the validity of the proofs of convergence in Section~\ref{Proof}, we 
assume the following conditions.
\begin{description}
  \item[(A1)]  The linear operator 
  $A \colon \mathcal{D}(A)\subset H \to H$ is the generator of an analytic $C_0$-semigroup
  $S(t) = e^{At}$ for all $t\geq 0$. 
  We denote the eigenvalues of $-A$ by $\lambda_i \in (0,\infty)$ and the corresponding 
  eigenvectors by $e_i$ for $i \in \mathcal{I}$ and some countable index set $\mathcal{I}$,
  that is, $-Ae_i =\lambda_i e_i$ for all $i\in\mathcal{I}$. Furthermore, let
  $\inf_{i\in\mathcal{I}}\lambda_i >0$ and let
  the eigenfunctions $\{ e_i : i\in\mathcal{I}\}$ of $-A$
  form an orthonormal basis of $H$, see~\cite{MR1873467}, and 
  \begin{equation*}
    Av = \sum_{i\in\mathcal{I}} -\lambda_i\langle v,e_i\rangle_H e_i
  \end{equation*}
  for all $v\in \mathcal{D}(A)$. We introduce the real Hilbert
  spaces
  $H_r := \mathcal{D}((-A)^r)$ for $r\in[0,\infty)$
  with norm $\|x\|_{H_r} =\|(-A)^rx\|_H$ for
  $x\in H_r$.
  \item[(A2)] Let $\beta\in[0,1)$ and assume that 
  $F \colon H_{\beta} \to H$ is twice continuously Fr\'{e}chet differentiable
  with $\sup_{v\in H_{\beta}} \|F'(v)\|_{L(H)} < \infty$ and
  $\sup_{v\in H_{\beta}} \|F''(v)\|_{L^{(2)}(H_{\beta},H)}<\infty$.
  \item[(A3)] The operator $B \colon H_{\beta} \to L(U,H)_{U_0}$ 
  is assumed to be twice continuously Fr\'{e}chet 
  differentiable such that % with bounded derivatives 
  $\sup_{v\in H_{\beta}} \|B'(v)\|_{L(H,L(U,H))} < \infty$
  and $\sup_{v\in H_{\beta}} \|B''(v)\|_{L^{(2)}(H,L(U,H))}<\infty$. 
  Further, let $B(H_{\delta}) \subset L(U,H_{\delta})$ for some $\delta
  \in (0,\tfrac{1}{2})$ and assume that 
  \begin{align*}
      \| B(u) \|_{L(U,H_{\delta})} &\leq C ( 1 +
      \| u \|_{H_{\delta}} ) , \\
      \| B'(v) P B(v) - B'(w) P B(w) \|_{L_{HS}^{(2)}(U_0,H)} &\leq C
      \| v - w \|_{H} , \\
      \| (-A)^{-\vartheta} B(v) Q^{-\alpha} \|_{L_{HS}(U_0,H)} &\leq C
      (1 + \| v \|_{H_{\gamma}})
  \end{align*}
  for some constant $C>0$, all $u \in H_{\delta}$, $v, w \in H_{\gamma}$, where
  $\gamma \in \left[ \max( \beta, \delta),
  \delta + \frac{1}{2} \right)$, $\alpha \in (0,\infty)$, $\vartheta \in \left( 0,
  \frac{1}{2} \right)$, $\beta \in [0,\delta+\tfrac{1}{2})$,
  any projection operator $P \colon H \to \text{span}\{e_i: i\in\tilde{\mathcal{I}}\}\subset H$
  with finite index set $\tilde{\mathcal{I}}\subset \mathcal{I}$ and the 
  case that $P$ is the identity.
  \item[(A4)] The initial value $\xi \colon \Omega \to H_{\gamma}$ is
  $\mathcal{F}_0$-$\mathcal{B}(H_{\gamma})$-measurable and
  it holds $\mathrm{E}\big[\|\xi\|^4_{H_{\gamma}}\big] <\infty$.
  \item[(A5)]
  Assume that at least one of the following conditions is fulfilled:
  \begin{enumerate}[a)]
    %\item[(A5a)] 
    \item \label{A5a} $Q^{\frac{1}{2}}$ is a trace class operator,
    %\item[(A5c)] 
    \item \label{A5c}
    $\|B''(v)(P B(u), P B(u))\|_{L^{(2)}(U,L(U,H))}
    \leq C(1+\|v\|_{H} + \|u\|_{H})$ for all $u,v \in H$, some $C>0$ and
    any projection operator $P \colon H \to \text{span}\{e_i: i\in\tilde{\mathcal{I}}\}\subset H$
    with finite index set $\tilde{\mathcal{I}}\subset \mathcal{I}$.
  \end{enumerate}
\end{description}
In this work, we do not make a difference between the operator $B$ and its extension 
$\tilde{B} \colon H \to L(U,H)_{U_0}$. The operator $\tilde{B}$ is globally Lipschitz continuous as 
$H_{\beta}\subset H$ is dense. With $F$, we deal analogously. 
%\\ \\
%
%
Note that assumptions (A1)--(A4) are the same 
as for the scheme for SPDEs with commutative noise introduced 
in~\cite{MR3842926} and similar to 
the conditions imposed in~\cite{MR3320928} and \cite{MCQMC2018}
for the original Milstein scheme. However, 
the commutativity condition~\eqref{Comm}, which is essential
in~\cite{MR3320928} and \cite{MR3842926}, needs not to
be fulfilled in our setting. On the other hand, assumption (A5) 
is required. 
%  \\ \\
%
%
%
Assumptions (A1)--(A4) assure the existence of a unique mild solution 
$X \colon [0,T] \times \Omega \to H_{\gamma}$ for SPDE
\eqref{SPDE}, see%A.~Jentzen and M.~R\"ockner~
~\cite{MR2852200,MR3320928}. 
Moreover, it holds   
$\big(\sup_{t \in [0,T]} \mathrm{E}\big[ \|X_t \|_{H_{\gamma}}^4 + \| B(X_t)
\|^4 _{L_{HS}(U_0,H_{\delta})} \big]\big) < \infty$ and 
\begin{equation*}
        \sup_{\substack{s,t \in [0,T] \\ s \neq t}}
        \frac{\left( \mathrm{E}\big[ \|X_t-X_s\|_{H_r}^p \big] \right)^{\frac{1}{p}}}
        {|t-s|^{\min(\gamma-r, \frac{1}{2})}}
        <\infty
\end{equation*}
for every $r \in [0,\gamma]$ and $p\in[2,4]$, see \cite{MR2852200}.
\subsection{The Derivative-Free Milstein Type Scheme}\label{Sec:DFM}
We derive a numerical scheme to approximate
the mild solution of SPDE~\eqref{SPDE} in this section.
At first, the infinite dimensional spaces have to be discretized. 
For the solution
space $H$, we introduce the projection operator
$P_N \colon H \to H_N$ that maps $H$ to the finite dimensional subspace 
$H_N := \text{span}\{e_i : i\in\mathcal{I}_N\}$
for some fixed $N\in\mathbb{N}$ with some index
set $\mathcal{I}_N \subset \mathcal{I}$ 
and $|\mathcal{I}_N|=N$. We define this operator as
\begin{equation*}
  P_N x = \sum_{i\in\mathcal{I}_N} \langle x,e_i\rangle_H e_i, \quad x\in H.
\end{equation*}
Analogously, we define the projection
operator $P_K \colon U \to U_K$
to approximate the $Q$-Wiener process for some fixed $K\in\mathbb{N}$ by
\begin{equation*}
  W_t^K := P_K W_t = \sum_{j\in\mathcal{J}_K} \sqrt{\eta_j} \tilde{e}_j \beta_t^j,
  \quad t\in[0,T],
\end{equation*}
with $U_K := \text{span}\{\tilde{e}_j : j \in \mathcal{J}_K\}$ for
some index set $\mathcal{J}_K \subset \mathcal{J}$, $|\mathcal{J}_K| = K$, and 
$\eta_j\neq 0$ for $j\in\mathcal{J}_K$.
In order to discretize the time interval, we work with 
an equidistant time step for legibility of the representation.
Let $h = \frac{T}{M}$ for some $M\in\mathbb{N}$ and define $t_m = m\cdot h$ 
for $m\in\{0,\ldots,M\}$.
The increments of the approximated $Q$-Wiener process are then denoted by
\begin{equation*}
    \Delta \WW_m := W_{t_{m+1}}^K - W_{t_m}^K
    = \sum_{j \in \mathcal{J}_K}
    \sqrt{\eta_j} \, \Delta \beta_m^j \, \tilde{e}_j \quad
%    \Prob\text{-a.s.},
\end{equation*}
where the increments of the real-valued Brownian
motions are given by  $\Delta \beta_m^j = 
\beta_{t_{m+1}}^j-\beta_{t_m}^j$
%$\Prob$-a.s.\ 
for $m \in \{0,\ldots, M-1\}$, $j\in\mathcal{J}_K$. %\\ \\
The Milstein scheme \eqref{Milstein} is computationally expensive 
due to the derivative that has to be evaluated in
each step, see also~\cite{MR3842926} and \cite{MCQMC2018}. 
In order to compare numerical methods in this work, we
consider the so-called effective order of convergence
first introduced in \cite{MR2669396}.
This number combines the theoretical order of convergence 
with the computational cost involved in the calculation of an approximation 
by a particular scheme.
As in~\cite{MR3842926}, the goal is to raise the effective order of convergence 
by means of a customized approximation that is free of derivatives and, in addition, 
computationally less expensive. Here, however, we do not need to assume that the operator $B'B$
fulfills a commutativity condition, that is, condition~\eqref{Comm} is not required. 
In Theorem~\ref{Error:DFM}, we state that for the proposed 
derivative-free Milstein type scheme, the theoretical 
order of convergence 
is the same as for the Milstein scheme given in~\cite{MR3320928}. 
At the same time, compared to the Milstein scheme in~\cite{MR3320928},
the computational effort is significantly reduced for the derivative-free Milstein type scheme. 
That means that the effective order of convergence is a priori
larger for the proposed derivative-free method.
Moreover, compared to the Euler schemes like in~\cite{MR2471778,MR1825100,MR3047942}, 
the computational cost is of the same magnitude while the order of convergence 
w.r.t.\ step size $h$ is at least the same or even significantly higher.
Thus, the scheme that we derive in the
following is more efficient in terms of the effective order of convergence 
than the Euler type schemes
for most parameter sets determined by the SPDE under consideration
if we combine it, for example,
with the algorithms for the simulation of 
the iterated stochastic integrals introduced 
in~\cite{MR3949104},
see Table~\ref{Tab:CompareEffOrder}. 
Precisely, compared to the Euler schemes,
the increase in the computational cost that results from the approximation
of the iterated stochastic integrals can be neglected and we get, 
in many cases, a significantly higher effective order of convergence due to the
higher theoretical order of convergence in the time
step that the derivative-free Milstein type scheme features.
\\ \\
The main idea for the derivative-free Milstein type scheme is
alike to that in the commutative case, see~\cite{MR3842926}, 
which in turn is based on the work for the finite dimensional
setting in~\cite{MR2338537,MR2505871,MR2669396}. The operator $B'B$ 
is approximated by a customized difference operator in such a way that the overall computational cost is decreased
by one order of magnitude.  However, the stage values have to be chosen differently compared to 
the commutative case. Compared to \cite{MR3842926}, the main 
distinction is that we employ one difference term only.
The derivative-free Milstein type scheme yields a discrete process which we denote by 
$(\Y_m^{N,K,M})_{m \in \{0,\ldots, M\}}$ such that $Y_m^{N,K,M}$ is $\mathcal{F}_{t_m}$-$\mathcal{B}(H)$-measurable
for all $m \in \{0,\ldots,M\}$, $M\in\mathbb{N}$. 
We define the derivative-free Milstein type ($\DFM$) scheme as
$Y_{0}^{N,K,M} = P_N \xi$ and
\begin{equation}
  \begin{split} \label{DFM}
  Y_{m+1}^{N,K,M} &= P_N e^{Ah} \bigg( Y_{m}^{N,K,M} + h F(Y_{m}^{N,K,M}) + B(Y_m^{N,K,M}) \Delta W_m^{K,M} \\
  &\quad + \sum_{j\in\mathcal{J}_K} \Big( B \Big( Y_m^{N,K,M} + \sum_{i\in\mathcal{J}_K} P_N B(Y_m^{N,K,M}) 
  \tilde{e}_i \, I^Q_{(i,j),m} \Big) - B\big(Y_m^{N,K,M}) \Big) \tilde{e}_j \bigg)
  \end{split}
\end{equation}
for $m\in \{0,\ldots,M-1\}$, $N,M,K\in\mathbb{N}$. 
For $i,j\in\mathcal{J}_K$ and  $m\in\{0,\ldots,M-1\}$, the term $I^Q_{(i,j),m} = I^Q_{(i,j),t_m,t_{m+1}}$
denotes the iterated stochastic It{\^o} integral 
\begin{equation}\label{DoubleInt}
  I^Q_{(i,j),t_m,t_{m+1}} 
%   = \sqrt{\eta_i \eta_j}
%   \int_{t_m}^{t_{m+1}}\int_{t_m}^s\, \mathrm{d}\beta^i_r\,\mathrm{d}\beta^j_s.
    = \int_{t_m}^{t_{m+1}}\int_{t_m}^s \, \langle \mathrm{d}W_r, \tilde{e}_i \rangle_U \, 
    \langle \mathrm{d}W_s, \tilde{e}_j \rangle_{U} .
\end{equation}
At this point, we assume that the iterated stochastic integrals 
are given exactly in order to consider the error estimate independent
of the approximation error for the iterated integrals. 
We consider the error resulting from the approximation of the mild solution of~\eqref{SPDE} 
without an approximation of the iterated stochastic integral
since this is interchangeable.
Then, in a second step, we conclude from Theorem~\ref{Thm:ErrorTotal} below
that if an approximation of the iterated stochastic integral 
fulfills some specified conditions, 
this estimate remains valid.
\begin{thm}[Convergence of $\DFM$ scheme] \label{Error:DFM}
   Assume that (A1)--(A4) and (A5) hold. Then, there exists a
    constant $C_{Q,T} \in (0,\infty)$, independent of $N$, $K$ and $M$, such
    that for $(Y_m^{N,K,M})_{0 \leq m \leq M}$, defined by the $\DFM$ scheme in
    \eqref{DFM}, it holds
    \begin{equation} \label{Thm:Error-DFM-Estimate:eqn}
      \max_{0 \leq m \leq M}
      \Big( \mathrm{E}\Big[ \big\| X_{t_m} - Y_m^{N,K,M} \big\|_H^2
      \Big] \Big)^{\frac{1}{2}}
      \leq C_{Q,T} \Big( \Big( \inf_{i \in \mathcal{I} \setminus
      \mathcal{I}_N} \lambda_i \Big)^{-\gamma}
      + \Big( \sup_{j \in \mathcal{J} \setminus \mathcal{J}_K}
      \eta_j \Big)^{\alpha} + M^{-\qd}\Big)
    \end{equation}
    for 
    all $N,K,M \in \mathbb{N}$
    and with $\qd = \min(2(\gamma-\beta),\gamma)$. The parameters are determined by 
    assumptions (A1)--(A4).
\end{thm}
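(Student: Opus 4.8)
The plan is to control the error by separating its three sources, namely the spatial discretization, the noise truncation, and the time discretization, which correspond exactly to the three contributions on the right-hand side of \eqref{Thm:Error-DFM-Estimate:eqn}. First I would write the mild solution of \eqref{SPDE} through the variation-of-constants formula
\[
X_{t_{m+1}} = e^{Ah}X_{t_m} + \int_{t_m}^{t_{m+1}} e^{A(t_{m+1}-s)}F(X_s)\,\mathrm{d}s + \int_{t_m}^{t_{m+1}} e^{A(t_{m+1}-s)}B(X_s)\,\mathrm{d}W_s,
\]
and introduce an auxiliary continuous-time process that uses the projected semigroup $P_N e^{At}$ and the truncated noise $(W_t^K)$ but remains exact in time. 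Comparing $X$ with this auxiliary process and invoking the regularity estimates of (A1)--(A4) --- in particular $B(H_\delta)\subset L(U,H_\delta)$ and $\|(-A)^{-\vartheta}B(v)Q^{-\alpha}\|_{L_{HS}(U_0,H)}\le C(1+\|v\|_{H_\gamma})$ together with the a~priori moment and H\"older bounds on $X$ recorded after (A5) --- produces the spatial term $(\inf_{i\in\mathcal{I}\setminus\mathcal{I}_N}\lambda_i)^{-\gamma}$ via $\|(-A)^{-\gamma}(\Id-P_N)\|_{L(H)}=(\inf_{i\in\mathcal{I}\setminus\mathcal{I}_N}\lambda_i)^{-\gamma}$, and the noise term $(\sup_{j\in\mathcal{J}\setminus\mathcal{J}_K}\eta_j)^{\alpha}$ from the truncation of the $Q$-Wiener process.

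It then remains to estimate the temporal error between the auxiliary process and the $\DFM$ scheme. Over a single step I would expand the diffusion integral $\int_{t_m}^{t_{m+1}} e^{A(t_{m+1}-s)}B(X_s)\,\mathrm{d}W_s^K$ by applying the It\^o formula to $s\mapsto B(X_s)$; the leading term reproduces $e^{Ah}B(Y_m)\Delta W_m^{K,M}$, while the first correction is the Milstein double-integral term $\sum_{i,j\in\mathcal{J}_K} B'(Y_m)\big(P_N B(Y_m)\tilde{e}_i\big)\tilde{e}_j\, I^Q_{(i,j),m}$, and all further remainders are shown to be of order $h$ in the mean-square sense. The drift integral is compared with $e^{Ah}\,hF(Y_m)$, where the time-H\"older regularity of $F(X_\cdot)$ combined with the parabolic smoothing of the semigroup yields the exponent; balancing these drift and diffusion contributions is what produces $\qd=\min(2(\gamma-\beta),\gamma)$.

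The decisive and, I expect, hardest step is to show that the derivative-free difference operator in \eqref{DFM} matches the Milstein correction up to a negligible remainder. Writing $v_j = \sum_{i\in\mathcal{J}_K} P_N B(Y_m)\tilde{e}_i\, I^Q_{(i,j),m}$ and applying a second-order Taylor expansion of $B$,
\[
B(Y_m+v_j)-B(Y_m) = B'(Y_m)v_j + \tfrac{1}{2}\,B''(\zeta_j)(v_j,v_j),
\]
the first-order term, summed against $\tilde{e}_j$, reproduces exactly the projected Milstein term, whereas the quadratic remainder $\tfrac{1}{2}\sum_{j}B''(\zeta_j)(v_j,v_j)\tilde{e}_j$ is the genuine approximation error of the scheme. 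Since each iterated integral satisfies $\mathrm{E}[|I^Q_{(i,j),m}|^2]=\mathcal{O}(h^2)$, the term $v_j\otimes v_j$ contributes $\mathcal{O}(h^2)$ per step; the delicate point is to sum this remainder over the noise indices $i,j$ without losing the temporal order, distinguishing the diagonal ($i=j$) from the off-diagonal ($i\neq j$) iterated integrals and using assumption (A5) --- either the trace-class property of $Q^{\frac{1}{2}}$ in \eqref{A5a} or the growth bound on $B''(v)(P B(u),P B(u))$ in \eqref{A5c} --- to control the summation.

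Once the one-step local error is bounded, I would insert it into a recursion for $\mathrm{E}[\|X_{t_m}-Y_m^{N,K,M}\|_H^2]$, exploiting the contractivity of $P_N e^{Ah}$ and the global Lipschitz and linear-growth bounds from (A2)--(A3), and close the argument with a discrete Gronwall inequality, obtaining the three claimed contributions uniformly in $N,K,M$. The principal obstacle throughout is the interplay in the third step between the summation over the possibly infinite noise index set and the sharp exponent $\qd$: one must ensure that the derivative-free modification, unlike the exact Milstein term, does not introduce spurious lower-order contributions, which is precisely where assumption (A5) is indispensable and where the present argument departs from the commutative-noise analysis.
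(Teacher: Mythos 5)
Your proposal follows essentially the same route as the paper: the error is split into the Jentzen--R\"ockner Milstein error (which already carries the spatial, noise-truncation and temporal terms) plus the discrepancy between the derivative-free difference operator and the exact Milstein correction, and the latter is handled exactly as you describe --- a Taylor expansion of $B$ around $Y_m$ in which the first-order terms cancel and the $B''$-remainder is shown to be negligible using the distributional properties of the iterated integrals together with assumption (A5), before closing with a discrete Gronwall argument. The only ingredient your sketch leaves implicit is that controlling the $B''$-remainder requires uniform moment bounds on the numerical iterates $Y_m$ themselves in $H_\delta$ (not just on $X$), which the paper establishes in a separate lemma whose proof again splits according to the two cases of (A5); this is a technical prerequisite rather than a conceptual departure.
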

\begin{proof}
  The proof of Theorem~\ref{Error:DFM} is stated in Section~\ref{Proof}.
\end{proof}
This is the same estimate (apart from the constant) as for the Milstein scheme 
\eqref{Milstein} proposed in~\cite{MR3320928} or the derivative-free Milstein 
type scheme for SPDEs with commutative noise in~\cite{MR3842926}. 
The computational effort, however, increases compared 
to the schemes for SPDEs with commutative noise
as the iterated stochastic integrals have to be simulated. We 
discuss this issue below.
\subsection{Approximation of Iterated Integrals}
\label{SubSec:Approx-Iterated-Integrals}
In Section~\ref{Sec:DFM}, we implicitly assumed that the iterated stochastic integrals
can be computed exactly. However, up to now there exists no algorithm for the 
exact simulation of the iterated stochastic integrals in a setting with
non-commutative noise. Therefore, the iterated integrals have
to be approximated appropriately.
We prove the following general result.
\begin{thm}\label{Thm:ErrorTotal}
  Let $\bar{I}^Q_{(i,j),m}$, $i,j\in\mathcal{J}_K$, $m \in \{0, \ldots, M-1\}$,
  denote some approximations of the iterated stochastic integrals
  in~\eqref{DoubleInt} and let $(\bar{\Y}_m)_{m \in \{0,\ldots,M\}}$ 
  with $\bar{\Y}_m = \bar{\Y}_m^{N,K,M}$ denote
  the discrete time process obtained by the
  $\DFM$ scheme~\eqref{DFM} 
  if the integrals
  $I^Q_{(i,j),m}$ are replaced by the approximations 
  $\bar{I}^Q_{(i,j),m}$, $i,j\in\mathcal{J}_K$, $m \in \{0,\ldots,M-1\}$. 
  Assume that conditions (A1)--(A5) are fulfilled and that
  \begin{align}\label{Cond1:DI}
    &\bigg( \mathrm{E}\bigg[
    \Big\|\int_{t_l}^{t_{l+1}}
    B'(\bar{\Y}_l)\Big(\int_{t_l}^s P_N B(\bar{\Y}_l) \, \mathrm{d}W_r^K\Big) \, \mathrm{d}W^K_s
    - \sum_{i,j\in\mathcal{J}_K} \bar{I}_{(i,j),l}^Q B'(\bar{\Y}_l) (P_N B(\bar{\Y}_l) \tilde{e}_i,
    \tilde{e}_j ) \Big\|_H^2 \bigg] \bigg)^{\frac{1}{2}}
    \leq \mathcal{E}(M,K)
  \end{align}
  for all $l \in \{0,\ldots,M-1\}$, $K,M \in \mathbb{N}$ and some function 
  $\mathcal{E} \colon \mathbb{N} \times \mathbb{N} \to \mathbb{R}_+$.
  Further, in case of assumption (A5\ref{A5a}) assume that 
  \begin{equation} \label{Cond2:DI-A5a}
    \sum_{j\in\mathcal{J}}
    \bigg( \mathrm{E} \bigg[ \Big( \sum_{i\in\mathcal{J}}  
    \big(\bar{I}_{(i,j),t,t+h}^Q \big)^2 \Big)^{2} \bigg] \bigg)^{\frac{1}{4}}
    \leq C_Q h
  \end{equation}
  and in case of assumption (A5\ref{A5c}) assume that
  \begin{equation} \label{Cond2:DI-A5c}
    \sum_{j \in \mathcal{J}} \bigg( 
    \mathrm{E} \bigg[ \Big( \sum_{i \in \mathcal{J}} \big( \bar{I}_{(i,j),t,t+h}^Q
    \big)^2 \Big)^q \bigg] \bigg)^{\frac{1}{2}}
    \leq C_Q h^q
  \end{equation}
  for $q\in\{2,3\}$, some $C_Q>0$, all $h >0$ and $t \in [0,T-h]$. 
  Then, there exists a constant $C_{Q,T} \in (0,\infty)$, independent of $N$, $K$ and $M$, 
  such that it holds
  \begin{equation*}
    \max_{0 \leq m \leq M}
    \Big( \mathrm{E}\Big[ \big\| X_{t_m} - \bar{\Y}_m \big\|_H^2
    \Big] \Big)^{\frac{1}{2}}
    \leq C_{Q,T} \Big( \Big( \inf_{i \in \mathcal{I} \setminus
    \mathcal{I}_N} \lambda_i \Big)^{-\gamma}
    + \Big( \sup_{j \in \mathcal{J} \setminus \mathcal{J}_K}
    \eta_j \Big)^{\alpha} + M^{-\qd}
    + M^{\frac{1}{2}} 
    \, \mathcal{E}(M,K) 
    \Big)
  \end{equation*}
  for 
  all $N,K,M \in \mathbb{N}$
  and with $\qd=\min(2(\gamma-\beta),\gamma)$.
\end{thm}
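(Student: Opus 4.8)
The plan is to re-run the proof of Theorem~\ref{Error:DFM} with the perturbed process $(\bar{\Y}_m)$ in place of $(\Y_m^{N,K,M})$, isolating the single additional source of error, namely the replacement of the exact iterated integrals $I^Q_{(i,j),m}$ by the approximations $\bar{I}_{(i,j),m}^Q$. The starting observation is the exact identity
\begin{equation*}
  \int_{t_l}^{t_{l+1}} B'(\bar{\Y}_l)\Big(\int_{t_l}^s P_N B(\bar{\Y}_l)\,\mathrm{d}W_r^K\Big)\,\mathrm{d}W^K_s
  = \sum_{i,j\in\mathcal{J}_K} I^Q_{(i,j),l}\, B'(\bar{\Y}_l)(P_N B(\bar{\Y}_l)\tilde{e}_i,\tilde{e}_j),
\end{equation*}
which holds because the continuous double integral of the frozen integrands over $[t_l,t_{l+1}]$ reduces, mode by mode, exactly to the iterated It\^o integrals in \eqref{DoubleInt}. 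Consequently, condition \eqref{Cond1:DI} measures precisely the extra $L^2$-discrepancy $\mathcal{E}(M,K)$ introduced per time step when $I^Q_{(i,j),l}$ is swapped for $\bar{I}_{(i,j),l}^Q$; with the exact integrals one would have $\mathcal{E}\equiv 0$ and recover Theorem~\ref{Error:DFM}.

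First I would establish uniform-in-$N,K,M$ moment bounds for $(\bar{\Y}_m)$, which are needed so that the global Lipschitz and boundedness properties of $F$, $B$, $B'$, $B''$ can be applied along the perturbed trajectory. Expanding the difference operator in \eqref{DFM} by Taylor's theorem writes it as the first-order term $\sum_{i,j\in\mathcal{J}_K}\bar{I}_{(i,j),m}^Q B'(\bar{\Y}_m)(P_N B(\bar{\Y}_m)\tilde{e}_i,\tilde{e}_j)$ plus a quadratic remainder involving $B''$ and the products $\bar{I}_{(i,j),m}^Q\bar{I}_{(i',j),m}^Q$. Bounding the moments of these contributions is exactly what \eqref{Cond2:DI-A5a} (under assumption (A5\ref{A5a})) and \eqref{Cond2:DI-A5c} (under assumption (A5\ref{A5c})) are designed for: the quadratic-variation quantities $\sum_{i}(\bar{I}_{(i,j),m}^Q)^2$ summed over $j$ control the $H$-norm of the remainder, and the two different moment exponents mirror the two-case split already present in the proof of Theorem~\ref{Error:DFM}, the trace-class case needing fewer moments and the growth-bound case on $B''$ requiring the higher orders $q\in\{2,3\}$.

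Next I would decompose $X_{t_m}-\bar{\Y}_m$ following the scheme of the proof of Theorem~\ref{Error:DFM}, representing $X_{t_m}$ through the mild solution and comparing term by term against the recursion for $\bar{\Y}_m$. Every contribution is handled verbatim as in that proof except at the step where the continuous iterated integral in the mild expansion is matched with the first-order part of the difference operator: there the identity above together with \eqref{Cond1:DI} produces a per-step residual bounded in $L^2$ by $\mathcal{E}(M,K)$, while the Taylor remainder is absorbed into the higher-order-in-$h$ terms via the moment bounds from the previous step. Since the iterated It\^o integrals $I^Q_{(i,j),l}$ are conditionally centered given $\mathcal{F}_{t_l}$, and the approximations are taken so that the residual inherits this centering, the residuals behave as discrete-martingale increments after multiplication by the contracting factors $P_N e^{A(t_m-t_{l+1})}$; a discrete It\^o-isometry (orthogonality) estimate then bounds their accumulated $L^2$-contribution by $\big(\sum_{l=0}^{m-1}\mathcal{E}(M,K)^2\big)^{1/2}\leq M^{1/2}\mathcal{E}(M,K)$, rather than the crude $M\,\mathcal{E}(M,K)$ that a pathwise triangle inequality would give. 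A discrete Gronwall inequality finally absorbs the state-dependent Lipschitz terms, and collecting everything yields the three terms inherited from Theorem~\ref{Error:DFM} together with the new summand $M^{1/2}\mathcal{E}(M,K)$ and the exponent $\qd$.

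The main obstacle I anticipate is precisely extracting the $M^{1/2}$ (as opposed to $M$) scaling of the accumulated integral-approximation error. This requires that the local residual be conditionally centered, or that any non-centered part be of strictly higher order in $h$ per step, so that the orthogonality of the summands can be exploited; since \eqref{Cond1:DI} is phrased only as an $L^2$ bound on the full residual, one must verify that the relevant conditional-mean component is either zero or dominated, using the martingale structure of the iterated It\^o integrals and the unbiasedness built into the approximation. A secondary difficulty is bookkeeping: one must carry the moment bounds and all the Lipschitz constants uniformly in $N$, $K$ and $M$ throughout the Gronwall iteration so that the final constant $C_{Q,T}$ is genuinely independent of these parameters.
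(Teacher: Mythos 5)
Your proposal follows essentially the same route as the paper: uniform moment bounds for the perturbed process $(\bar{\Y}_m)$ (the paper's Remark~\ref{Rem:MomentYBar}), a splitting that isolates the per-step residual from replacing $I^Q_{(i,j),l}$ by $\bar{I}^Q_{(i,j),l}$, a first-order Taylor expansion whose $B''$-remainder is controlled by \eqref{Cond2:DI-A5a} or \eqref{Cond2:DI-A5c} in the same two-case split, orthogonality of the conditionally centered residuals to obtain the $M^{1/2}\mathcal{E}(M,K)$ accumulation, and a discrete Gronwall argument. Your explicit flagging that the $M^{1/2}$ scaling hinges on the conditional centering of the residual (which \eqref{Cond1:DI} alone does not guarantee) is a point the paper leaves implicit, but the overall argument is the same.
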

\begin{proof}
 The proof of this theorem is stated in Section~\ref{Proof}.
\end{proof}
Note that Theorem~\ref{Thm:ErrorTotal} applies to the Milstein
scheme \eqref{Milstein} as well, see also \cite{MCQMC2018}.
Now, we want to illustrate this statement with two exemplary choices -- 
Algorithm~1 and Algorithm~2 as introduced in~\cite{MR3949104}. 
First, we consider Algorithm~1 which is based on a series representation 
of the iterated stochastic integral. This representation is 
truncated after $D$ summands for some 
$D\in\mathbb{N}$, see~\cite{MR1178485,MR3949104}, which yields the approximation.
The numerical scheme~\eqref{DFM} is called $\DFMA$ if the iterated 
integrals are approximated by Algorithm~1 -- denoted 
as $\bar{I}^{Q}_{(i,j),m}=\bar{I}^{Q,(D),(1)}_{(i,j),m}$. 
For this method, there exists some constant $C_{Q,T}>0$ such that 
\eqref{Cond1:DI} is fulfilled with
\begin{align} \label{Iter-Int-error-Alg1}
  \mathcal{E}(M,K) = \mathcal{E}^{(D),(1)}(M,K) = C_{Q,T} \frac{1}{M \, \sqrt{D}}
\end{align}
for all 
$D,K,M\in\mathbb{N}$, see \cite[Corollary~1]{MR3949104}.
If we approximate the integrals with Algorithm~2 instead, 
we denote the scheme~\eqref{DFM} by $\DFMB$ and 
the approximation $\bar{I}^{Q}_{(i,j),m}$ of $I^Q_{(i,j),m}$ by
$\bar{I}^{Q,(D),(2)}_{(i,j),m}$. The series representation is not 
only truncated after $D$ summands, but the remainder is
approximated by a multivariate normally distributed random
vector, for details, we refer to~\cite{MR3949104,MR1843055}.
For this algorithm, \eqref{Cond1:DI} holds with
\begin{align}  \label{Iter-Int-error-Alg2}
  \mathcal{E}(M,K) = \mathcal{E}^{(D),(2)}(M,K) = C_{Q,T} 
  \frac{\min \Big(  K \sqrt{K-1}, (\min_{j\in\mathcal{J}_K} \eta_j)^{-1} \Big)}{M \, D}
\end{align}
for all 
$D, K, M \in\mathbb{N}$ and some constant $C_{Q,T}>0$, see \cite[Corollary~2, Theorem~4]{MR3949104}.
This estimate shows that the error converges in $D$ with a higher order, compared to the 
estimate for Algorithm~1. Note that the error estimate also depends on the number $K$, which
controls the accuracy of the approximation of the $Q$-Wiener process, and on the eigenvalues of the operator $Q$.
For a proof of the error estimates \eqref{Iter-Int-error-Alg1} and \eqref{Iter-Int-error-Alg2}, 
we refer to~\cite{MR3949104}.
Moreover, conditions~\eqref{Cond2:DI-A5a} and \eqref{Cond2:DI-A5c} are 
fulfilled for Algorithm~1 and~2,
which can be easily seen from the definition of the algorithms in \cite{MR3949104}. \\ \\
In order to determine which of the two algorithms obtains a higher
order of convergence, one has to analyze the computational costs that 
are involved, see also~\cite{MR3949104,MCQMC2018} for a comparison. The goal 
is that the $\DFM$ scheme combined with Algorithm~1 or Algorithm~2 preserves the 
error estimate stated in Theorem~\ref{Error:DFM}. 
This requires a choice of $D \geq D_{1} = \lceil M^{2\min(2(\gamma-\beta),\gamma)-1} \rceil$
for Algorithm~1, whereas for Algorithm~2 we need  
$D \geq D_2 = \lceil \min \big(  K \sqrt{K-1}, (\min_{j\in\mathcal{J}_K} \eta_j)^{-1} \big)
M^{\min(2(\gamma-\beta),\gamma)-\frac{1}{2}} \rceil$.
Alternatively, one can choose
$D \geq D_{1} = \lceil M^{-1} (\sup_{j \in \mathcal{J} \setminus \mathcal{J}_K}\eta_j )^{-2\alpha} \rceil$
for the first algorithm and 
$D \geq D_{2} = \lceil M^{-\frac{1}{2}}
\min \big(  K \sqrt{K-1}, (\min_{j\in\mathcal{J}_K} \eta_j)^{-1} \big)
( \sup_{j \in \mathcal{J} \setminus \mathcal{J}_K}\eta_j )^{-\alpha} \rceil$
for the second algorithm. 
However, if all summands of the error estimate
in Theorem~\ref{Error:DFM} are optimally balanced, then 
$( \sup_{j \in \mathcal{J} \setminus \mathcal{J}_K} \eta_j )^{\alpha} 
= \mathcal{O}( M^{-\min(2(\gamma-\beta),\gamma)} )$
which results in the same orders of magnitude for the choice of $D_1$ and $D_2$, respectively.
These considerations show that the computational effort for the two schemes $\DFMA$ and $\DFMB$ is 
determined by the parameters which in turn are specified by the equation. 
Therefore, the choice of the optimal scheme depends 
on the SPDE that has to be solved. 
%\\ \\
%
%
From now on, we assume that $D\in\mathbb{N}$ is chosen such that 
the temporal order of convergence is not decreased, i.e., such that
$D=D_1$ for Algorithm~1 or $D=D_2$ for Algorithm~2, respectively.
\begin{remark}
  Note that Algorithm~1 and Algorithm~2 proposed in \cite{MR3949104}
  merely represent examples and that Theorem~\ref{Thm:ErrorTotal}
  is valid if the
  derivative-free Milstein type scheme $\DFM$ is combined 
  with any approximation for the iterated
  stochastic integrals such that conditions~\eqref{Cond1:DI} 
  together with \eqref{Cond2:DI-A5a} or \eqref{Cond2:DI-A5c} are fulfilled.
\end{remark}
\section{The Effective Order of Convergence -- A Comparison}\label{Sec:EffOrder}
In the following, we compare the performance of the derivative-free Milstein type ($\DFM$)
scheme to the performance of the original Milstein ($\MIL$) scheme \eqref{Milstein}, 
the exponential Euler ($\EES$) scheme and the linear implicit Euler ($\LIE$) scheme. 
For example, one can combine the $\DFM$ scheme and the $\MIL$ scheme 
with Algorithm~1 or Algorithm~2 in order to approximate the solution of SPDEs that need not fulfill the
commutativity condition \eqref{Comm}. However, the analysis can be done similarly
for any other approximation method for the iterated stochastic integrals
as specified in Theorem~\ref{Thm:ErrorTotal}. 
In the following, we restrict our analysis to Algorithm~1 as an example.
The $\LIE$ scheme is considered in \cite{MR1825100,MR2136207} and the
$\EES$ scheme is introduced in~\cite{MR3047942} which are combined with a 
Galerkin approximation. The proof of the following theorem is detailed in~\cite{Diss} 
and the main idea can be found in~\cite{MR3320928}.
\begin{prop}[Convergence of $\EES$ scheme] \label{Error:EES}
    Assume that (A1)--(A4) hold. Then, there exists a
    constant $C_{Q,T} \in (0,\infty)$, independent of $N$, $K$ and $M$, such
    that for the approximation process $(Y_m^{\EES})_{0 \leq m \leq M}$
    with $Y_m^{\EES}=Y_m^{\EES; N,K,M}$, 
    defined by the $\EES$ scheme, it holds
    \begin{equation} \label{Thm:Error-EES-Estimate:eqn}
      \max_{0 \leq m \leq M}
      \Big( \mathrm{E}\Big[ \big\| X_{t_m} - Y_m^{\EES} \big\|_H^2
      \Big] \Big)^{\frac{1}{2}} 
      \leq C_{Q,T} \Big( \Big( \inf_{i \in \mathcal{I} \setminus
      \mathcal{I}_N} \lambda_i \Big)^{-\gamma}
      + \Big( \sup_{j \in \mathcal{J} \setminus \mathcal{J}_K}
      \eta_j \Big)^{\alpha} + 
      M^{-q_{\EES}}
      \Big)
    \end{equation}
    for 
    all $N,K,M \in \mathbb{N}$ 
    and with $q_{\EES} = \min(\frac{1}{2},2(\gamma-\beta),\gamma)$. 
    The parameters are determined by assumptions (A1)--(A4).
\end{prop}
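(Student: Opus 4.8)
The plan is to adapt the mild-solution comparison underlying Theorem~\ref{Error:DFM} (and \cite{MR3320928}), using that the $\EES$ scheme coincides with the $\DFM$ scheme~\eqref{DFM} after deleting the difference term $\sum_{j\in\mathcal{J}_K}(\cdots)$, equivalently with the Milstein scheme~\eqref{Milstein} after deleting the iterated-integral correction. Unrolling the recursion $Y_{m}^{\EES}=P_N e^{Ah}(Y_{m-1}^{\EES}+hF(Y_{m-1}^{\EES})+B(Y_{m-1}^{\EES})\Delta\WW_{m-1})$ and using that $P_N$ commutes with $e^{Ah}$ yields the discrete variation-of-constants representation $Y_m^{\EES}=e^{At_m}P_N\xi+\sum_{l=0}^{m-1}\int_{t_l}^{t_{l+1}}e^{A(t_m-t_l)}P_N(F(Y_l^{\EES})\,\mathrm{d}s+B(Y_l^{\EES})\,\mathrm{d}W_s^K)$, which I would compare term by term against the mild solution. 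First I would split the total error by the triangle inequality into a spatial Galerkin error $X_{t_m}-P_N X_{t_m}$, a noise-truncation error ($W$ versus $W^K$), and a temporal error $P_N X_{t_m}-Y_m^{\EES}$ with the noise already truncated.

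For the spatial part I would use $\|X_{t_m}-P_N X_{t_m}\|_H\le(\inf_{i\in\mathcal{I}\setminus\mathcal{I}_N}\lambda_i)^{-\gamma}\|X_{t_m}\|_{H_\gamma}$, a direct consequence of the spectral representation of $-A$ in (A1), together with $\sup_{t}\mathrm{E}[\|X_t\|_{H_\gamma}^4]<\infty$ from the framework; this produces the first summand. For the noise-truncation error I would compare the processes driven by $W$ and by $W^K$, apply the It\^o isometry on each subinterval, and invoke the third bound of (A3), namely $\|(-A)^{-\vartheta}B(v)Q^{-\alpha}\|_{L_{HS}(U_0,H)}\le C(1+\|v\|_{H_\gamma})$, to convert the discarded eigenvalues into the factor $(\sup_{j\in\mathcal{J}\setminus\mathcal{J}_K}\eta_j)^\alpha$.

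The core is the temporal error, for which I would subtract the two discrete mild representations and decompose the difference into a drift part and a diffusion part, each splitting further into a semigroup-freezing contribution $e^{A(t_m-t_l)}-e^{A(t_m-s)}$, a coefficient-freezing contribution ($F(X_s)-F(X_{t_l})$, respectively $B(X_s)-B(X_{t_l})$), and a propagated-error contribution ($F(X_{t_l})-F(Y_l^{\EES})$, respectively $B(X_{t_l})-B(Y_l^{\EES})$). The deterministic pieces are controlled through the analytic smoothing bound $\|(-A)^\rho e^{At}\|_{L(H)}\le C\,t^{-\rho}$ from (A1), the H\"older regularity $(\mathrm{E}[\|X_t-X_s\|_{H_r}^p])^{1/p}\lesssim|t-s|^{\min(\gamma-r,1/2)}$ from the framework, and the growth and Lipschitz bounds of (A2)--(A3); the propagated-error pieces are absorbed into a discrete Gronwall inequality. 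As in the proof of Theorem~\ref{Error:DFM}, the exponents $\gamma$ and $2(\gamma-\beta)$ emerge here from the initial-data smoothing $(e^{A(s-t_l)}-\Id)X_{t_l}$ and from the drift term with $F\colon H_\beta\to H$, respectively.

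The main obstacle --- and the sole structural difference from Theorem~\ref{Error:DFM} --- is the uncompensated diffusion-freezing stochastic integral $\sum_{l}\int_{t_l}^{t_{l+1}}e^{A(t_m-s)}P_N(B(X_s)-B(X_{t_l}))\,\mathrm{d}W_s^K$. In the $\DFM$ and Milstein schemes its leading part is cancelled by the iterated-integral term, but here it survives, and the It\^o isometry leaves $\sum_l\int_{t_l}^{t_{l+1}}\mathrm{E}\|B(X_s)-B(X_{t_l})\|_{L_{HS}(U_0,H)}^2\,\mathrm{d}s$, which by the temporal regularity of $X$ is of order $h^{2\min(\gamma,1/2)}$. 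This is exactly what imposes the extra $\tfrac{1}{2}$ in $q_{\EES}=\min(\tfrac{1}{2},2(\gamma-\beta),\gamma)$ that is absent from $\qd$. Combining this $\tfrac{1}{2}$-cap with the $\gamma$- and $2(\gamma-\beta)$-contributions above and closing the discrete Gronwall argument would then complete the estimate.
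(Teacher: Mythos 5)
Your outline is correct and follows essentially the same route as the paper, which does not prove Proposition~\ref{Error:EES} in-text but defers to \cite{Diss} and to the Jentzen--R\"ockner analysis in \cite{MR3320928}: the standard splitting into Galerkin, noise-truncation and temporal errors, with the uncompensated diffusion-freezing integral $\sum_l\int_{t_l}^{t_{l+1}}e^{A(t_m-s)}P_N(B(X_s)-B(X_{t_l}))\,\mathrm{d}W_s^K$ correctly identified as the source of the $\tfrac12$-cap in $q_{\EES}$. No gaps worth flagging at the level of detail the paper itself provides.
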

Compared to the $\DFM$ scheme,
the $\EES$ scheme requires less restrictive assumptions as we do not need 
(A5) or conditions on the second derivative of $B$ and the estimate for
$B'(v) P B(v)-B'(w) P B(w)$ can be omitted.
For the LIE scheme, similar results as in Proposition~\ref{Error:EES} 
can be obtained analogously.
Below, $q$ stands for the order of convergence w.r.t.\ 
the step size $h=\frac{T}{M}$ with $\qd = \qm
= \min(2(\gamma-\beta),\gamma) \geq \min(\frac{1}{2},2(\gamma-\beta),\gamma)
= \qee$.
However, to compare the performance of the
schemes we have to take into account
their computational cost in combination with their error estimates as, e.g., 
iterated stochastic integrals have to be simulated 
for the higher order schemes $\DFM$ and $\MIL$ only.
\subsection{The Cost Model}
In order to compare the efficiency of different approximation algorithms, one 
is usually interested in the dependency of the errors on their computational 
cost. Therefore, we consider a theoretical cost model proposed 
in \cite{MR3842926}. It is assumed that any standard arithmetic operation 
or evaluation of sine, cosine or exponential function etc.\ produces unit cost $1$.
Further, the simulation of any realization of an $N(0,1)$-distributed
real valued random variable is assumed to produce cost one as well.
However, the evaluation $\phi(v)$ of a functional $\phi \in V^*$
with $V=H$ or $V=U$ is assumed to be usually more costly with $\costs(\phi,v) 
= \costs(\phi) \equiv c$
for all $v \in V$ and for some $c \geq 1$ where typically $c \gg 1$. Such
functionals are needed for, e.g.,  the calculation of Fourier coefficients 
$\phi_i(v) = \langle v, \hat{e}_i \rangle_V$ of $v \in V$ for some ONB 
$\{\hat{e}_i\}_{i \in \mathbb{N}}$ of $V$.
Let $L(H,E)_{N} = \{ T\arrowvert_{H_N} : \ T \in L(H,E)\}$ for some
vector space $E$ and let
$L_{HS}(U,H)_{K,N} = \{ P_N T\arrowvert_{U_K} : \ T \in
L_{HS}(U,H)\}$.
As a result, we obtain for any $v,y \in H_N$ and $u \in U_K$ 
the following computational costs due to $|\mathcal{I}_N|=N$ and 
$|\mathcal{J}_K|=K$ \cite{MR3842926}:
\begin{enumerate}[i)]
\item
One evaluation of the mapping $P_N \circ F \colon H \to H_N$ with 
\[ 
    P_N F(y) = \sum_{i \in \mathcal{I}_N} \langle F(y), e_i \rangle_H \, e_i
\]
is determined by the functionals $\langle F(y), e_i \rangle_H$ for $i
\in \mathcal{I}_N$ which results in $\costs(P_N F) = \Oo( N )$.
\item
Evaluating $P_N \circ
B(\cdot)\arrowvert_{U_K} \colon H \to L_{HS}(U,H)_{K,N}$
with 
\[ 
    P_N B(y)u = \sum_{i \in \mathcal{I}_N} \sum_{j \in
    \mathcal{J}_K} \langle B(y) \tilde{e}_j, e_i \rangle_H \, \langle u,
    \tilde{e}_j \rangle_U \, e_i
\]
needs the
evaluation of the functionals $\langle B(y) \tilde{e}_j, e_i
\rangle_H$ for $i \in \mathcal{I}_N$ and $j \in \mathcal{J}_K$
which results in $\costs(P_N \circ B(y)\arrowvert_{U_K}) = \Oo( N K )$.
\item
For $P_N \circ
B'(\cdot)(\cdot,\cdot)\arrowvert_{H_N,U_K} \colon H \to
L(H,L_{HS}(U,H)_{K,N})_N$ with
\[
    P_N \big( (B'(y)v)u \big) = \sum_{k,l \in
    \mathcal{I}_N} \sum_{j \in \mathcal{J}_K} \langle (B'(y) e_k)
    \tilde{e}_j, e_l \rangle_H \, \langle v, e_k \rangle_H \, \langle u,
    \tilde{e}_j \rangle_U \, e_l
\]
the functionals $\langle (B'(y) e_k) \tilde{e}_j, e_l \rangle_H$ have to
be evaluated for all $k,l \in \mathcal{I}_N$ and $j \in \mathcal{J}_K$
and it follows that $\costs(P_N \circ
B'(y)(\cdot,\cdot)\arrowvert_{H_N,U_K}) = \Oo( N^2 K )$.
\end{enumerate}
Considering the computational cost for one time step 
of the Milstein scheme \eqref{Milstein}, one evaluation of 
$P_N \circ F(\cdot)$, one of $P_N \circ B(\cdot)\arrowvert_{U_K}$, 
and one evaluation of $P_N \circ B'(\cdot)\arrowvert_{H_N,U_K}$ are 
needed. Then, the evaluated 
operators $P_N \circ B(\YY_m) \arrowvert_{U_K} \in L(U,H)_{K,N}$,
$P_N \circ B'(\YY_m)\arrowvert_{H_N,U_K} \in L(H,L_{HS}(U,H)_{K,N})_N$,
$P_N \circ B'(\YY_m)(v)\arrowvert_{U_K} \in L_{HS}(U,H)_{K,N}$ and 
$P_N \circ e^{Ah} \in L(H,H)_{N,N}$ have to be applied to the 
corresponding elements of the Hilbert spaces. 
Here, it has to be pointed out that calculating the Fourier coefficients 
of $P_N B(\YY_m) \tilde{e}_j$ for some basis element $\tilde{e}_j \in U_K$ is
for free because they are in the $j$-th column of the matrix
representation $P_N B(\YY_m)\arrowvert_{U_K} = \big(b_{i,j}(\YY_m)
\big)_{i \in \mathcal{I}_N, j \in \mathcal{J}_K}$ with
$b_{i,j}(\YY_m)=\langle B(\YY_m) \tilde{e}_j, e_i \rangle_H$ which
are already determined. The same applies to the operator 
$P_N \circ B'(\YY_m)(v)\arrowvert_{U_K} \in L_{HS}(U,H)_{K,N}$ if 
it is applied to some basis element $\tilde{e}_j \in U_K$.
Thus, the computational cost for $M$ time steps of the Milstein scheme
is $\costs(\MIL(N,K,M)) = \Oo(N^2 K M)$ if the cost for the simulation of
iterated stochastic integrals is not taken into account.
\\ \\
In contrast to the Milstein scheme, in each time step the proposed 
derivative-free Milstein type scheme $\DFM$ needs one evaluation of $P_N \circ F(\cdot)$, 
one evaluation of $P_N \circ B(\cdot)\arrowvert_{U_K}$, and the calculation of
\begin{equation} \label{DFM-comput-costs-eqn01}
  \sum_{j \in \mathcal{J}_K} P_N B \Big( Y_m^{N,K,M} + \sum_{i\in\mathcal{J}_K} P_N B(Y_m^{N,K,M}) 
  \tilde{e}_i \, I^Q_{(i,j),m} \Big) \tilde{e}_j .
\end{equation}
Observe that the calculation of each summand 
requires the computation of the functionals
\begin{equation*}
  \phi_k^j = \big\langle B \big( Y_m^{N,K,M} + \sum_{i\in\mathcal{J}_K} P_N B(Y_m^{N,K,M}) 
  \tilde{e}_i \, I^Q_{(i,j),m} \big) \tilde{e}_j, e_k \big\rangle_H
\end{equation*}
for $k \in \mathcal{I}_N$ with $\costs(\{\phi_k^j : k \in \mathcal{I}_N \})=c N$ 
for each $j \in \mathcal{J}_K$ due to $|\mathcal{I}_N|=N$.
Therefore, the evaluation of \eqref{DFM-comput-costs-eqn01} can be
done with cost $\Oo( N K )$. Here, the crucial point is that although the argument
of $B$ depends on the index $j$, the resulting operator is then applied to
the basis function $\tilde{e}_j$ only, which is responsible for the fundamental reduction 
of the computational complexity.
In addition, the linear operators $P_N \circ e^{Ah} \arrowvert_{H_N}
\in L(H,H)_{N,N}$ and $P_N \circ B(\YY_m)\arrowvert_{U_K} \in
L(U,H)_{K,N}$ (note again that calculating, e.g.,
$P_N B(\YY_m) \tilde{e}_j$ for a
basis $\tilde{e}_j \in U_K$ is for free) have to be applied to the 
corresponding elements of the Hilbert spaces. 
Thus, the total computational cost for $M$ time
steps of the $\DFM$ scheme is $\costs(\DFM(N,K,M))=\Oo(N K M)$
if the cost for the simulation of
iterated stochastic integrals is not taken into account.
\\ \\
\begin{table}[tbp] %[H]
\begin{small}
\begin{center}
  \renewcommand{\arraystretch}{1.4}
  \begin{tabular}{|l|c|c|c|c|}
    \hline
    & \multicolumn{3}{c|}{\# of evaluations of functionals} & 
    \\
    Scheme & $\ \ \ P_N F(\cdot)\arrowvert_{H_N} \ \ \ \ $ &
    $\ \ \ P_N B(\cdot)\arrowvert_{U_K} \ \ \ $ &
    $ \ P_N B'(\cdot)\arrowvert_{H_N,U_K} \ $ & \# of $N(0,1)$  r.\ v.\  
    \\
    \hline \hline
    $\LIE$ & $N$ & $KN$ & $-$ & $K$ 
    \\ \hline
    $\EES$ & $N$ & $KN$ & $-$ & $K$ 
    \\ \hline
    $\MILA$ & $N$ & $KN$ & $KN^2$ & $K(1+2 D_1)$ 
    \\ \hline
    $\MILB$ & $N$ & $KN$ & $KN^2$ & $K(1+2 D_2)+\frac{1}{2} K (K-1)$ 
    \\ \hline
    $\DFMA$ & $N$ & $2KN$ & $-$ & $K(1+2 D_1)$ 
    \\ \hline
    $\DFMB$ & $N$ & $2KN$ & $-$ & $K(1+2 D_2)+\frac{1}{2} K (K-1)$ 
    \\ \hline
  \end{tabular}
  \renewcommand{\arraystretch}{1.0}
\end{center}
\end{small}
\caption{Computational cost given by the number of evaluations of real-valued functionals and
independent $N(0,1)$-distributed random variables needed for each time step.}
\label{funcEval-1}
\end{table}%
Analogously to the derivative-free Milstein type schemes $\DFMA$ and $\DFMB$,
we denote the Milstein scheme by $\MILA$ and $\MILB$ if it is combined with
either Algorithm~1 or Algorithm~2 proposed in \cite{MR3949104} for 
the approximation of the iterated stochastic integrals, respectively.
Then, the dominating computational cost due to necessary evaluations 
of real-valued functionals and the simulation of random numbers 
for each time step can be found in Table~\ref{funcEval-1} for the 
linear implicit Euler scheme $\LIE$ as well as for the $\EES$, 
$\MILA$, $\MILB$, $\DFMA$ and $\DFMB$ schemes. 
It has to be pointed out that, in contrast to finite-dimensional 
stochastic differential equations (SDEs), the computational effort of each 
numerical scheme depends not only on the number of time steps $M$,
but also on the dimensions $N$ and $K$ of the subspaces $H_N$ and $U_K$
which have to increase in order to decrease the approximation error,
compare Theorem~\ref{Error:DFM} and Proposition~\ref{Error:EES}.
However, it turns out that different schemes can attain the same
error estimates like in \eqref{Thm:Error-DFM-Estimate:eqn} and 
\eqref{Thm:Error-EES-Estimate:eqn}, however with significantly different 
computational cost, see also the discussion in \cite{MR3842926}.
In order to compare the performance of different numerical 
schemes, one has to compare the accuracy of each scheme versus the 
needed computational cost instead of just comparing their error 
estimates w.r.t.\ $N$, $K$ and $M$ given in Theorem~\ref{Error:DFM} 
and Proposition~\ref{Error:EES}. For example, the Milstein scheme 
and the derivative-free Milstein type scheme both 
attain the same error estimate
\eqref{Thm:Error-DFM-Estimate:eqn}, however, the computational effort
for $\MIL$ is $\costs(\MIL(N,K,M)) = \Oo(N^2 K M)$ whereas for the $\DFM$ 
scheme it is only $\costs(\DFM(N,K,M))=\Oo(N K M)$ if the random numbers 
are assumed not to be the dominating cost. Therefore, 
in this case, the $\DFM$ scheme performs
a priori with a higher order of convergence compared to the $\MIL$ scheme
if errors versus costs are considered. 
Compared with the $\LIE$ scheme and the 
$\EES$ scheme, the $\DFM$ scheme belongs to the same class $\Oo(N K M)$ of 
computational complexity, which is in some sense optimal for 
one-step approximations for SPDEs of type \eqref{SPDE}. 
Although the $\LIE$ scheme as well as the $\EES$ scheme
have worse error bounds given in \eqref{Thm:Error-EES-Estimate:eqn}
compared to the one for the $\DFM$ and the $\MIL$ scheme
in \eqref{Thm:Error-DFM-Estimate:eqn},
it is not clear which scheme should be preferred because the computational 
cost for simulating the iterated stochastic integrals for 
the $\DFM$ and the $\MIL$ scheme have to be taken into account as well.
Therefore, we derive the effective order of convergence for 
each scheme under consideration.
This concept is also detailed in~\cite{MR3842926}. 
\subsection{Comparison of the Effective Orders of Convergence}
In order to compare the performance of different numerical schemes,
we consider the so-called effective order of convergence which was 
proposed in \cite{MR2669396} and also considered in \cite{MR3842926}.
In the following, we restrict our comparison to the schemes $\DFMA$ and $\MILA$,
both using Algorithm~1, as well as the $\EES$ scheme in order to keep the analysis concise. 
For a detailed analysis and comparison of the effective order of 
convergence for the schemes $\MILA$, $\MILB$ and $\EES$ we refer to \cite{MCQMC2018}.
Since the $\LIE$ scheme and the $\EES$ scheme have the same order of convergence and 
similar computational cost, we restrict our analysis 
to the $\EES$ scheme in the following because one can get exactly the 
same results for the $\LIE$ scheme.
We want to point out that the focus of this article
lies on the introduction and analysis of the derivative-free Milstein type scheme and
a complete comparison taking into account further algorithms next to Algorithm~1
for the simulation of the iterated stochastic integrals
would go beyond the scope of this article and may be object of future research.
\\ \\
For each scheme under consideration
and its approximation process $(Y_m^{N,K,M})_{m\in\{0,\ldots,M\}}$, 
we have to minimize the error term 
\begin{equation*}
  \sup_{m\in\{0,\ldots,M\}}\big( \mathrm{E}\big[ \| X_{t_m} 
  - Y_m^{N,K,M} \|_H^2  \big] \big)^{\frac{1}{2}} 
\end{equation*}
over all $N,K,M \in\mathbb{N}$ under the constraint  
that the computational cost does not exceed some specified value $\bar{c}>0$.
Note that if $D$ is chosen as described in 
Section~\ref{SubSec:Approx-Iterated-Integrals}, then the computational 
cost of each scheme given in Table~\ref{funcEval-1} depends on 
$N$, $K$ and $M$ only. 
In the following, we assume that 
$\sup_{j\in\mathcal{J} \setminus \mathcal{J}_K }\eta_j = \mathcal{O}( K^{-\rho_Q})$
and $( \inf_{i\in\mathcal{I}\setminus \mathcal{I}_N}\lambda_i  )^{-1} 
= \mathcal{O}( N^{-\rho_A})$
for some $\rho_A>0$ and $\rho_Q>1$. Then, we obtain the following expression
for all $N,K,M \in\mathbb{N}$ and some $C>0$, see also~\cite{MR3842926},
\begin{equation*}
  \text{err}(\text{SCHEME}) =\sup_{m\in\{0,\ldots,M\}}
  \Big( \mathrm{E}\Big[ \big\| X_{t_m} - Y_m^{N,K,M} \big\|_H^2  \Big] \Big)^{\frac{1}{2}}
  \leq C \big( N^{-\gamma\rho_A}+K^{-\alpha\rho_Q}+M^{-q} \big).
\end{equation*}
Note that the parameter $q>0$ is determined by the scheme that is considered.
Given some computational cost $\bar{c}>0$, the goal is to minimize 
the error under the constraint that the computational cost is bounded by $\bar{c}$.
Solving this optimization problem yields the effective
order of convergence, denoted by EOC(SCHEME), which is then given by an expression
of the form
\begin{equation*}
  \text{err}(\text{SCHEME}) =\mathcal{O}\big(\bar{c}^{\, \, -\text{EOC(SCHEME)}}\big).
\end{equation*}
Next, we analyze the effective order of convergence for the $\DFMA$ scheme and the $\MILA$ scheme,
which make use of Algorithm~1 for the approximation of the iterated stochastic integrals,
and the $\EES$ scheme. 
Therefore, let $q := \qd = \qm = \min(2(\gamma-\beta),\gamma)$ and let 
$D= \Oo( M^{2q-1} )$ for Algorithm~1 in the following.
\\ \\
First, we consider the scheme $\DFMA$. The computational 
cost for the calculation of one trajectory amounts to 
$\bar{c} = \mathcal{O}(MKN)+\mathcal{O}(KM^{2 q})$, 
see Table~\ref{funcEval-1} and the discussion in the last section. 
Now, two cases have to be distinguished: 
If $\gamma \rho_A (2q -1) \leq q$
is fulfilled, then $\bar{c} = \mathcal{O}(MKN)$.
We solve the optimization problem and obtain 
\begin{equation} \label{Choice-MNK-DFMC2}
  \begin{split}
    M &= \mathcal{O}\Big(\bar{c}^{\, \frac{\gamma\rho_A\alpha\rho_Q}{(\alpha\rho_Q+\gamma\rho_A) q + \alpha\rho_Q\gamma\rho_A}}\Big),
    \quad  N = \mathcal{O}\Big(\bar{c}^{\, \frac{\alpha\rho_Q q}{(\alpha\rho_Q+\gamma\rho_A) q +\alpha\rho_Q\gamma\rho_A}}\Big),\\
    K &= \mathcal{O}\Big(\bar{c}^{\, \frac{\gamma\rho_A q}{(\alpha\rho_Q+\gamma\rho_A) q +\alpha\rho_Q\gamma\rho_A}}\Big) .
  \end{split}
\end{equation}
Further, the effective order of convergence is given by
\begin{equation}\label{StandardEffOrd-DFMC2}
  \text{err}(\DFMA) 
  = \mathcal{O}\Big(\bar{c}^{\, \, -\frac{\gamma\rho_A\alpha\rho_Q q }{(\alpha\rho_Q+\gamma\rho_A) q +\alpha\rho_Q\gamma\rho_A}}\Big) ,
\end{equation}
which is the same result as for the derivative-free Milstein type scheme
in the case of SPDEs with commutative noise, see the computations in~\cite{MR3842926}.
On the other hand, if $\gamma \rho_A (2 q -1) \geq q$ holds, then $\bar{c} = \mathcal{O}(KM^{2 q})$ 
and optimization yields
\begin{align} \label{Choice-MNK-DFM1C1}
  M = \Oo \Big(\bar{c}^{\, \frac{\alpha\rho_Q}
  {(2\alpha\rho_Q+1) q}}\Big), \quad
  N = \Oo \Big(\bar{c}^{\, \frac{\alpha\rho_Q}
  {(2\alpha\rho_Q+1)\gamma\rho_A}}\Big), \quad
  K = \Oo \Big(\bar{c}^{\, \frac{1}
  {2\alpha\rho_Q+1}}\Big) .
\end{align}
In this case, we obtain the effective order of convergence from
\begin{equation}\label{DFM1EffOrd-DFM1C1}
  \text{err}(\DFMA) = \Oo \Big(\bar{c}^{\, \, -\frac{\alpha\rho_Q}
  {2\alpha\rho_Q+1}}\Big) .
\end{equation}
%\\ \\
%
%
Next, we consider the Milstein scheme $\MILA$. Here, the computational effort 
for the computation of one trajectory is $\bar{c} = \mathcal{O}(MKN^2)+\mathcal{O}(KM^{2 q})$,
compare Table~\ref{funcEval-1}.
Again, two cases have to be considered:
If $\gamma \rho_A(2q -1) \leq 2 q$, then $\bar{c} = \mathcal{O}(MKN^2)$
and solving the optimization problem yields 
\begin{equation} \label{Choice-MNK-MIL1-C2}
  \begin{split}
    M &= \mathcal{O}\Big(\bar{c}^{\, \frac{\gamma\rho_A\alpha\rho_Q}{(2\alpha\rho_Q+\gamma\rho_A) q + \alpha\rho_Q\gamma\rho_A}}\Big),
    \quad  N = \mathcal{O}\Big(\bar{c}^{\, \frac{\alpha\rho_Q q}{(2\alpha\rho_Q+\gamma\rho_A) q +\alpha\rho_Q\gamma\rho_A}}\Big),\\
    K &= \mathcal{O}\Big(\bar{c}^{\, \frac{\gamma\rho_A q}{(2\alpha\rho_Q+\gamma\rho_A) q +\alpha\rho_Q\gamma\rho_A}}\Big) .
   \end{split}
\end{equation}
As a result of this, we obtain the effective order of convergence from
\begin{equation}\label{EffOrd-MIL1-C2}
  \text{err}(\MILA) 
  = \mathcal{O}\Big(\bar{c}^{\, \, -\frac{\gamma\rho_A\alpha\rho_Q q }{(2\alpha\rho_Q+\gamma\rho_A) q +\alpha\rho_Q\gamma\rho_A}}\Big),
\end{equation}
which is also the same effective order of convergence as for the Milstein scheme
if it is applied to some SPDE with commutative noise, see also~\cite{MR3842926}.
However, in the case of $\gamma \rho_A(2q -1) \geq 2 q$ the computational 
effort for the $\MILA$ scheme is $\bar{c} = \mathcal{O}(KM^{2 q})$ and 
we obtain the same choice for 
$M$, $N$ and $K$ as given in \eqref{Choice-MNK-DFM1C1} and also the same 
effective order of convergence as given by \eqref{DFM1EffOrd-DFM1C1}, 
see also~\cite{MCQMC2018}.
\\ \\
Finally, we consider the $\EES$ scheme where the optimal 
choice for $M$, $N$ and $K$ is given by \eqref{Choice-MNK-DFMC2},
however, with $q=\qee = \min(\frac{1}{2},2(\gamma-\beta),\gamma)$
and the effective order of convergence for the $\EES$  scheme
was computed in~\cite{MR3842926} and is given by
\begin{equation}\label{EESEffOrd}
  \text{err}(\EES) = \mathcal{O}\Big(\bar{c}^{\, \, -\frac{\gamma\rho_A\alpha\rho_Q \qee}{(\alpha\rho_Q+\gamma\rho_A) \qee
  +\gamma\rho_A\alpha\rho_Q}}\Big) .
\end{equation}
Here, we note that the same holds for the $\LIE$ scheme.
\\ \\
\begin{table}[tbp]
\begin{center}
\renewcommand{\arraystretch}{1.5}
 \begin{tabular}{|c|c|c|c|}\hline 
  Conditions & Optimal scheme & Optimal $M$, $N$, $K$ & \ EOC \ \\ 
  \hline\hline
  \ $q \leq \frac{1}{2}$ \ & $\DFMA = \EES$ & \eqref{Choice-MNK-DFMC2} & 
  $\frac{\gamma\rho_A\alpha\rho_Q q }{(\alpha\rho_Q+\gamma\rho_A) q +\alpha\rho_Q\gamma\rho_A}$ \\
  \hline 
  \ $\gamma \rho_A(2 q -1) \leq q$ \ $\wedge$ \ $q > \frac{1}{2}$ \ & $\DFMA$ & \eqref{Choice-MNK-DFMC2} & 
  $\frac{\gamma\rho_A\alpha\rho_Q q }{(\alpha\rho_Q+\gamma\rho_A) q +\alpha\rho_Q\gamma\rho_A}$ \\
  \hline 
  \ $q \leq \gamma \rho_A(2q -1) \leq 2q$ \ & $\DFMA$ & \eqref{Choice-MNK-DFM1C1} & 
  $\frac{\alpha\rho_Q}{2\alpha\rho_Q+1}$ \\
  \hline 
  \ $2q \leq \gamma \rho_A(2q -1)$ \ & $\DFMA = \MILA$ & \eqref{Choice-MNK-DFM1C1} & 
  $\frac{\alpha\rho_Q}{2\alpha\rho_Q+1}$ \\
  \hline 
 \end{tabular}
\caption{For a given parameter set, the conditions in this table
have to be checked in order to determine the optimal scheme 
among the schemes $\DFMA$, $\MILA$ and $\EES$. 
Here, 
$q=\qd=\qm$.
}
\label{Tab:CompareEffOrder}
\end{center}
\end{table}%
In order to determine the scheme which is most efficient
for the approximation of the solution for an SPDE of type~\eqref{SPDE} that 
needs not to fulfill a commutativity condition for the noise,
we have to compare the effective orders of convergence according to
the distinct parameter settings for the schemes $\DFMA$, $\MILA$ and $\EES$. 
\\ \\
If $\gamma \rho_A (2q-1) \leq q$ and $q \leq \frac{1}{2}$, then, 
it follows that $q= \qee$. Thus, the $\EES$ scheme and the $\DFMA$ scheme have the same 
effective order of convergence 
given in \eqref{StandardEffOrd-DFMC2} and \eqref{EESEffOrd}, whereas 
the $\MILA$ scheme obviously has a lower effective order of convergence given 
in \eqref{EffOrd-MIL1-C2}. 
\\ \\
If $\gamma \rho_A (2q-1) \leq q$ and $q > \frac{1}{2}$, then, 
it follows that $q> \qee = \frac{1}{2}$. Here, the $\DFMA$ scheme has obviously a 
higher effective order of convergence compared to the one of the $\EES$ scheme. 
Further, comparing the effective order of convergence of the $\EES$ scheme 
and the $\MILA$ scheme results in
\begin{equation*}
  \frac{\qm \gamma \rho_A \alpha \rho_Q}{(2\alpha\rho_Q+\gamma\rho_A) \qm
  + \gamma \rho_A \alpha\rho_Q} 
  \leq
  \frac{\frac{1}{2}\gamma\rho_A\alpha\rho_Q}{(\alpha\rho_Q+\gamma\rho_A)\frac{1}{2}
  +\gamma\rho_A\alpha\rho_Q} .
\end{equation*}
Here, it follows that the $\EES$ scheme has a higher order of convergence 
than the $\MILA$ scheme and thus the $\DFMA$ scheme attains the highest effective order 
of convergence in this case.
\\ \\
If $q \leq \gamma \rho_A (2q-1) \leq 2q$, then, it follows that $q>\qee=\frac{1}{2}$.
In this case, it holds for the effective orders of convergence of the $\EES$, the 
$\MILA$ and the $\DFMA$ scheme that
\begin{equation*}
  \frac{\frac{1}{2}\gamma\rho_A\alpha\rho_Q}{(\alpha\rho_Q+\gamma\rho_A)\frac{1}{2}
  +\gamma\rho_A\alpha\rho_Q}
  \leq 
  \frac{\qm \gamma \rho_A \alpha \rho_Q}{(2\alpha\rho_Q+\gamma\rho_A) \qm
  + \gamma \rho_A \alpha\rho_Q} 
  \leq
  \frac{\alpha\rho_Q}{2\alpha\rho_Q+1} .
\end{equation*}
Thus, the $\DFMA$ scheme is the one with the highest effective order of convergence
in the present case.
\\ \\
If $2q \leq \gamma \rho_A (2q-1)$, it holds that $q>\qee=\frac{1}{2}$.
In this case, the $\DFMA$ and the $\MILA$ scheme attain the same effective order of 
convergence given in \eqref{DFM1EffOrd-DFM1C1}. 
As a result of this, a comparison of the effective order of the $\EES$ scheme with
the one of the $\MILA$ scheme and $\DFMA$ scheme results in
\begin{equation*}
  \frac{\frac{1}{2}\gamma\rho_A\alpha\rho_Q}{(\alpha\rho_Q+\gamma\rho_A)\frac{1}{2}
  +\gamma\rho_A\alpha\rho_Q}
  \leq 
  \frac{\alpha\rho_Q}{2\alpha\rho_Q+1} .
\end{equation*}
In this case, the $\DFMA$ scheme and the $\MILA$ scheme have the same effective 
order of convergence which is higher than the one of the $\EES$ scheme.
\\ \\
The same holds true if the $\EES$ scheme is replaced by the $\LIE$ scheme.
We summarize the results of our comparison in Table~\ref{Tab:CompareEffOrder}
which shows that the $\DFMA$ scheme always attains the highest possible effective 
order of convergence. However, in the case of $\qd = \qee \leq \frac{1}{2}$, 
although the $\EES$ scheme and the $\DFMA$ scheme have the same effective order of convergence,
one may prefer the $\EES$ scheme because it requires less computational effort compared 
to the $\DFMA$ scheme, see Table~\ref{funcEval-1}. 
On the other hand, in the case of $2q \leq \gamma \rho_A (2q-1)$,
both the $\DFMA$ and the $\MILA$ scheme have the same optimal effective order of
convergence, which is higher than that of the $\EES$ scheme. Here, one may prefer the 
$\DFMA$ scheme because it needs less computational effort compared to the $\MILA$
scheme, see Table~\ref{funcEval-1}, and because it is derivative-free whereas 
one has to calculate the derivative of the operator $B$ for the $\MILA$ scheme. 
\\ \\
Finally, it has to be pointed out that the maximal 
effective order of convergence that can be attained is always bounded by $1/2$
independent of the given parameters whenever Algorithm~1 is applied to simulate 
the iterated stochastic integrals.
\\ \\
For completeness, we want to note that assumption (A5) 
as well as parts of (A3) do not have to be 
fulfilled for the exponential Euler scheme. This means that there might be 
parameter sets that are valid for the $\EES$ scheme but not for the $\DFM$ scheme and in these 
situations the exponential Euler scheme would be the method of choice. 
Moreover, it is not clear if the obtained upper error bounds 
are sharp and thus if the effective order of convergence may be further improved.
\subsection{The Case of a Finite-Dimensional $Q$-Wiener Process}
If the $Q$-Wiener process $W$ is finite-dimensional, i.e., if 
$|\{ j \in \mathcal{J} : \eta_j \neq 0 \}| < \infty$, the error 
estimate only depends on $M$ and $N$ provided we choose $K=|\{ j \in \mathcal{J} : \eta_j \neq 0 \}|$. 
Then, we obtain new solutions for $M$ and $N$ solving the optimization problem
that minimizes the error under the constraint of a prescribed computational
cost budget $\bar{c}$. Therefore, we compare once more the $\DFMA$ scheme, the $\MILA$ 
scheme and the $\EES$ scheme. Now, the computational cost required to approximate one trajectory
of the solution of SPDE~\eqref{SPDE}
by the $\DFMA$ scheme becomes $\bar{c} = \Oo(MN)+\Oo(M^{2q})$, for the $\MILA$ scheme 
we get $\bar{c}=\Oo(MN^2)+\Oo(M^{2q})$ and for the $\EES$ scheme it is $\bar{c}=\Oo(MN)$.
\\ \\
If $\gamma \rho_A (2q-1) \leq q$, the computational cost for the $\DFMA$ 
scheme is $\bar{c} = \Oo(MN)$ and solving the optimization problem yields
\begin{equation} \label{Choice-MN-DFM1C2}
    M = \Oo \Big(\bar{c}^{\, \frac{\gamma \rho_A}{\gamma \rho_A +q}} \Big), \quad \quad
    N = \Oo \Big(\bar{c}^{\, \frac{q}{\gamma \rho_A +q}} \Big) .
\end{equation}
Then, the effective order of convergence is given by
\begin{equation}\label{DFM1EffOrd-DFM1C2-Kendl}
  \text{err}(\DFMA) = \Oo \Big(\bar{c}^{\, \, -\frac{\gamma \rho_A q}
  {\gamma\rho_A+q}}\Big) .
\end{equation}
If $\gamma \rho_A (2q-1) \geq q$, then $q > \frac{1}{2}$ and $\bar{c} = \Oo(M^{2q})$
for the $\DFMA$ scheme. Here, optimization results in
\begin{equation} \label{Choice-MN-DFM1C1}
    M = \Oo \Big(\bar{c}^{\, \frac{\gamma \rho_A}{2 \gamma \rho_A q}} \Big), \quad \quad
    N = \Oo \Big(\bar{c}^{\, \frac{q}{2\gamma \rho_A q}} \Big) ,
\end{equation}
and the effective order of convergence can be calculated as
\begin{equation}\label{DFM1EffOrd-DFM1C1-Kendl}
  \text{err}(\DFMA) = \Oo \Big(\bar{c}^{\, \, -\frac{1}
  {2}}\Big) .
\end{equation}
Considering the $\MILA$ scheme, again two cases have to be distinguished:
If $\gamma \rho_A (2q-1) \leq 2q$, the $\MILA$ has computational cost $\bar{c} = \Oo(MN^2)$
and optimization yields
\begin{equation} \label{Choice-MN-MIL1C2}
    M = \Oo \Big(\bar{c}^{\, \frac{\gamma \rho_A}{\gamma \rho_A +2q}} \Big), \quad \quad
    N = \Oo \Big(\bar{c}^{\, \frac{q}{\gamma \rho_A +2q}} \Big) 
\end{equation}
and the effective order of convergence is given by
\begin{equation}\label{MIL1EffOrd-MIL1C2-Kendl}
  \text{err}(\MILA) = \Oo \Big(\bar{c}^{\, \, -\frac{\gamma \rho_A q}
  {\gamma\rho_A+2q}}\Big) .
\end{equation}
If $2q \leq \gamma \rho_A (2q-1)$, it follows that $q>\frac{1}{2}$ and that 
the $\MILA$ scheme attains the same computational cost $\bar{c}=\Oo(M^{2q})$ as the $\DFMA$
scheme in the second case. Thus, we also get \eqref{Choice-MN-DFM1C1} for $M$ and 
$N$, and also the same effective order of convergence as given by \eqref{DFM1EffOrd-DFM1C1-Kendl}.
\\ \\
Clearly, for the $\EES$ scheme it holds $\qee \leq \tfrac{1}{2}$ and thus
we get the same results as for the $\DFMA$ scheme given 
in \eqref{Choice-MN-DFM1C2} for $M$ and $N$ as well as by \eqref{DFM1EffOrd-DFM1C2-Kendl}
for the effective order of convergence with $q=\qee$.
\\ \\
\begin{table}[tbp]
\begin{center}
\renewcommand{\arraystretch}{1.5}
 \begin{tabular}{|c|c|c|c|}\hline 
  Conditions & Optimal scheme & Optimal $M$, $N$ & \ EOC \ \\ 
  \hline\hline
  \ $q \leq \frac{1}{2}$ \ & $\DFMA = \EES$ & \eqref{Choice-MN-DFM1C2} & 
  $\frac{\gamma \rho_A q}{\gamma\rho_A+q}$ \\
  \hline 
  \ $\gamma \rho_A(2 q -1) \leq q$ \ $\wedge$ \ $q > \frac{1}{2}$ \ & $\DFMA$ & \eqref{Choice-MN-DFM1C2} & 
  $\frac{\gamma \rho_A q}{\gamma\rho_A+q}$ \\
  \hline 
  \ $q \leq \gamma \rho_A(2q -1) \leq 2q$ \ & $\DFMA$ & \eqref{Choice-MN-DFM1C1} & 
  $\frac{1}{2}$ \\
  \hline 
  \ $2q \leq \gamma \rho_A(2q -1)$ \ & $\DFMA = \MILA$ & \eqref{Choice-MN-DFM1C1} & 
  $\frac{1}{2}$ \\
  \hline 
 \end{tabular}
\caption{In case of a finite-dimensional $Q$-Wiener process and $K=|\{ j \in \mathcal{J} : \eta_j \neq 0 \}| < \infty$, 
the conditions in this table have to be checked in order to determine the optimal scheme 
among the schemes $\DFMA$, $\MILA$ and $\EES$. 
Here, 
%$q=\qd=\qm = \min(2(\gamma-\beta),\gamma)$.
$q=\qd=\qm$.
}
\label{Tab:CompareEffOrder-Kendl}
\end{center}
\end{table}%
Finally, comparing the effective orders of convergence for the schemes under consideration,
we easily derive the results presented in Table~\ref{Tab:CompareEffOrder-Kendl}. Here, 
again the $\DFMA$ scheme performs better or at least as good as one of the other schemes.
Clearly, in the case of $\qd=\qm=\qee \leq \frac{1}{2}$ one may prefer the $\EES$ scheme or the 
$\LIE$ scheme although they have the same effective order of convergence as the $\DFMA$ scheme
because they are easier to implement. However, in the case of $2q \leq \gamma \rho_A(2q -1)$
where the $\DFMA$ scheme and the $\MILA$ scheme attain the same effective order of convergence
one may prefer the $\DFMA$ scheme because it needs less computational effort and because 
no derivative of the operator $B$ is needed by the $\DFMA$ scheme. Again, the 
effective order of convergence is always bounded by $1/2$ as for the infinite-dimensional 
noise case.
\section{Numerical Analysis} \label{NumExampleNonComm}
In this section, we compare the $\DFMA$ scheme to the $\MILA$ and 
the $\EES$ schemes to demonstrate the theoretical results presented above,
summarized in Tables~\ref{Tab:CompareEffOrder} and~\ref{Tab:CompareEffOrder-Kendl}.
Here, we approximate the mild solution of SPDE~\eqref{SPDE}, that is,
\begin{equation*}
	X_t = e^{At}\xi + \int_0^t e^{A(t-s)}F(X_s)\, \mathrm{d}s 
	+ \int_0^t e^{A(t-s)}B(X_s)\, \mathrm{d}W_s, \quad t\in(0,T].
\end{equation*}
For the numerical analysis, we consider the following setting.
We fix $H=U=L^2((0,1),\mathbb{R})$, set $T=1$,
and $\mathcal{I}=\mathcal{J} =\mathbb{N}$.
Let $A$ be the Laplace operator with Dirichlet 
boundary conditions. To be precise, $A=\frac{\Delta}{100}$ with eigenvalues 
$\lambda_i = \frac{\pi^2 i^2}{100}$ of $-A$
and eigenvectors $e_i= \sqrt{2} \sin(i\pi x)$ for $i\in\mathbb{N}$, $x\in(0,1)$
and on the boundary, we have
$X_t(0) = X_t(1) = 0$ for all $t\in(0,T]$.
The covariance operator $Q$ is defined by the eigenvalues $\eta_j = j^{-\rho_Q}$ 
for some $\rho_Q>1$ which is given separately 
for each example  below
and $\tilde{e}_j = \sqrt{2} \sin(j\pi x)$ for $j\in\mathbb{N}$, $x\in(0,1)$. 
For the operator $B$, we present
the general setting introduced for
the numerical analysis in~\cite{MR3842926}.
Define the functionals $\mu_{ij}:H_{\beta}\rightarrow \mathbb{R}$, 
$\phi_{ij}^k:H_{\beta}\rightarrow \mathbb{R}$ 
for $i,k\in \mathcal{I}$, $j\in\mathcal{J}$
such that $\phi_{ij}^k$ is the Fr\'{e}chet derivative of $\mu_{ij}$ in 
direction $e_k$ and let
\begin{align*}
	B(y)u =\sum_{i\in\mathcal{I}}\sum_{j\in\mathcal{J}} 
	\mu_{ij}(y)\langle u,\tilde{e}_j\rangle_U e_i,
\end{align*}
as well as
\begin{align*}
	B'(y)(B(y)v,u) =\sum_{i,k\in\mathcal{I}}\sum_{j,r\in\mathcal{J}} 
	\phi_{ij}^k(y)\mu_{kr}(y)\langle v,\tilde{e}_r\rangle_U \langle u,\tilde{e}_j\rangle_U e_i
\end{align*}
for $y\in H_{\beta}$ and $u,v\in U_0$, see~\cite[Sec.~5.3]{MR3842926}
for details.
\\ \\
We choose $\mu_{ij}(y) = \frac{\langle y, e_j \rangle_H}{i^p+j^4}$ 
for all $i \in \mathcal{I}$, $j\in\mathcal{J}$, 
$y\in H$ and some $p>1$ that differs in the examples presented below,
which leads to
$\phi_{ij}^k(y) = \left\{\begin{array}{rr} 0, & k \neq j \\ \frac{1}{i^p+j^4},
& k = j \end{array}\right.$ for all $i,k \in\mathcal{I}$, $j\in \mathcal{J}$, $y\in H$. 
This is the setting considered in~\cite{MCQMC2018}.
The assumptions (A1), (A2), and 
(A4) are obviously fulfilled. We only elaborate on (A3). 
By the definition of the $L(U,H_{\delta})$-norm and the operator $B$, we obtain
\begin{equation*}
\|B(y)\|_{L(U,H_{\delta})}
=\sup_{\substack{u\in U \\ \|u\|_U = 1}}\Big \|\sum_{i\in\mathcal{I}}
\sum_{j\in\mathcal{J}} \lambda_i^{\delta} \mu_{ij}(y) \langle u,\tilde{e}_j \rangle_U e_i \Big\|_H.
\end{equation*}
In the next steps, we employ the Parseval equality and the triangle inequality
\begin{align*}
\|B(y)\|_{L(U,H_{\delta})}
&=\sup_{\substack{u\in U \\ \|u\|_U = 1}}
\Big( \sum_{i\in\mathcal{I}} \Big| \sum_{j\in\mathcal{J}} \lambda_i^{\delta} \mu_{ij}(y)
\langle u,\tilde{e}_j \rangle_U \Big|^2 \Big)^{\frac{1}{2}} \\
&\leq \sup_{\substack{u\in U \\ \|u\|_U = 1}}
\Big( \sum_{i\in\mathcal{I}} \Big( \sum_{j\in\mathcal{J}} |\lambda_i^{\delta}|
\cdot | \mu_{ij}(y) | \cdot | \langle u,\tilde{e}_j \rangle_U | \Big)^2 \Big)^{\frac{1}{2}}.
\end{align*}
It holds by Parseval's equality that
\begin{align*}
\|y\|^2_{H_{\delta}} = \| (-A)^{\delta}y \|^2_H 
= \sum_{i\in\mathcal{I}} |\lambda_i^{\delta} \langle y,e_i \rangle_H|^2
\end{align*}
and therewith 
	\begin{equation} \label{NormParseval}
	|\langle y, e_j \rangle_H |^2 
	= \lambda_j^{-2\delta}|\lambda_j^{\delta} \langle y, e_j \rangle_H |^2
	\leq \lambda_j^{-2\delta} \|y\|_{H_{\delta}}^2
	\end{equation}
for all $j \in \mathcal{J}$.
As $|\langle u,\tilde{e}_j\rangle_U|^2 \leq 1$ by Parseval, we obtain
\begin{align*}
\|B(y)\|_{L(U,H_{\delta})}
&\leq \Big( \sum_{i\in\mathcal{I}} \lambda_i^{2\delta} 
 \Big( \sum_{j\in\mathcal{J}} |\mu_{ij}(y)| \Big)^2 \Big)^{\frac{1}{2}} \\
&= \Big( \sum_{i\in\mathcal{I}} \frac{\pi^{4\delta}i^{4\delta}}{100^{2\delta}} 
\Big( \sum_{j\in\mathcal{J}} \frac{| \langle y,e_j\rangle_H |}{i^p+j^4} \Big)^2 \Big)^{\frac{1}{2}} \\ 
&\leq \Big( \sum_{i\in\mathcal{I}} \frac{\pi^{4\delta}i^{4\delta}}{100^{2\delta}} 
\Big( \sum_{j\in\mathcal{J}} \frac{\lambda_j^{-\delta} \|y\|_{H_{\delta}} }{i^p+j^4} \Big)^2 \Big)^{\frac{1}{2}}.
\end{align*}
Then, for some $\varepsilon \in (0,2\delta)$, 
some $C_1=C_1(\varepsilon,\delta)>0$ and with 
$r=\frac{4}{3-\varepsilon+2\delta}>1$, $q=\frac{4}{1+\varepsilon-2\delta}>1$
such that $\frac{1}{r}+\frac{1}{q}=1$, Young's inequality gives the estimate
\begin{align*}
\|B(y)\|_{L(U,H_{\delta})}
&\leq \Big( \sum_{i\in\mathcal{I}}  i^{4\delta} 
\Big( \sum_{j\in\mathcal{J}} \frac{j^{-2\delta}}{
	 r^{\frac{1}{r}} \, q^{\frac{1}{q}} \, 
	i^{\frac{3-\varepsilon+2\delta}{4}p} \, j^{1+\varepsilon-2\delta}}
\Big)^2 \Big)^{\frac{1}{2}} \|y\|_{H_{\delta}} \\
 &\leq C_1  \Big( \sum_{i\in\mathcal{I}} i^{4\delta-\frac{3-\varepsilon+2\delta}{2}p} 
\Big)^{\frac{1}{2}} \|y\|_{H_{\delta}}.
\end{align*}
If for $\delta \in (0,\tfrac{1}{2})$ it holds that 
$p > \tfrac{2+8 \delta}{3+2 \delta}$, then it follows that
$\|B(y)\|_{L(U,H_{\delta})}\leq C(1+\|y\|_{H_{\delta}})$ 
for all $y\in H_{\delta}$.
\\ \\
Next, we compute the term
\begin{align*}
\|(-A)^{-\vartheta}B(y)Q^{-\alpha}\|_{L_{HS}(U_0,H)} 
&= \Big( \sum_{j\in\mathcal{J}} \|(-A)^{-\vartheta} B(y) 
Q^{-\alpha +\frac{1}{2} }\tilde{e}_j \|_H^2 \Big)^{\frac{1}{2}}
\end{align*}
for all $y\in H_{\gamma}$.
We rewrite the expression above to obtain
\begin{align*}
\|(-A)^{-\vartheta}B(y)Q^{-\alpha}\|_{L_{HS}(U_0,H)} 
&= \Big( \sum_{k\in\mathcal{I}} \sum_{j\in\mathcal{J}} |\langle (-A)^{-\vartheta} B(y)
Q^{-\alpha+\frac{1}{2}} \tilde{e}_j , e_k \rangle_H |^2 \Big)^{\frac{1}{2}} \\
&= \Big( \sum_{i\in\mathcal{I}} \sum_{j\in\mathcal{J}} | \lambda_i^{-\vartheta} \langle B(y)
Q^{-\alpha+\frac{1}{2}} \tilde{e}_j , e_i\rangle_H |^2 \Big)^{\frac{1}{2}} \\
&= \Big( \sum_{i\in\mathcal{I}} \sum_{j\in\mathcal{J}} \lambda_i^{-2\vartheta} 
| \langle B(y) \eta_j^{-\alpha+\frac{1}{2}} \tilde{e}_j , e_i\rangle_H |^2 \Big)^{\frac{1}{2}}.
\end{align*}
Here, we employed the definition of the operators $A$ and $Q$.
In the next step, we insert the definition of the operator $B$
\begin{align*}
\|(-A)^{-\vartheta}B(y)Q^{-\alpha}\|_{L_{HS}(U_0,H)} 
&= \Big( \sum_{i\in\mathcal{I}} \sum_{j\in\mathcal{J}} \lambda_i^{-2\vartheta}
 \eta_j^{-2\alpha+1}  |\mu_{ij}(y)|^2 \Big)^{\frac{1}{2}} \\
&= \Big( \sum_{i\in\mathcal{I}} \sum_{j\in\mathcal{J}} \frac{\pi^{-4\vartheta} i^{-4\vartheta}}{100^{-2\vartheta}}
j^{(2\alpha-1) \rho_Q} \frac{| \langle y,  e_j  \rangle_H |^2}{|i^p+j^4|^2} \Big)^{\frac{1}{2}}.
\end{align*}
By Parseval's equality and calculations as 
in~\eqref{NormParseval}, we obtain for some $C_2>0$ that
\begin{align*}
\|(-A)^{-\vartheta} B(y) Q^{-\alpha} \|_{L_{HS}(U_0,H)} 
&\leq C_2 \Big( \sum_{i\in\mathcal{I}} \sum_{j\in\mathcal{J}} i^{-4\vartheta}
j^{(2\alpha-1) \rho_Q} j^{-4\gamma} \frac{\|y \|_{H_{\gamma}}^2}{|i^p+j^4|^2} \Big)^{\frac{1}{2}}.
\end{align*}
Then, for all $\varepsilon\in(4\vartheta-1,4\vartheta-1+2p)$ such that 
$r=\frac{2p}{1+\varepsilon-4\vartheta}>1$, $q=\frac{2p}{2p-1-\varepsilon+4\vartheta}>1$, 
Young's inequality yields that
\begin{align*}
\|(-A)^{-\vartheta}B(y)Q^{-\alpha}\|_{L_{HS}(U_0,H)} 
&\leq C_2 \Big( \sum_{i\in\mathcal{I}} \sum_{j\in\mathcal{J}} i^{-4\vartheta}
j^{(2\alpha-1) \rho_Q - 4 \gamma} \frac{\|y \|_{H_{\gamma}}^2}{ \big( r^{\frac{1}{r}} \,
	i^{\frac{1+\varepsilon-4\vartheta}{2}} \, q^{\frac{1}{q}} \, j^{\frac{4p-2-2\varepsilon+8\vartheta}{p}} \big)^2} 
\Big)^{\frac{1}{2}} \\
&\leq C_3 \Big(\sum_{i\in\mathcal{I}} \frac{1}{i^{1+\varepsilon}}\Big)^{\frac{1}{2}}
\Big( \sum_{j\in\mathcal{J}} \frac{1}{j^{(1-2\alpha)\rho_Q+4\gamma+8-\frac{4}{p}
		-\frac{4\varepsilon}{p}+\frac{16\vartheta}{p}}} \Big)^{\frac{1}{2}} \|y \|_{H_{\gamma}}
\end{align*}
with $C_3=C_3(\varepsilon,\vartheta,p)>0$. Therefore,
$\|(-A)^{-\vartheta}B(y)Q^{-\alpha}\|_{L_{HS}(U_0,H)} \leq C (1+\|y\|_{H_{\gamma}})$ 
holds for all $y\in H_{\gamma}$ and some $C>0$ if 
$\alpha <\frac{7+\rho_Q+4\gamma}{2\rho_Q}$, if $p > \frac{1-4\vartheta}{2}$
and $\varepsilon \in (\max (0,4\vartheta-1),4\vartheta-1+2p)$. 
In the following examples, $p > \max( \tfrac{2+8\delta}{3+2\delta}, 
\tfrac{1-4\vartheta}{2})$ and $\rho_Q$ are specified and we select $\gamma$ 
and $\alpha$ to be maximal.
We do not state any other condition given in (A3) as these do not pose 
a restriction on the parameters but note that these are fulfilled as well.
%\\ \\
%
%
Finally, we examine the commutativity condition~\eqref{Comm}.
On the one hand, it holds that
\begin{align*}
	\sum_{k\in\mathcal{I}} \phi_{im}^k(y)\mu_{kn}(y)=  \frac{1}{i^p+m^4}
	\frac{\langle y,e_n\rangle_H}{m^p+n^4}
\end{align*}
but on the other hand, it holds that
\begin{align*}
	\sum_{k\in\mathcal{I}} \phi_{in}^k(y)\mu_{km}(y) =  \frac{1}{i^p+n^4}
	\frac{\langle y,e_m\rangle_H}{n^p+m^4}
\end{align*}
for all $y\in H$ and all $i\in\mathcal{I}$, $m,n \in\mathcal{J}$. 
Obviously, these two expressions differ for some choice 
of $m, n \in\mathcal{J}$. Thus, the considered example does not fulfill
the commutativity condition~\eqref{Comm}. 
\subsection{Example 1} \label{Sub-Sec-Example-1}
In the first example, we set the parameters to $p=\tfrac{4}{3}$, 
$\rho_Q = 3$ and the nonlinearity $F(y) = 1-y$, $y\in H$.
This allows for $\beta \in [0,1)$ and we choose $\beta=0$.
Moreover, we set the initial value $\xi(x)= X_0(x) =0$ for all $x\in(0,1)$.
From condition $p > \max( \tfrac{2+8\delta}{3+2\delta}, 
\tfrac{1-4\vartheta}{2})$, it follows that $\delta \in (0,\tfrac{3}{8})$
and $\vartheta \in (0,\tfrac{1}{2})$. Therefore, we set
$\delta = \tfrac{3}{8}-\varepsilon_{\delta}$ for some arbitrarily 
small $\varepsilon_{\delta}>0$. From these parameter values, we compute 
$\gamma \in [\tfrac{3}{8}-\varepsilon_{\delta},\tfrac{7}{8}-\varepsilon_{\delta})$
and we thus choose $\gamma= \tfrac{7}{8}-\varepsilon_{\delta} -\varepsilon_{\gamma}$
for some arbitrarily small $\varepsilon_{\gamma}>0$.
As a result of this, it follows that $q= q_{\DFM} = q_{\MIL} = \frac{7}{8}-\hat{\varepsilon}$
with $\hat{\varepsilon} = \varepsilon_{\delta} + \varepsilon_{\gamma}>0$
arbitrarily small. From the condition $\alpha <\frac{7+\rho_Q+4\gamma}{2\rho_Q}$,
we directly get that $\alpha \in (0,\tfrac{27}{12}-\tfrac{2}{3} \hat{\varepsilon})$
and we choose $\alpha= \frac{27}{12}-\varepsilon_{\alpha}$ for some
arbitrarily small $\varepsilon_{\alpha} > \tfrac{2}{3} \hat{\varepsilon}>0$.
Thus, assumption (A3) holds, as discussed above.
Furthermore, condition (A5\ref{A5a}) is fulfilled as $\rho_Q >2$. 
\\ \\
With these parameters, we can identify the scheme that is superior.
For this example, it holds that $q < \gamma \rho_A (2q-1) \leq 2q$
for sufficiently small $\hat{\varepsilon}>0$.
Thus, the $\DFMA$ scheme is optimal, i.e., it is the scheme with the highest 
effective order of convergence according to Table~\ref{Tab:CompareEffOrder}. 
In order to compare the $\DFMA$ scheme to the other schemes 
under consideration, we calculate the effective orders of convergence
for each of the schemes.
We expect that the scheme $\DFMA$ obtains the highest effective 
order of convergence in this setting with 
\begin{equation*}
	\text{error}(\DFMA)= \Oo \big( \bar{c}^{-\frac{27-12\varepsilon_{\alpha}}{58-24 \varepsilon_{\alpha}}} \big)
\end{equation*}
given by~\eqref{DFM1EffOrd-DFM1C1}, i.e., $\text{EOC}(\DFMA) \approx \tfrac{27}{58}$.
Moreover, we fix some arbitrary $N\in\mathbb{N}$ and compute the relation $M = N^2$ and 
$K= \big\lceil N^{\frac{\frac{7}{4}-2 \hat{\varepsilon}}{\frac{27}{4}-3 \varepsilon_{\alpha}}} 
\big\rceil \approx \big\lceil N^{\frac{7}{27}} \big\rceil$ 
as given in \eqref{Choice-MNK-DFM1C1} for the implementation of the $\DFMA$ scheme. 
\\ \\
Considering the scheme $\MILA$, the effective order of convergence
for this scheme is given by \eqref{EffOrd-MIL1-C2} with
\begin{equation*}
	\text{error}(\MILA)= \Oo \Big( \bar{c}^{
	-\frac{\frac{189}{32} -\frac{27}{4} \hat{\varepsilon} -\frac{21}{8} \varepsilon_{\alpha}
		+3 \hat{\varepsilon} \varepsilon_{\alpha}}{\frac{115}{8} -6\varepsilon_{\alpha} -\hat{\varepsilon}}}
	\Big),
\end{equation*}
i.e., $\text{EOC}(\MILA) \approx \tfrac{189}{460}$. For this example, the relations between 
$N$, $K$ and $M$ for the $\MILA$ scheme given in \eqref{Choice-MNK-MIL1-C2}
are exactly the same as for the $\DFMA$ scheme.
\\ \\
For the $\EES$ scheme, on the other hand, we obtain from \eqref{Choice-MNK-DFMC2} 
for some arbitrarily fixed 
$N \in \mathbb{N}$ the relation $M= \big\lceil N^{\frac{7}{2} -4 \hat{\varepsilon}} \big\rceil
\approx \big\lceil N^{\frac{7}{2}} \big\rceil$ and $K= \big\lceil N^{\frac{\frac{7}{4} 
-2 \hat{\varepsilon}}{\frac{27}{4} -3 \varepsilon_{\alpha}}} \big\rceil \approx \big\lceil 
N^{\frac{7}{27}} \big\rceil$ as an optimal choice. The effective order of convergence
for the $\EES$ scheme is given as
\begin{equation*}
	\text{error}(\EES) = \Oo \Big( \bar{c}^{-\frac{\frac{189}{32} -\frac{21}{8} \varepsilon_{\alpha}
		-\frac{27}{4} \hat{\varepsilon} + 3 \hat{\varepsilon} \varepsilon_{\alpha}}{
		\frac{257}{16} -\frac{27}{4} \varepsilon_{\alpha} -\frac{29}{2} \hat{\varepsilon} 
		+6 \hat{\varepsilon} \varepsilon_{\alpha}}} \Big)
\end{equation*}
as stated in \eqref{EESEffOrd}, i.e., it holds $\text{EOC}(\EES) \approx \tfrac{189}{514}$. 
\\ \\
As a result of this, for this example, it holds that $\text{EOC}(\EES) 
< \text{EOC}(\MILA) < \text{EOC}(\DFMA)$ and thus the $\DFMA$ scheme
performs better than the other schemes.
For the numerical evaluation, we compare the schemes $\DFMA$, $\MILA$ and $\EES$ to
an approximation computed with the linear implicit Euler scheme
with $N = 2^6$, $K= \lceil2^{\frac{14}{9}}\rceil$ and $M = \lceil2^{\frac{35}{2}}\rceil$ that serves as the 
reference solution. We simulate 500 paths with each scheme 
and each prescribed computational cost to compare the mean-square error 
versus computational cost, see Figure~\ref{Plot:NC1}. Then, the slope indicates
the effective order of convergence if $\log$-$\log$-scales are used. This confirms 
that for this example the $\DFMA$ scheme performs significantly better than
the $\MILA$ scheme and the $\EES$ scheme.
The results are also stated in Table~\ref{Tab:NC1}.
\begin{table}[htbp!]
	\begin{small}
		\begin{center}
			\begin{tabular}{|p{0.6cm}|p{0.9cm}|p{0.9cm}||p{1.6cm}|p{1.6cm}|p{1.6cm}||p{1.6cm}|p{1.6cm}|p{1.6cm}|}\hline
				\multicolumn{3}{|c||}{} & \multicolumn{3}{c||}{$\DFMA$ scheme}      &    \multicolumn{3}{c|}{$\MILA$ scheme} \\ 
				\hline
				$N$   & $M$       &   $K$                & $\CC$                        &Error                  & Std                   &  $\CC$       & Error                  & Std     \\ \hline
				2   & 4     & $\lceil 2^{\frac{7}{27}} \rceil$  & 94          		& $3.77\cdot 10^{-2}$	&  $2.38\cdot 10^{-3}$   & 110			       &$3.78\cdot 10^{-2}$	&  $2.30\cdot 10^{-3}$   \\ \hline
				4   & $2^4$   & $\lceil 2^{\frac{14}{27}} \rceil$   & 864			& $2.95\cdot 10^{-2}$ 	&  $1.25\cdot 10^{-3}$	& 1248		               &$2.95\cdot 10^{-2}$ &  $1.25\cdot 10^{-3}$  \\ \hline		
				8   & $2^6$   & $\lceil 2^{\frac{21}{27}} \rceil$             & 8481			& $1.81\cdot 10^{-2}$	&  $5.33\cdot 10^{-4}$   & 15649 	 &$1.81\cdot 10^{-2}$	&  $5.15\cdot 10^{-4}$   \\ \hline
				16  & $2^8$   & $\lceil 2^{\frac{28}{27}} \rceil$   & 127744			& $6.84\cdot 10^{-3}$ 	&  $8.63\cdot 10^{-5}$ 	& 312064	     &$6.84\cdot 10^{-3}$ 	&  $8.31\cdot 10^{-5}$   \\ \hline
				32  & $2^{10}$  & $\lceil 2^{\frac{35}{27}} \rceil$  & 1344631			& $1.85\cdot 10^{-3}$	&  $4.84\cdot 10^{-5}$	& 4392055	   &$1.85\cdot 10^{-3}$	&  $5.23\cdot 10^{-5}$   \\ \hline
			\end{tabular}
			\quad \\[0.2cm]
			\begin{tabular}{|p{0.6cm}|p{0.9cm}|p{0.9cm}||p{1.6cm}|p{1.6cm}|p{1.6cm}|}\hline
				\multicolumn{3}{|c||}{} & \multicolumn{3}{|c|}{$\EES$ scheme} \\ 
				\hline
				$N$    & $M$       & $K$                & $\CC$                 & Error                    & Std   \\ \hline
				2    &$\lceil 2^{\tfrac{7}{2}} \rceil$   & $\lceil 2^{\frac{7}{27}} \rceil$   & 96			&$2.65\cdot 10^{-2}$ 	&  $2.46\cdot 10^{-3}$  \\ \hline
				4    &$2^{7}$   &$\lceil 2^{\frac{14}{27}} \rceil$  & 1792			&$3.06\cdot 10^{-2}$ 	&  $1.41\cdot 10^{-3}$   \\ \hline		
				8    &$\lceil 2^{\tfrac{21}{2}} \rceil$ & $\lceil 2^{\frac{21}{27}} \rceil$     & 37674		&$1.83\cdot 10^{-2}$ 	&  $5.11\cdot 10^{-4}$   \\ \hline
				16   &$2^{14}$ & $\lceil 2^{\frac{28}{27}} \rceil$ & 1097728 		&$6.81\cdot 10^{-3}$     &  $1.15\cdot 10^{-4}$   \\ \hline
				32   &$\lceil 2^{\tfrac{35}{2}} \rceil$ & $\lceil 2^{\frac{35}{27}} \rceil$ & 24282684 		&$1.83\cdot 10^{-3}$     &  $4.74\cdot 10^{-5}$  \\ \hline
			\end{tabular}
		\end{center}
	\end{small}
	\caption[Error and standard deviation for Example~1]{Error and standard deviation for
		Example~1 obtained from 500 paths. The computational cost $\CC$ is computed as
			$\CC(\DFMA) = MN+2MNK+MK(1+2M^{2q-1})$, $\CC(\text{\MILA}) = MN+MNK+MN^2K+MK(1+2M^{2q-1})$,
			and $\CC(\EES) =  MN+MNK+MK$.}
	\label{Tab:NC1}
\end{table}
\begin{figure}[H]
	\begin{center}
		\includegraphics[height = 6cm, width = 0.49\textwidth]{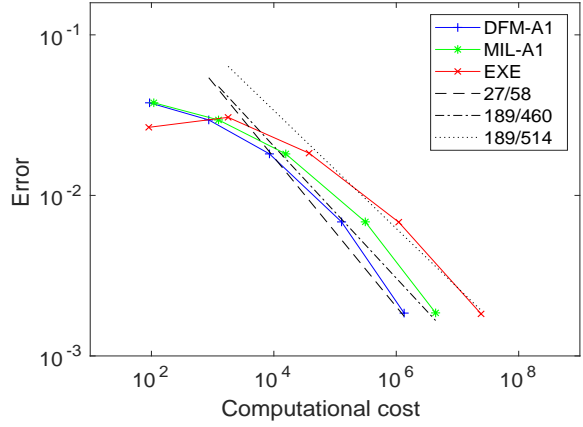}
		\caption[Error against computational cost for Example~1]{Error against computational cost for
			Example~1 computed from 500 paths for $N \in\{ 2, 4, 8, 16, 32\}$ in $\log$-$\log$ scale.}
		\label{Plot:NC1}
	\end{center}
\end{figure}
\subsection{Example 2} \label{Sub-Sec-Example-2}
Here, we choose a smaller value $p = \tfrac{44}{41}$ and the 
same covariance operator $Q$ as in Example~1
with $\rho_Q =3$. Thus, condition (A5\ref{A5a}) is fulfilled. As 
in Example~1, we consider the nonlinearity 
$F(y) = 1-y$, $y\in H$ and choose $\beta=0$. 
Again, the initial value is chosen as $\xi(x)= X_0(x) =0$ for all $x\in(0,1)$.
Further, we calculate
the condition $\delta \in (0, \tfrac{5}{24})$ and choose $\delta 
= \tfrac{5}{24} -\varepsilon_{\delta}$ for some arbitrarily small
$\varepsilon_{\delta} >0$. Then, we get $\gamma \in [ \tfrac{5}{24},
\tfrac{17}{24} -\varepsilon_{\delta} )$ and we set $\gamma = 
\tfrac{17}{24} -\varepsilon_{\delta} - \varepsilon_{\gamma}$ 
for some arbitrarily small $\varepsilon_{\gamma}>0$. This implies
$q= q_{\DFMA} = q_{\MILA} = \tfrac{17}{24} - \hat{\varepsilon}$
with $\hat{\varepsilon} = \varepsilon_{\delta} + \varepsilon_{\gamma} >0$
arbitrarily small. Moreover, one can choose $\vartheta \in (0, \tfrac{1}{2})$
arbitrarily. Finally, we calculate that $\alpha \in (0, \tfrac{77}{36} 
-\tfrac{2}{3} \hat{\varepsilon} )$ and therefore set $\alpha = \tfrac{77}{36}
- \varepsilon_{\alpha}$ with $\varepsilon_{\alpha} > \tfrac{2}{3} 
\hat{\varepsilon} > 0$ arbitrarily small. 
\\ \\
Checking the conditions in Table~\ref{Tab:CompareEffOrder}, 
we are in the case of $q> \tfrac{1}{2}$ and $\gamma\rho_A(2q-1) \leq q$
for sufficiently small $\hat{\varepsilon}>0$.
In this case, the optimal effective order of convergence is obtained by the 
$\DFMA$ scheme according to Table~\ref{Tab:CompareEffOrder}.
For the $\DFMA$ scheme, we get from \eqref{StandardEffOrd-DFMC2}
that
\begin{equation*}
	\text{error}(\DFMA) = \Oo \Big( \bar{c}^{-\frac{\frac{1309}{144} 
		-\frac{17}{4} \varepsilon_{\alpha} -\frac{77}{6} \hat{\varepsilon}}{
		\frac{62}{3} -9 \varepsilon_{\alpha} -2 \hat{\varepsilon}}} \Big) ,
\end{equation*}
i.e., $\text{EOC}(\DFMA) \approx \tfrac{1309}{2976}$. The optimal
choice of $M$ and $K$ given some $N \in \mathbb{N}$ is then 
determined in \eqref{Choice-MNK-DFMC2}, which results in 
$M = N^2$ and $K = \big\lceil N^{\frac{\frac{17}{12} -2 \hat{\varepsilon}}{
\frac{77}{12} -3 \varepsilon_{\alpha}}} \big\rceil \approx \big\lceil 
N^{\frac{17}{77}} \big\rceil$.
\\ \\
Considering the $\MILA$ scheme, we obtain from \eqref{EffOrd-MIL1-C2}
the effective order of convergence 
\begin{equation*}
	\text{error}(\MILA) = \Oo \Big( \bar{c}^{-\frac{\frac{1309}{144} 
		-\frac{17}{4} \varepsilon_{\alpha} -\frac{77}{6} \hat{\varepsilon}}{
		\frac{325}{12} -12 \varepsilon_{\alpha} -2 \hat{\varepsilon}}} \Big) ,	
\end{equation*}
i.e., it holds that $\text{EOC}(\MILA) \approx \frac{1309}{3900}$.
Given some $N \in \mathbb{N}$, the optimal choice for $M$ and $K$
is given in \eqref{Choice-MNK-MIL1-C2} and yields the same results 
as for the $\DFMA$ scheme in this example.
\\ \\
For the Euler scheme, it holds $q_{\EES}=\frac{1}{2}$ which in turn
yields with \eqref{Choice-MNK-DFMC2} that $M = \big\lceil 
N^{\frac{17}{6} +4 \hat{\varepsilon}} \big\rceil \approx \big\lceil N^{\frac{17}{6}} \big\rceil$
and $K= \big\lceil N^{\frac{\frac{17}{12} -2 \hat{\varepsilon}}{\frac{77}{12} -3 \varepsilon_{\alpha}}} \big\rceil
\approx \big\lceil N^{\frac{17}{77}} \big\rceil$. For the effective order of convergence,
we obtain 
\begin{equation*}
	\text{error}(\EES) =\Oo \Big( \bar{c}^{-\frac{\frac{1309}{288} -\frac{17}{8} \varepsilon_{\alpha} 
		- \frac{77}{12} \hat{\varepsilon}}{\frac{1873}{144} - \frac{23}{4} \varepsilon_{\alpha}
		- \frac{83}{6} \hat{\varepsilon} + 6 \varepsilon_{\alpha} \hat{\varepsilon}}} \Big) ,
\end{equation*}
i.e., it holds $\text{EOC}(\EES) \approx \tfrac{1309}{3746}$.
\\ \\
Now, if we compare the effective orders of convergence for the schemes under
consideration, then we have $\text{EOC}(\MILA) < \text{EOC}(\EES) < \text{EOC}(\DFMA)$.
In this example, again the $\DFMA$ scheme performs best with the highest 
effective order of convergence and the original Milstein scheme
$\MILA$ has the lowest effective order of convergence, which is even less than that 
of the $\EES$ scheme. As in Example~1, we substitute the exact solution 
with an approximation computed by the linear implicit Euler scheme. Precisely, we choose
$N = 2^6$, $K= \lceil2^{\frac{102}{77}}\rceil$ and $M = \lceil2^{\frac{85}{6}}\rceil$ for the computation of 
the reference solution and compute 500 paths. Table~\ref{Tab:NC2} and Figure~\ref{Plot:NC2}
show the difference in the effective order of convergence between the 
derivative-free Milstein type scheme $\DFMA$, the Milstein scheme $\MILA$ 
and the exponential Euler scheme $\EES$.
\begin{table}[htbp!]
	\begin{small}
		\begin{center}
			\begin{tabular}{|p{0.6cm}|p{0.9cm}|p{0.9cm}||p{1.6cm}|p{1.6cm}|p{1.6cm}||p{1.6cm}|p{1.6cm}|p{1.6cm}|}\hline
				\multicolumn{3}{|c||}{} & \multicolumn{3}{c||}{$\DFMA$ scheme}      &    \multicolumn{3}{c|}{$\MILA$ scheme}       \\ \hline
				$N$   & $M$       & $K$                & $\CC$                &Error             & Std                & $\CC$                          & Error                  & Std     \\ \hline
				2   & $2^2$  & $\lceil 2^{\frac{17}{77}} \rceil$ & 77  & $4.42\cdot 10^{-2}$	&  $4.10\cdot 10^{-3}$   & 93	&$4.43\cdot 10^{-2}$	&  $4.16\cdot 10^{-3}$   \\ \hline
				4   & $2^4$   & $\lceil 2^{\frac{34}{77}} \rceil$   & 556	& $3.56\cdot 10^{-2}$ 	&  $2.08\cdot 10^{-3}$	& 940  &$3.56\cdot 10^{-2}$ 	&  $2.08\cdot 10^{-3}$  \\ \hline		
				8   & $2^6$   & $\lceil 2^{\frac{51}{77}} \rceil$   & 4137	& $2.13\cdot 10^{-2}$	&  $8.73\cdot 10^{-4}$   & 11305  &$2.13\cdot 10^{-2}$	&  $8.61\cdot 10^{-4}$  \\ \hline
				16  & $2^8$   & $\lceil 2^{\frac{68}{77}} \rceil$    & 31314 & $8.66\cdot 10^{-3}$ 	&  $5.30\cdot 10^{-4}$ 	& 154194  &$8.66\cdot 10^{-3}$ 	&  $5.33\cdot 10^{-4}$   \\ \hline
				32  & $2^{10}$  & $\lceil 2^{\frac{85}{77}} \rceil$   & 342791 & $3.10\cdot 10^{-3}$	&  $3.08\cdot 10^{-4}$	& 3390215 &$3.09\cdot 10^{-3}$	&  $3.07\cdot 10^{-4}$   \\ \hline
			\end{tabular}
			\quad \\[0.2cm]
			\begin{tabular}{|p{0.6cm}|p{0.9cm}|p{0.9cm}||p{1.6cm}|p{1.6cm}|p{1.6cm}|}\hline
				\multicolumn{3}{|c||}{} &\multicolumn{3}{|c|}{$\EES$ scheme}   \\ \hline
				$N$    & $M$       & $K$                & $\CC$                 & Error                    & Std  \\ 
				\hline
				2    &$\lceil 2^{\frac{17}{6}} \rceil$   & $\lceil 2^{\frac{17}{77}} \rceil$    &  64		&$3.48\cdot 10^{-2}$ 	   &  $4.07\cdot 10^{-3}$  \\ \hline
				4    &$\lceil 2^{\frac{17}{3}} \rceil$   & $\lceil 2^{\frac{34}{77}} \rceil$   &  714		&$3.70\cdot 10^{-2}$ 	   &  $1.43\cdot 10^{-3}$ \\ \hline		
				8    &$\lceil 2^{\frac{17}{2}} \rceil$ & $\lceil 2^{\frac{51}{77}} \rceil$   &  9438 	&$2.22\cdot 10^{-2}$ 	   &  $9.95\cdot 10^{-4}$ \\ \hline
				16   &$\lceil 2^{\frac{34}{3}} \rceil$ & $\lceil 2^{\frac{68}{77}} \rceil$    &  129050 		&$9.07\cdot 10^{-3}$    &  $5.73\cdot 10^{-4}$\\ \hline
				32   &$\lceil 2^{\frac{85}{6}} \rceil$ & $\lceil 2^{\frac{85}{77}} \rceil$   &  2409221 		&$3.24\cdot 10^{-3}$	   &  $3.44\cdot 10^{-4}$ \\ \hline
			\end{tabular}
		\end{center}
	\end{small}
	\caption[Error and standard deviation for Example~2]{Error and standard deviation for
		Example~2 obtained from 500 paths. The computational cost $\CC$ is computed as
			$\CC(\DFMA) = MN+2MNK+MK(1+2M^{2q-1})$, $\CC(\text{MIL1}) = MN+MNK+MN^2K+MK(1+2M^{2q-1})$,
			and $\CC(\EES) =  MN+MNK+MK$.}
	\label{Tab:NC2}
\end{table}
\begin{figure}[H]
	\begin{center}
		\includegraphics[height = 6cm, width = 0.49\textwidth]{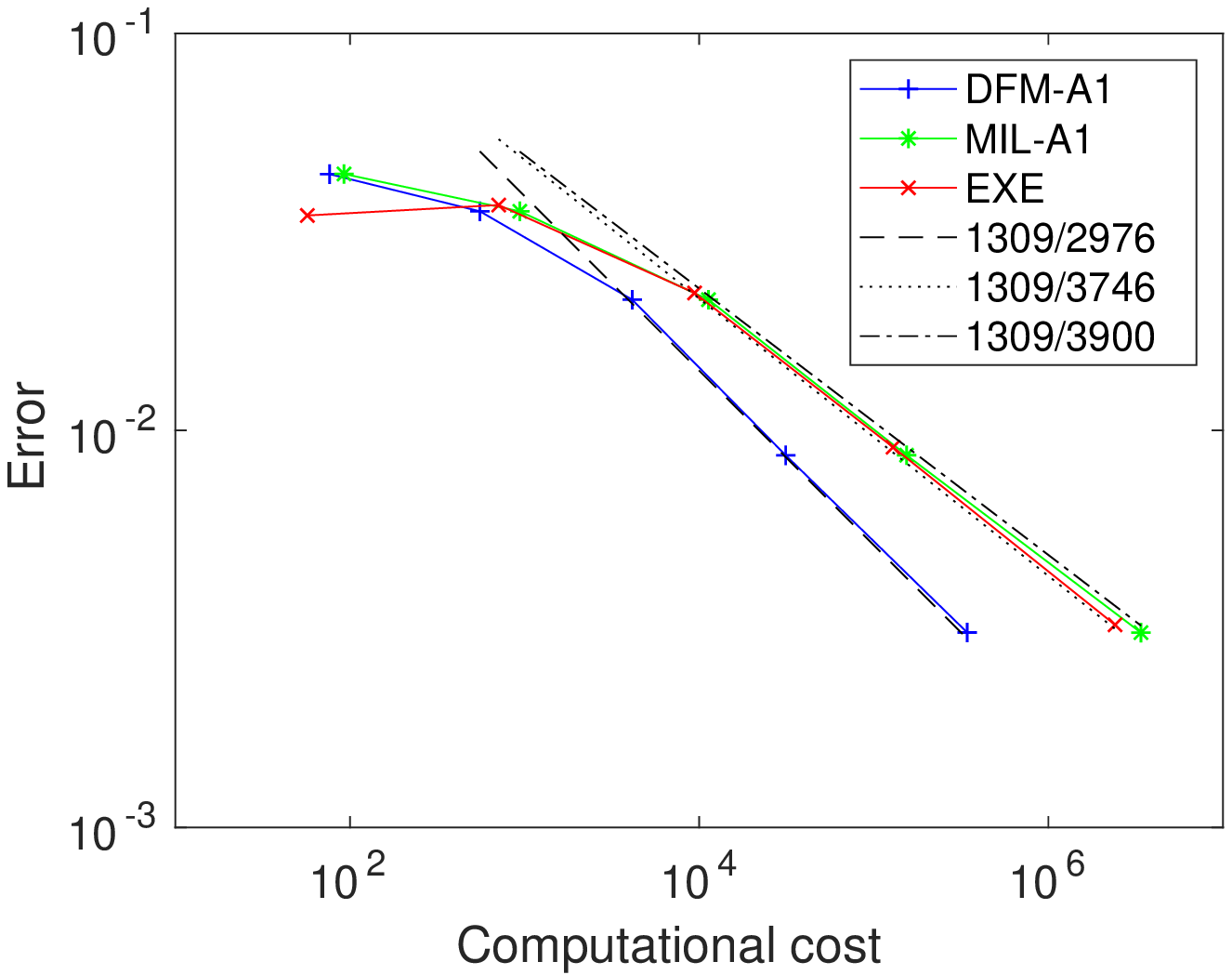}
		\caption[Error against computational cost for Example~2]{Error against computational cost
			for Example~2 computed from 500 paths for $N \in\{ 2, 4, 8, 16, 32\}$ in log-log scale.}
		\label{Plot:NC2}
	\end{center}
\end{figure}
\subsection{Example 3} \label{Sub-Sec-Example-3}
Compared to the first two examples, we choose a different nonlinearity 
for the third example in order to obtain restrictions for the parameter $\beta$.
Therefore, we consider the mapping $F \colon H_{\beta} \to H$ given by 
\begin{align*}
F(v) = \sum_{i \in \mathcal{I}} f_i(v) \, e_i
\end{align*}
for $v \in H_{\beta}$ with some $f_i \colon H_{\beta} \to \mathbb{R}$ 
for $i \in \mathcal{I}$. In this example,
we choose $f_i(v) = i^{-s} \sin(i^r \langle v,e_i \rangle_H )$ for $v \in H_{\beta}$,
$s> \tfrac{1}{2}$, $r \leq \min (s, 2 \beta + \tfrac{s}{2} )$ and $i \in \mathcal{I}$. Then, we get
\begin{align*}
	\| F(v) \|_H^2 &= \sum_{i \in \mathcal{I}} |f_i(v) |^2 
	= \sum_{i \in \mathcal{I}} \frac{| \sin(i^r \langle v,e_i \rangle_H) |^2}{i^{2s}}
	< \infty .
\end{align*}
Further, $F$ is twice continuously Fr\'{e}chet differentiable and it holds
\begin{align*}	
	\sup_{v \in H_{\beta}} \| F'(v) \|_{L(H)}^2 &= \sup_{v \in H_{\beta}} 
	\sup_{\substack{u \in H \\ \| u \|_H = 1}} \sum_{i \in \mathcal{I}} 
	\Big| \sum_{k \in \mathcal{I}} \frac{\partial f_i}{\partial v_k}(v) \, 
	\langle u,e_k \rangle_H \Big|^2 \\
	&= \sup_{v \in H_{\beta}} \sup_{\substack{u \in H \\ \| u \|_H = 1}} \sum_{i \in \mathcal{I}}
	i^{2(r-s)} | \cos(i^r \langle v,e_i \rangle_H ) |^2 \, | \langle u,e_i \rangle_H |^2 \\
	&\leq \sup_{\substack{u \in H \\ \| u \|_H = 1}} \sum_{i \in \mathcal{I}} | \langle u,e_i \rangle_H |^2 
	= 1,
\end{align*}
because $r \leq s$. 
Further, considering the second Fr\'{e}chet derivative, we get
\begin{align*}
	\sup_{v \in H_{\beta}} \| F''(v) \|_{L^{(2)}(H_{\beta},H)}^2 &= \sup_{v \in H_{\beta}}
	\sup_{\substack{u,w \in H_{\beta} \\ \| u \|_{H_{\beta}} = \| w \|_{H_{\beta}} = 1 }}
	\sum_{i \in \mathcal{I}}  \Big| \sum_{k,l \in \mathcal{I}} \frac{\partial^2 f_i}{\partial v_k 
	\partial v_l}(v) \, \langle u,e_k \rangle_H \, \langle w,e_l \rangle_H \Big|^2 \\
	&= \sup_{v \in H_{\beta}} \sup_{\substack{u,w \in H_{\beta} \\ \| u \|_{H_{\beta}} = \| w \|_{H_{\beta}} = 1 }}
	\sum_{i \in \mathcal{I}} i^{4r-2s} | \sin(i^r \langle v,e_i \rangle_H ) |^2
	| \langle u,e_i \rangle_H |^2 | \langle w,e_i \rangle_H |^2 \\
	&\leq \sup_{\substack{u,w \in H_{\beta} \\ \| u \|_{H_{\beta}} = \| w \|_{H_{\beta}} = 1 }}
	\sum_{i \in \mathcal{I}} i^{4r-2s} | \langle u,e_i \rangle_H |^2 | \langle w,e_i \rangle_H |^2 \\
	&\leq \frac{100^2}{\pi^4} \sup_{\substack{u \in H_{\beta} \\ \| u \|_{H_{\beta}} = 1 }}
	\sum_{i \in \mathcal{I}} i^{4r-2s-4 \beta} | \langle u,e_i \rangle_H |^2 \\
	&\leq \frac{100^4}{\pi^8} \sup_{\substack{u \in H_{\beta} \\ \| u \|_{H_{\beta}} = 1 }} \| u \|_{H_{\beta}}^2
	= \frac{100^4}{\pi^8} < \infty,
\end{align*}
since $\| z \|_{H_{\beta}}^2 = \tfrac{\pi^4}{100^2} \sum_{i \in \mathcal{I}} i^{4 \beta}
| \langle z,e_i \rangle_H |^2$ for any $z \in H_{\beta}$ and because 
$r \leq \min (s, 2 \beta + \tfrac{s}{2} )$. Thus, assumption (A2) is fulfilled. \\ \\
Again, we choose $\rho_Q = 3$.
Moreover, we select $r=s=\tfrac{7}{2}$ in the definition of $F$ and $p=4$ in the 
definition of the operator $B$.
As the initial value, we choose $X_0 = \xi \in H$ with $\langle \xi(x),e_i \rangle_H 
= i^{-2}$ for 
$x\in(0,1)$ and $i \in \mathcal{I}$. First, we calculate $\beta \in [\tfrac{7}{8},1)$
from the condition $r \leq \min ( s, 2 \beta + \tfrac{s}{2} )$. Therefore, we
choose $\beta = \tfrac{7}{8}$ minimal possible. Analogously to Example~1,
we derive $\delta, \vartheta \in (0,\tfrac{1}{2})$ and choose $\delta=\tfrac{1}{2}
-\varepsilon_{\delta}$ and $\vartheta = \tfrac{1}{2} -\varepsilon_{\vartheta}$
for arbitrarily small $\varepsilon_{\delta}, \varepsilon_{\vartheta} >0$. Then, we
choose $\gamma \in [ \frac{7}{8}, 1 -\varepsilon_{\delta})$ maximal,
i.e., we choose $\gamma = 1 -\varepsilon_{\delta} -\varepsilon_{\gamma}$
for arbitrarily small $\varepsilon_{\gamma}>0$. Let $\hat{\varepsilon} = 
\varepsilon_{\delta} + \varepsilon_{\gamma} >0$ be arbitrarily small. It follows 
that $q=q_{\DFMA} = q_{\MILA} = \tfrac{1}{4} -2 \hat{\varepsilon}$.
Finally, we calculate that $\alpha \in (0, \tfrac{7}{3} -\tfrac{2}{3} \hat{\varepsilon})$ and
we set $\alpha = \tfrac{7}{3} - \varepsilon_{\alpha}$ for some arbitrarily small
$\varepsilon_{\alpha} > \tfrac{2}{3} \hat{\varepsilon} >0$. \\ \\
Since we have $q \leq \tfrac{1}{2}$, the optimal schemes are the $\EES$ 
scheme and the $\DFMA$ scheme, both attaining the same effective 
order of convergence for this example, see Table~\ref{Tab:CompareEffOrder}.
Taking into account all parameters, we get from \eqref{StandardEffOrd-DFMC2}
and \eqref{EESEffOrd} that
\begin{align*}
	\text{error}(\EES / \DFMA) 
	= \Oo \Big( \bar{c}^{-\frac{\frac{7}{2} -33 \hat{\varepsilon} 
		+ \frac{83}{2} \hat{\varepsilon}^2 -12 \hat{\varepsilon}^3}{\frac{65}{4} 
		-\frac{65}{2} \hat{\varepsilon} -\frac{27}{4} \varepsilon_{\alpha} 
		+ 12 \varepsilon_{\alpha} \hat{\varepsilon} + 4 \hat{\varepsilon}^2}} \Big) ,
\end{align*}
i.e., for the effective order of convergence it holds that
$\text{EOC}(\EES) = \text{EOC}(\DFMA) \approx \tfrac{14}{65}$.
For some arbitrarily fixed $N \in \mathbb{N}$, we obtain for the 
$\EES$ scheme as well as for the $\DFMA$ scheme from \eqref{Choice-MNK-DFMC2} that
$M = \big\lceil N^{\frac{2 -2 \hat{\varepsilon}}{\frac{1}{4} -2 \hat{\varepsilon}}}
\big\rceil \approx N^8$ and 
$K = \big\lceil N^{\frac{2 -2 \hat{\varepsilon}}{7 -3 \varepsilon_{\alpha}}}
\big\rceil \approx \lceil N^{\frac{2}{7}} \rceil$ as the optimal choice. In this case, the computation of the double integrals is not expensive as it holds that $D \geq D_1 = M^{-\frac{1}{2}-\varepsilon}$ for some $\varepsilon >0$ such that $D=1$ can be fixed or it can even be neglected.
On the other hand, for the $\MILA$ scheme, we compute from 
\eqref{EffOrd-MIL1-C2} that
\begin{align*}
	\text{error}(\MILA) 
	= \Oo \Big( \bar{c}^{-\frac{\frac{7}{2} -33 \hat{\varepsilon} 
		+ \frac{83}{2} \hat{\varepsilon}^2 -12 \hat{\varepsilon}^3}{18 
		-\frac{93}{2} \hat{\varepsilon} -\frac{15}{2} \varepsilon_{\alpha} 
		+ 18 \varepsilon_{\alpha} \hat{\varepsilon} + 4 \hat{\varepsilon}^2}} \Big) ,
\end{align*}
which gives us the effective order of convergence 
$\text{EOC}(\MILA) \approx \tfrac{7}{36}$. Moreover, the optimal
choice for $M$ and $K$ given some $N \in \mathbb{N}$ can
be calculated from \eqref{Choice-MNK-MIL1-C2} to be exactly the 
same as for the $\EES$ scheme and the $\DFMA$ scheme. 
For this example, we have $\text{EOC}(\MILA) < \text{EOC}(\EES) 
= \text{EOC}(\DFMA)$. 
For this example , the computational effort involved in computing a convergence
plot is very high due to the relation $M\approx N^8$. Therefore, we do not 
present a convergence plot for this setting.
\\ \\
The examples presented above confirm the theoretical 
analysis that we conducted in Section~\ref{Sec:EffOrder}.
The numerical experiments show that the derivative-free Milstein type scheme for equations
with non-commutative noise defined in~\eqref{DFM},
in combination with 
Algorithm~1, has always at least the same and it in many 
cases an even higher effective order of convergence
compared to the exponential Euler scheme and the original
Milstein scheme. 
\section{Conclusion}\label{Sec:Conclusion}
We proposed the derivative-free Milstein type scheme $\DFM$ for the approximation 
of the mild solution of SPDEs that need not fulfill a commutativity condition for the noise
and we proved an upper bound for the mean-square error.
As the main novelty, the introduced $\DFM$ scheme is derivative-free and has computational 
cost $\Oo(N K M)$ which is of the same magnitude as for the Euler schemes $\EES$ and $\LIE$. 
This is a significant reduction of the 
computational complexity compared to the original Milstein scheme $\MIL$ that is not derivative-free
and which has computational cost $\Oo(N^2 K M)$.
In addition, the convergence of the $\DFM$ method is proved if it is combined with any suitable 
simulation method for the iterated stochastic integrals. 
As an example, the effective order of convergence of the $\DFM$ scheme combined with Algorithm~1 
in \cite{MR3949104} for the simulation of the iterated stochastic integrals is analyzed in detail. 
For Algorithm~1, the effective order of convergence of the $\DFM$ scheme is at least that for 
the Euler schemes or the Milstein scheme $\MIL$ 
and turns out to be even significantly higher for many parameter settings depending 
on the specific SPDE to be approximated. Thus, in many cases the proposed $\DFM$ scheme outperforms 
the Euler schemes as well as the original Milstein scheme.
\\ \\ 
The maximal possible effective 
order of convergence that can be attained by the $\DFM$ scheme combined with Algorithm~1 is bounded 
by $1/2$, which is in accordance with 
the upper bound for the order of strong convergence in case of finite-dimensional SDEs if Algorithm~1
is applied, see also \cite{MR609181}. 
However, in contrast to the finite-dimensional SDEs setting, for SPDEs the Euler schemes 
often attain some effective order of convergence less than $1/2$. This gap in the
order of convergence for the Euler schemes is the reason why the use of higher order approximation 
methods can be reasonable and which is in strong contrast to the finite-dimensional SDE setting. 
To the best of the authors knowledge, this is the first attempt to give a rigorous analysis of the 
error versus computational cost for higher order approximation methods applied to SPDEs 
without any commutativity condition where the computational cost for the approximation of 
iterated stochastic integrals is incorporated within the framework of a cost model.
It remains an open question whether 
the application of higher order numerical methods that incorporate further iterated stochastic
integrals from the stochastic Taylor expansion may close the gap for the order of convergence 
to the upper bound of $1/2$ if, e.g., naive approximations like Algorithm~1
are applied for the approximation of these iterated stochastic integrals. 
As a result of this, higher order approximation methods may be of strong interest, especially 
in the case of SPDEs. On the other hand, it may be possible to overcome the upper bound of 
$1/2$ for the order of convergence if some more sophisticated algorithm for the simulation 
of the iterated stochastic integrals is combined with the $\DFM$ scheme, see e.g., Algorithm~2 in 
\cite{MR3949104}. 
\section{Proofs}\label{Proof}\label{Sec:Proofs} 
Here, we give the proof of the convergence 
result for the derivative-free Milstein scheme~\eqref{DFM}
as stated in Theorem~\ref{Error:DFM}. Moreover, we prove the estimate 
given in Theorem~\ref{Thm:ErrorTotal}
which incorporates the approximation of the 
stochastic double integrals additionally.
In the following, we always denote $Y_m = Y_m^{N,K,M}$ for simplicity
and let $I_{(i,j),l} = (\eta_i \eta_j)^{-\frac{1}{2}} I^Q_{(i,j),l}$.
Attention should be paid to the fact that for ease of notation 
the constants in our proofs may differ from line to line even though their denomination is not changed.
We need the following estimate on the moments of the approximation process
$(Y_m)_{m\in\{0,\ldots,M\}}$ for the proof of Theorem~\ref{Error:DFM}.
Note that, without loss of generality, we present the proofs with an equidistant time step $h=h_m$ for all $m\in\{0,\ldots,M-1\}$.
\begin{lma}\label{Proof:Lemma-Moment-nonComm}
	Assume that (A1)--(A4) and (A5) hold. Then, it holds that
	\begin{equation*}
	\sup_{m\in\{0,\ldots,M\}}\big(\mathrm{E}\big[\|Y_m\|_{H_{\delta}}^p\big]\big)^{\frac{1}{p}}
	\leq C_{p,Q,T,\delta} \big(1+\big(\mathrm{E}\big[\|X_0\|_{H_{\delta}}^p\big]\big)^{\frac{1}{p}}\big)
	\end{equation*}
	for all $p \in [2,\infty)$ in case of (A5\ref{A5a}) and for $p=2$ in case of (A5\ref{A5c})
	for some arbitrary
	$N,K,M \in \mathbb{N}$ and some constant $C_{p,Q,T,\delta} >0$ independent of $N$, $K$ and $M$.
\end{lma}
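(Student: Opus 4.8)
The plan is to derive the bound from a discrete variation-of-constants representation of the scheme and to close it with a (weakly singular) discrete Gronwall inequality. Since $P_N$ and $e^{Ah}$ are both functions of $A$ and therefore commute, iterating \eqref{DFM} yields for every $m$
\[
  Y_m = P_N e^{At_m} Y_0 + \sum_{l=0}^{m-1} P_N e^{A(t_m-t_l)}\Big( h F(Y_l) + B(Y_l)\,\Delta W_l^{K,M} + R_l \Big),
\]
where $R_l = \sum_{j\in\mathcal{J}_K}\big(B(Y_l+\Theta_l^j)-B(Y_l)\big)\tilde e_j$ collects the difference terms with $\Theta_l^j = \sum_{i\in\mathcal{J}_K} P_N B(Y_l)\tilde e_i\, I^Q_{(i,j),l}$. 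I would take the $L^p(\Omega;H_\delta)$-norm, split the four summands by Minkowski's inequality, and estimate each using the smoothing property $\|(-A)^\delta P_N e^{At}\|_{L(H)}\le C t^{-\delta}$ together with the contractivity $\|e^{At}\|_{L(H_\delta)}\le C$ of the analytic semigroup.

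The initial term is immediate, $\|P_Ne^{At_m}Y_0\|_{H_\delta}\le\|\xi\|_{H_\delta}$. For the drift I would use that (A2) makes $F$ globally Lipschitz on $H$, hence of linear growth, so that $\|P_N e^{A(t_m-t_l)}F(Y_l)\|_{H_\delta}\le C(t_m-t_l)^{-\delta}(1+\|Y_l\|_H)$, and the weights satisfy $h\sum_{l=0}^{m-1}(t_m-t_l)^{-\delta}\le C T^{1-\delta}$ because $\delta<1$. The diffusion term is a discrete Hilbert-space martingale; after applying the Burkholder-Davis-Gundy inequality I would bound $\|P_N e^{A(t_m-t_l)}B(Y_l)\|_{L_{HS}(U_0,H_\delta)}\le C(1+\|Y_l\|_{H_\delta})$ using the contractivity of $e^{At}$ on $H_\delta$, the finite trace of $Q$, and the operator-norm linear growth $\|B(u)\|_{L(U,H_\delta)}\le C(1+\|u\|_{H_\delta})$ from (A3), via $\|T\|_{L_{HS}(U_0,H_\delta)}^2 \le \tr(Q)\,\|T\|_{L(U,H_\delta)}^2$. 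No singular weight is needed here.

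The genuinely new and hardest part is the correction term $R_l$, where assumption (A5) enters and the $p$-dichotomy originates. I would expand $B(Y_l+\Theta_l^j)-B(Y_l)$ by Taylor's theorem into a first-order part $\sum_{i,j} I^Q_{(i,j),l}\,B'(Y_l)(P_N B(Y_l)\tilde e_i,\tilde e_j)$ and a second-order remainder $\tfrac12\sum_j B''(\zeta_l^j)(\Theta_l^j,\Theta_l^j)\tilde e_j$ with an intermediate point $\zeta_l^j=Y_l+\tau\Theta_l^j$, $\tau\in(0,1)$. The first-order part coincides with the Milstein double-integral term and is conditionally centered given $\mathcal F_{t_l}$, so it again forms a martingale amenable to BDG; measuring it in $H$ and recovering $H_\delta$ through the factor $(t_m-t_l)^{-\delta}$, its size is governed by $\|B'(Y_l)P_N B(Y_l)\|_{L_{HS}^{(2)}(U_0,H)}\le C(1+\|Y_l\|_H)$, which follows from the Lipschitz estimate on $B'PB$ in (A3), while $\delta<\tfrac12$ guarantees that the squared smoothing weights $\sum_l h^2(t_m-t_l)^{-2\delta}$ remain summable. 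The decisive difficulty is to bound the second-order remainder uniformly in the truncation index $K$, i.e.\ to control $\sum_{j\in\mathcal J_K}\|\Theta_l^j\|_H^2$-type quantities whose summation over the noise modes must not blow up as $K\to\infty$. This is exactly where (A5) is used: under (A5\ref{A5a}) the trace-class property of $Q^{1/2}$ renders $\sum_j\sqrt{\eta_j}$ finite and yields, for the exact integrals, moment bounds of type \eqref{Cond2:DI-A5a} for every $p\in[2,\infty)$; under (A5\ref{A5c}) the growth bound on $B''(v)(PB(u),PB(u))$ combined with the $L^2$-type moment bounds of type \eqref{Cond2:DI-A5c} (available for $q\in\{2,3\}$) closes the estimate, but only for $p=2$.

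Collecting the four estimates and writing $a_m:=(\mathrm E[\|Y_m\|_{H_\delta}^p])^{1/p}$, I would obtain a recursion $a_m\le C(1+a_0)+C\sum_{l=0}^{m-1}\varphi(t_m-t_l)\,a_l$ with a weakly singular, integrable kernel $\varphi$; the proof then concludes by a discrete Gronwall lemma adapted to such kernels, uniform in $N,K,M$. I expect the main obstacle to be precisely the uniform-in-$K$ control of the second-order remainder of $R_l$, and the corresponding verification that (A5) supplies exactly the summability needed -- all moments in case (A5\ref{A5a}) and the second moment in case (A5\ref{A5c}).
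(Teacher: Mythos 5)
Your overall skeleton (discrete variation of constants, Minkowski, the smoothing estimate $\|(-A)^{\delta}e^{At}\|_{L(H)}\leq Ct^{-\delta}$, Burkholder--Davis--Gundy for the diffusion sum, a Darboux-sum bound on $\sum_l (m-l)^{-2\delta}$ and a discrete Gronwall argument) is exactly the paper's, and your treatment of the drift, diffusion and initial terms as well as of case (A5\ref{A5c}) at $p=2$ is essentially correct. The gap is in the correction term under (A5\ref{A5a}). You expand $B(Y_l+\Theta_l^j)-B(Y_l)$ to \emph{second} order in both cases and claim that under (A5\ref{A5a}) the remainder is handled because $\sum_j\sqrt{\eta_j}<\infty$ gives moment bounds on the iterated integrals for all $p$. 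But the obstruction for the remainder is not the summability of the integrals: with only (A3) available, the bound $\|B''(\zeta_l^j)(\Theta_l^j,\Theta_l^j)\tilde e_j\|_H\leq C\|\Theta_l^j\|_H^2\leq C\|B(Y_l)\|_{L(U,H)}^2\sum_i (I^Q_{(i,j),l})^2$ is \emph{quadratic} in $B(Y_l)$, so its $L^{p}$-norm requires $\mathrm{E}[\|Y_l\|_{H_\delta}^{2p}]$, and the recursion $a_m\leq C(1+a_0)+C\sum_l\varphi(t_m-t_l)a_l$ in $a_m=(\mathrm{E}[\|Y_m\|_{H_\delta}^p])^{1/p}$ does not close at any fixed $p$. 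Condition (A5\ref{A5a}) gives you nothing to linearize $B''(v)(PB(u),PB(u))$ in $u$ --- that is precisely what (A5\ref{A5c}) provides, and why the paper can afford the second-order expansion only in that case (and only at $p=2$, where the explicit covariances $\mathrm{E}[I^Q_{(i_1,j_1)}I^Q_{(i_2,j_2)}]$ handle the first-order term).

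The fix, which is what the paper actually does in case (A5\ref{A5a}), is to stop at the \emph{first-order mean-value form} $B(Y_l+\Theta_l^j)-B(Y_l)=\int_0^1 B'(\xi(Y_l,j,u))\,\Theta_l^j\,\mathrm{d}u$ and use $\sup_v\|B'(v)\|<\infty$ from (A3): the whole correction term is then linear in $B(Y_l)$, so $p$-th moments of $Y_l$ suffice and the induction closes for every $p\in[2,\infty)$. The price is that one gives up the conditional centering you wanted to exploit and must bound $\sum_{j\in\mathcal{J}_K}\|\cdot\|_H$ crudely, which produces the factor $\sum_{i,j\in\mathcal{J}_K}\sqrt{\eta_i\eta_j}=(\sum_j\sqrt{\eta_j})^2$ --- and \emph{this} is where the trace-class property of $Q^{1/2}$ in (A5\ref{A5a}) is genuinely needed (finiteness of $\tr Q$ alone would not do). So (A5\ref{A5a}) enters through the non-centered first-order bound, not through the second-order remainder; as written, your argument either fails to close the moment recursion (if you keep the second-order expansion) or never uses (A5\ref{A5a}) where it is required.
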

\begin{proof}[Proof of Lemma~\ref{Proof:Lemma-Moment-nonComm}]\quad \\
	We conduct the proof of this lemma iteratively. Fix some $N,K,M \in \mathbb{N}$ and let $p\in[2,\infty)$. 
	The statement obviously holds for $m=0$. Then, for some $m\in\{1,\ldots,M\}$, we 
	assume that the statement is true for all $Y_l$ with $l \in \{0,\ldots,m-1\}$. 
	% \\ \\
	%
	%
	By the triangle inequality, we get 
	\begin{align*}
	& \big( \mathrm{E}\big[\| Y_m\|_{H_{\delta}}^p\big]\big)^{\frac{2}{p}}\\
	&\leq \Bigg( C\big(\mathrm{E}\big[\|X_0\|_{H_{\delta}}^p\big]\big)^{\frac{1}{p}}
	+ \su \bigg(\mathrm{E}\bigg[\Big\|\I e^{A(t_m-t_l)}F(Y_l)\,
	\mathrm{d}s\Big\|_{H_{\delta}}^p\bigg]\bigg)^{\frac{1}{p}} \\
	&\quad + \bigg(\mathrm{E}\bigg[\Big\| \int_{t_0}^{t_m} \su e^{A(t_m-t_l)}B(Y_l)
	\mathds{1}_{[t_l,t_{l+1})}(s)\, \mathrm{d}W^K_s\Big\|_{H_{\delta}}^p\bigg]\bigg)^{\frac{1}{p}}\\
	&\quad  +  \bigg(\mathrm{E}\bigg[\Big\|\su e^{A(t_m-t_l)}  \sum_{j\in\mathcal{J}_K}
	\Big(B\big(Y_l +\sum_{i\in\mathcal{J}_K}P_NB(Y_l)\tilde{e}_iI_{(i,j),l}^Q\big)
	\tilde{e}_j-B(Y_l)\tilde{e}_j\Big)  \Big\|_{H_{\delta}}^p\bigg]\bigg)^{\frac{1}{p}}\Bigg)^2.
	\end{align*}
	Case~1: Assume that assumption (A5\ref{A5a}) is fulfilled.
	We estimate the individual terms by a Burkholder-Davis-Gundy type 
	inequality~\cite[Theorem 4.37]{MR3236753}, 
	and a Taylor expansion of the difference approximation.
	Precisely, we use 
	\begin{align}\label{Taylor}
	B\Big(Y_l +\sum_{i\in\mathcal{J}_K}P_NB(Y_l)\tilde{e}_iI_{(i,j),l}^Q\Big) = 
	B(Y_l) + \int_0^1B'(\xi(Y_l,j,u))
	\sum_{i\in\mathcal{J}_K}P_NB(Y_l)\tilde{e}_iI_{(i,j),l}^Q\,\mathrm{d}u
	\end{align}
	for some $\xi(Y_l,j,u)= Y_l+u\sum_{i\in\mathcal{J}_K}P_NB(Y_l)
	\tilde{e}_iI_{(i,j),l}^Q \in H_{\beta}$, 
	$l\in\{0,\ldots,m-1\}$, $j\in\mathcal{J}_K$, $u\in[0,1]$.
	Therewith, we get 
	\begin{align*}
	&\big(\mathrm{E}\big[\| Y_m\|_{H_{\delta}}^p\big]\big)^{\frac{2}{p}} \\
	&\leq  C_p \Bigg( \big(\mathrm{E}\big[\|X_0\|_{H_{\delta}}^p\big]\big)^{\frac{2}{p}}
	+\bigg(\su \Big( \mathrm{E}\Big[\Big(\I\big\|(-A)^{\delta}e^{A(t_m-t_l)}F(Y_l)\big\|_{H}\,
	\mathrm{d}s\Big)^p\Big]\Big)^{\frac{1}{p}}\bigg)^2\\
	&\quad + \int_{t_0}^{t_m} \bigg(\mathrm{E}\bigg[\Big\|
	\su e^{A(t_m-t_l)}B(Y_l)\mathds{1}_{[t_l,t_{l+1})}(s)\Big\|_{L_{HS}(U_0,H_{\delta})}^p
	\bigg]\bigg)^{\frac{2}{p}} \, \mathrm{d}s \\
	&\quad  +\bigg( \mathrm{E}\bigg[\Big\| \su (-A)^{\delta}e^{A(t_m-t_l)}
	\sum_{j\in\mathcal{J}_K}\int_0^1 B'(\xi(Y_l,j,u))\Big(\sum_{i\in\mathcal{J}_K} P_N B(Y_l)
	\tilde{e}_i I_{(i,j),l}^Q, \tilde{e}_j\Big)\,\mathrm{d}u\Big\|_H^p\bigg]\bigg)^{\frac{2}{p}} \Bigg).
	\end{align*}
	The estimates on the analytic semigroup, see Lemma~6.3 and~6.13 in~\cite[Ch.2]{MR710486}, and assumptions
	(A2), (A3), yield
	\begin{align*}
	&\big(\mathrm{E}\big[\| Y_m\|_{H_{\delta}}^p\big])\big)^{\frac{2}{p}}\\
	&\leq C_p \big( \mathrm{E} \big[ \| X_0 \|_{H_{\delta}}^p \big] \big)^{\frac{2}{p}}
	+ C_{p,\delta} M \su \Big(h^p(t_m-t_l)^{-\delta p}\Big)^{\frac{2}{p}}
	\big( \mathrm{E}\big[\|F(Y_l)\|_{H}^p\big]\big)^{\frac{2}{p}} \\
	&\quad + C_p \su \int_{t_l}^{t_{l+1}}
	\bigg( \mathrm{E} \bigg[ \Big\| \sum_{k=0}^{m-1} e^{A(t_m-t_k)} B(Y_k)
	\mathds{1}_{[t_k,t_{k+1})}(s) \Big\|_{L_{HS}(U_0,H_{\delta})}^p \bigg] \bigg)^{\frac{2}{p}} \,
	\mathrm{d}s \\
	&\quad + C_{p,\delta} M \su  (t_m-t_l)^{-2\delta} \bigg(\mathrm{E}\bigg[\Big\|\sum_{j\in\mathcal{J}_K}
	\int_0^1 B'(\xi(Y_l,j,u)) \Big( \sum_{i \in \mathcal{J}_K} P_N B(Y_l)
	\tilde{e}_i I_{(i,j),l}^Q,\tilde{e}_j\Big)\,\mathrm{d}u\Big\|_{H}^p\bigg]\bigg)^{\frac{2}{p}} \\
	&\leq  C_p \big(\mathrm{E}\big[\|X_0\|_{H_{\delta}}^p\big]\big)^{\frac{2}{p}} 
	+ C_{p,T,\delta} h  \su (t_m-t_l)^{-2\delta}
	\big(1+\big(\mathrm{E}\big[\|Y_l\|_{H_{\delta}}^p\big]\big)^{\frac{2}{p}}\big) \\
	&\quad + C_p \su  \Big(\mathrm{E}\big[ \|B(Y_l)\|_{L_{HS}(U_0,H_{\delta})}^p\big]\Big)^{\frac{2}{p}}
	\int_{t_l}^{t_{l+1}}  \big\|(-A)^{-\delta}\big\|_{L(H)}^2
	\big\|(-A)^{\delta}e^{A(t_m-t_l)}\big\|_{L(H)}^2 \, \mathrm{d}s \\
	&\quad + C_{p,\delta} M \su  (t_m-t_l)^{-2\delta} \\
	&\quad \times \bigg( \sum_{j\in\mathcal{J}_K}
	\Big( \mathrm{E} \Big[ \Big( \int_0^1 \big\| B'(\xi(Y_l,j,u)) \big\|_{L(H,L(U,H))} \, \mathrm{d}u \Big)^p 
	\Big\| B(Y_l) \sum_{i \in \mathcal{J}_K} I_{(i,j),l}^Q\tilde{e}_i\Big\|_H^p\Big]\Big)^{\frac{1}{p}}\bigg)^2 \\
	&\leq C_p \big(\mathrm{E}\big[\|X_0\|_{H_{\delta}}^p\big]\big)^{\frac{2}{p}}
	+ h^{1-2\delta} C_{p,T,\delta} \su (m-l)^{-2\delta}
	\big(1+\big(\mathrm{E}\big[\|Y_l\|_{H_{\delta}}^p\big]\big)^{\frac{2}{p}}\big) \\
	&\quad + C_{p,\delta} \su h (t_m-t_l)^{-2\delta}
	\Big( \mathrm{E}\big[\| B(Y_l) \|_{L_{HS}(U_0,H_{\delta})}^p\big]\Big)^{\frac{2}{p}} \\
	&\quad + C_{p,\delta} M h^{-2\delta}  \su (m-l)^{-2\delta}\bigg( \sum_{j\in\mathcal{J}_K}
	\Big(\mathrm{E}\big[\|B(Y_l)\|^p_{L(U,H_{\delta})}\big]\Big)^{\frac{1}{p}}
	\big(\mathrm{E}\big[ \big(\sum_{i\in\mathcal{J}_K}  
	\big(I_{(i,j),l}^Q\big)^2\big)^{\frac{p}{2}}\big]\big)^{\frac{1}{p}}
	\bigg)^2.
	\end{align*}
	This expression can further be simplified by
	the distributional properties of $I^Q_{(i,j)}$, $i,j\in\mathcal{J}_K$, 
	see~\cite{MR1214374}. 
	Therewith, we obtain 
	\begin{align*}
	\big( \mathrm{E} \big[ \| Y_m \|_{H_{\delta}}^p \big] \big)^{\frac{2}{p}} 
	&\leq C_p \big(\mathrm{E}\big[\|X_0\|_{H_{\delta}}^p\big]\big)^{\frac{2}{p}}
	+ C_{p,T,\delta} h^{1-2\delta}  \su (m-l)^{-2\delta}
	\big(1+\big(\mathrm{E}\big[\|Y_l\|_{H_{\delta}}^p\big]\big)^{\frac{2}{p}}\big)\\
	&\quad + C_{p,Q,\delta} h \su (t_m-t_l)^{-2\delta}
	\big(1+\big(\mathrm{E}\big[\|Y_l\|_{H_{\delta}}^p\big]\big)^{\frac{2}{p}}\big)\\
	&\quad + C_{p,\delta} M h^{-2\delta}  \su (m-l)^{-2\delta}
	\Big( \big(1+\mathrm{E}\big[\|Y_l\|_{H_{\delta}}^p\big]\big)^{\frac{1}{p}}
	\sum_{i,j\in\mathcal{J}_K} 
	\big(\mathrm{E}\big[ |I_{(i,j),l} 
	\sqrt{\eta_i}\sqrt{\eta_j} |^p \big] \big)^{\frac{1}{p}}\Big)^2\\
	&\leq C_p \big(\mathrm{E}\big[\|X_0\|_{H_{\delta}}^p\big]\big)^{\frac{2}{p}}
	+ C_{p,Q,T,\delta} h^{1-2\delta}  \su (m-l)^{-2\delta}
	\big(1+\big(\mathrm{E}\big[\|Y_l\|_{H_{\delta}}^p\big]\big)^{\frac{2}{p}}\big)\\
	&\quad + C_{p,\delta} M h^{-2\delta}  \su (m-l)^{-2\delta}
	\Big(  \big(1+\mathrm{E}\big[\|Y_l\|_{H_{\delta}}^p\big]\big)^{\frac{1}{p}}
	\sum_{i,j\in\mathcal{J}_K}\sqrt{\eta_i}\sqrt{\eta_j}\,h\Big)^2.%\\
	\end{align*}
	Case~2: Assume $p=2$ and that assumption (A5\ref{A5c}) is fulfilled.
	Again, we estimate the individual terms by a Burkholder-Davis-Gundy type 
	inequality~\cite[Theorem 4.37]{MR3236753}, 
	but a first order Taylor expansion of the difference approximation.
	Thus, we use 
	\begin{equation} \label{Proof-Lem-Case2-Taylor2}
	\begin{split}
	&B\Big(Y_l +\sum_{i\in\mathcal{J}_K}P_NB(Y_l)\tilde{e}_iI_{(i,j),l}^Q\Big) = 
	B(Y_l) + B'(Y_l) \sum_{i\in\mathcal{J}_K} P_N B(Y_l) \tilde{e}_i I_{(i,j),l}^Q \\
	&\quad \quad + \int_0^1 \int_0^u B''(\xi(Y_l,j,r))
	\Big( \sum_{i \in \mathcal{J}_K} P_N B(Y_l) \tilde{e}_i I_{(i,j),l}^Q ,
	\sum_{i \in \mathcal{J}_K} P_N B(Y_l) \tilde{e}_i I_{(i,j),l}^Q \Big) \, \mathrm{d}r \, \mathrm{d}u
	\end{split}
	\end{equation}
	for some $\xi(Y_l,j,r)= Y_l + r \sum_{i \in \mathcal{J}_K} P_N B(Y_l)
	\tilde{e}_i I_{(i,j),l}^Q \in H_{\beta}$, 
	$l\in\{0,\ldots,m-1\}$, $j\in\mathcal{J}_K$, $r\in[0,1]$.
	With estimates on the analytic semigroup, see Lemma~6.3 and 6.13 
	in~\cite[Ch.2]{MR710486}, we get that
	\begin{align*}
	&\big(\mathrm{E}\big[\| Y_m\|_{H_{\delta}}^p\big]\big)^{\frac{2}{p}} \\
	&\leq  C_p \Bigg( \big(\mathrm{E}\big[\|X_0\|_{H_{\delta}}^p\big]\big)^{\frac{2}{p}}
	+\bigg(\su \Big( \mathrm{E}\Big[\Big(\I\big\|(-A)^{\delta}e^{A(t_m-t_l)}F(Y_l)\big\|_{H}\,
	\mathrm{d}s\Big)^p\Big]\Big)^{\frac{1}{p}}\bigg)^2\\
	&\quad + \int_{t_0}^{t_m} \bigg(\mathrm{E}\bigg[\Big\|
	\su e^{A(t_m-t_l)}B(Y_l)\mathds{1}_{[t_l,t_{l+1})}(s)\Big\|_{L_{HS}(U_0,H_{\delta})}^p
	\bigg]\bigg)^{\frac{2}{p}} \, \mathrm{d}s \\
	&\quad + \bigg( \mathrm{E}\bigg[\Big\| \su (-A)^{\delta}e^{A(t_m-t_l)}
	\sum_{j \in \mathcal{J}_K} \Big( B'(Y_l) 
	\Big( \sum_{i\in\mathcal{J}_K} P_N B(Y_l) \tilde{e}_i I_{(i,j),l}^Q, \tilde{e}_j \Big) \\
	&\quad + \int_0^1 \int_0^u B''(\xi(Y_l,j,r))
	\Big( \sum_{i \in \mathcal{J}_K} P_N B(Y_l) \tilde{e}_i I_{(i,j),l}^Q ,
	\sum_{i \in \mathcal{J}_K} P_N B(Y_l) \tilde{e}_i I_{(i,j),l}^Q, \tilde{e}_j \Big) 
	\, \mathrm{d}r \, \mathrm{d}u \Big)
	\Big\|_H^p\bigg]\bigg)^{\frac{2}{p}} \Bigg) \\
	&\leq C_p \big(\mathrm{E}\big[\|X_0\|_{H_{\delta}}^p\big]\big)^{\frac{2}{p}}
	+ C_{p,\delta} M \su \Big(h^p(t_m-t_l)^{-\delta p}\Big)^{\frac{2}{p}}
	\big( \mathrm{E}\big[\|F(Y_l)\|_{H}^p\big]\big)^{\frac{2}{p}} \\
	&\quad + C_p \su \int_{t_l}^{t_{l+1}}
	\bigg(\mathrm{E}\bigg[\Big\|\sum_{k=0}^{m-1} e^{A(t_m-t_k)} B(Y_k)
	\mathds{1}_{[t_k,t_{k+1})}(s)\Big\|_{L_{HS}(U_0,H_{\delta})}^p\bigg]\bigg)^{\frac{2}{p}} \,
	\mathrm{d}s \\
	&\quad + C_{p,\delta} M \su  (t_m-t_l)^{-2\delta} \bigg( \mathrm{E}\bigg[\Big\|
	\sum_{i, j \in \mathcal{J}_K} I_{(i,j),l}^Q B'(Y_l) \Big( P_N B(Y_l) \tilde{e}_i, \tilde{e}_j \Big)
	\Big\|_H^p\bigg]\bigg)^{\frac{2}{p}} \\
	&\quad + C_{p,\delta} M \su  (t_m-t_l)^{-2\delta} \bigg( \mathrm{E}\bigg[\Big\| 
	\sum_{j \in \mathcal{J}_K} 
	\int_0^1 \int_0^u B''(\xi(Y_l,j,r)) \\
	&\quad \times \Big( P_N B(Y_l) \sum_{i \in \mathcal{J}_K} \tilde{e}_i I_{(i,j),l}^Q,
	P_N B(Y_l) \sum_{i \in \mathcal{J}_K} \tilde{e}_i I_{(i,j),l}^Q, \tilde{e}_j \Big) 
	\, \mathrm{d}r \, \mathrm{d}u \Big)
	\Big\|_H^p\bigg]\bigg)^{\frac{2}{p}} .
	\end{align*}
	Making use of $p=2$, assumptions (A2), (A3) and (A5\ref{A5c})
	yield
	\begin{align*}
	&\mathrm{E}\big[\| Y_m\|_{H_{\delta}}^2\big] \\
	&\leq C \mathrm{E} \big[ \|X_0\|_{H_{\delta}}^2 \big]
	+ C_{T,\delta} %h^{1-\frac{2}{p}} 
	\su h (t_m-t_l)^{-2\delta}
	\big( 1 + \mathrm{E} \big[ \|Y_l\|_{H_{\delta}}^2 \big] \big) \\
	&\quad + C \su \mathrm{E} \big[ \|B(Y_l)\|_{L_{HS}(U_0,H_{\delta})}^2 \big]
	\int_{t_l}^{t_{l+1}}  \big\|(-A)^{-\delta}\big\|_{L(H)}^2
	\big\|(-A)^{\delta}e^{A(t_m-t_l)}\big\|_{L(H)}^2 \, \mathrm{d}s \\
	&\quad + C_{\delta} \, M \su (t_m-t_l)^{-2\delta} \sum_{i_1,i_2,j_1,j_2 \in \mathcal{J}_K} \mathrm{E}\bigg[
	I_{(i_1,j_1),l}^Q I_{(i_2,j_2),l}^Q \\
	&\quad \times \big\langle B'(Y_l) \big( P_N B(Y_l) \tilde{e}_{i_1}, \tilde{e}_{j_1} \big),
	B'(Y_l) \big( P_N B(Y_l) \tilde{e}_{i_2}, \tilde{e}_{j_2} \big) \big\rangle_H
	\bigg] \\
	&\quad + C_{\delta} \, M \su (t_m-t_l)^{-2\delta} \bigg( \sum_{j \in \mathcal{J}_K} \bigg( 
	\mathrm{E}\bigg[ \Big(
	\int_0^1 \int_0^u \big\| B''(\xi(Y_l,j,r)) \big( P_N B(Y_l), P_N B(Y_l) \big) \big\|_{L^{(2)}(U,L(U,H))} \\
	&\quad \times \Big\| \sum_{i \in \mathcal{J}_K}  \tilde{e}_i I_{(i,j),l}^Q \Big\|_U^2 
	\| \tilde{e}_j \|_U
	\, \mathrm{d}r \, \mathrm{d}u \Big)^2
	\bigg] \bigg)^{\frac{1}{2}} \bigg)^{2} \\
	&\leq C \mathrm{E} \big[ \|X_0\|_{H_{\delta}}^2 \big]
	+ C_{T,\delta} \su h (t_m-t_l)^{-2\delta}
	\big( 1 + \mathrm{E} \big[ \|Y_l\|_{H_{\delta}}^2 \big] \big) \\
	&\quad + C \su \mathrm{E} \big[ \|B(Y_l)\|_{L_{HS}(U_0,H_{\delta})}^2 \big]
	\big\|(-A)^{-\delta}\big\|_{L(H)}^2 \big\|(-A)^{\delta}e^{A(t_m-t_l)}\big\|_{L(H)}^2 \, h \\
	&\quad + C_{\delta} \, M \su (t_m-t_l)^{-2\delta} \sum_{i_1,i_2,j_1,j_2 \in \mathcal{J}_K}
	\mathrm{E}\big[ I_{(i_1,j_1),l}^Q I_{(i_2,j_2),l}^Q \big] \\
	&\quad \times \mathrm{E}\big[ \big\langle B'(Y_l) \big( P_N B(Y_l) \tilde{e}_{i_1}, \tilde{e}_{j_1} \big),
	B'(Y_l) \big( P_N B(Y_l) \tilde{e}_{i_2}, \tilde{e}_{j_2} \big) \big\rangle_H
	\big] \\
	&\quad + C_{\delta} \, M \su (t_m-t_l)^{-2\delta} \bigg( \sum_{j \in \mathcal{J}_K} \bigg( 
	\mathrm{E}\bigg[ \Big(
	\int_0^1 \int_0^u \big(1 + \big\| \xi(Y_l,j,r) \big\|_H + \big\| Y_l \big\|_H \big) \\
	&\quad \times \sum_{i \in \mathcal{J}_K} \big( I_{(i,j),l}^Q \big)^2
	\, \mathrm{d}r \, \mathrm{d}u \Big)^2
	\bigg] \bigg)^{\frac{1}{2}} \bigg)^{2} .
	\end{align*}
	Due to $\mathrm{E} \big[ I_{(i_1,j_1),l}^Q I_{(i_2,j_2),l}^Q \big] = \tfrac{1}{2} \eta_{i1}\eta_{i2}h_l^2$
	if $i_1 = i_2$ and $j_1=j_2$ and $\mathrm{E} \big[ I_{(i_1,j_1),l}^Q I_{(i_2,j_2),l}^Q \big] = 0$ 
	otherwise, we get
	\allowdisplaybreaks{
		\begin{align*}
		&\mathrm{E}\big[\| Y_m\|_{H_{\delta}}^2\big]) \\
		&\leq C \mathrm{E} \big[ \|X_0\|_{H_{\delta}}^2 \big]
		+ C_{T,\delta} \su h (t_m-t_l)^{-2\delta}
		\big( 1 + \mathrm{E} \big[ \|Y_l\|_{H_{\delta}}^2 \big] \big) \\
		&\quad + C \su \mathrm{E} \big[ \|B(Y_l)\|_{L_{HS}(U_0,H_{\delta})}^2 \big]
		\big\|(-A)^{-\delta}\big\|_{L(H)}^2 \big\|(-A)^{\delta}e^{A(t_m-t_l)}\big\|_{L(H)}^2 \, h \\
		&\quad + C_{\delta} \, M \su (t_m-t_l)^{-2\delta} \sum_{i,j \in \mathcal{J}_K} \eta_i \eta_j h^2 \,
		\mathrm{E}\big[ \| B'(Y_l) \big( P_N B(Y_l) \tilde{e}_i, \tilde{e}_j \big)\|_H^2 \big] \\
		&\quad + C_{\delta} \, M \su (t_m-t_l)^{-2\delta} \bigg( \sum_{j \in \mathcal{J}_K} \bigg( 
		\mathrm{E}\bigg[ \Big(
		\int_0^1 \int_0^u \Big(1 + 2 \big\| Y_l \big\|_H + r \Big\| P_N B(Y_l) 
		\sum_{i \in \mathcal{J}_K} \tilde{e}_i I_{(i,j),l}^Q \Big\|_H \Big) \\
		&\quad \times \sum_{i \in \mathcal{J}_K} \big( I_{(i,j),l}^Q \big)^2
		\, \mathrm{d}r \, \mathrm{d}u \Big)^2
		\bigg] \bigg)^{\frac{1}{2}} \bigg)^{2} \\
		&\leq C \mathrm{E} \big[ \|X_0\|_{H_{\delta}}^2 \big]
		+ C_{T,\delta} \su h (t_m-t_l)^{-2\delta}
		\big( 1 + \mathrm{E} \big[ \|Y_l\|_{H_{\delta}}^2 \big] \big) \\
		&\quad + C \su \mathrm{E} \big[ \tr Q \, \|B(Y_l)\|_{L(U,H_{\delta})}^2 \big]
		\big\|(-A)^{-\delta}\big\|_{L(H)}^2 \big\|(-A)^{\delta}e^{A(t_m-t_l)}\big\|_{L(H)}^2 \, h \\
		&\quad + C_{Q,\delta} \, M \su (t_m-t_l)^{-2\delta} h^2 \, \mathrm{E}\big[ \big\| B'(Y_l) P_N B(Y_l)
		\big\|_{L(U,L(U,H))}^2 \big] \\
		&\quad + C_{\delta} \, M \su (t_m-t_l)^{-2\delta} \bigg( \sum_{j \in \mathcal{J}_K} \bigg( 
		\mathrm{E}\bigg[
		\Big( 1 + \big\| Y_l \big\|_{H_{\delta}} + \big\|(-A)^{-\delta} \big\|_{L(H)} \big\| B(Y_l) \big\|_{L(U,H_{\delta})} \\
		&\quad \times \Big( \sum_{i \in \mathcal{J}_K} \big( I_{(i,j),l}^Q\big)^2 \Big)^{\frac{1}{2}} \Big)^2
		\Big( \sum_{i \in \mathcal{J}_K} \big( I_{(i,j),l}^Q \big)^2 \Big)^2
		\bigg] \bigg)^{\frac{1}{2}} \bigg)^{2} \\
		&\leq C \mathrm{E} \big[ \|X_0\|_{H_{\delta}}^2 \big]
		+ C_{T,\delta} \su h (t_m-t_l)^{-2\delta}
		\big( 1 + \mathrm{E} \big[ \|Y_l\|_{H_{\delta}}^2 \big] \big)
		+ C_{Q,\delta} h \su (t_m-t_l)^{-2\delta} \big(1 + \mathrm{E} \big[ \|Y_l\|_{H_{\delta}}^2 \big] \big) \\
		&\quad + C_{Q,T,\delta} h \su (t_m-t_l)^{-2\delta} \mathrm{E}\big[ \big\| B'(Y_l) \big\|_{L(H,L(U,H))}^2
		\big\| (-A)^{-\delta} \big\|_{L(H)}^2 \big\| B(Y_l) \|_{L(U,H_{\delta})}^2 \big] \\
		&\quad + C_{\delta} \, M \su (t_m-t_l)^{-2\delta} \bigg( \sum_{j \in \mathcal{J}_K} \bigg( 
		\mathrm{E}\bigg[ \Big( \Big( \sum_{i \in \mathcal{J}_K} \big( I_{(i,j),l}^Q \big)^2 \Big)^2
		+ \Big( \sum_{i \in \mathcal{J}_K} \big( I_{(i,j),l}^Q \big)^2 \Big)^3 \Big) \\
		&\quad \times \big( 1 + \big\| Y_l \big\|_{H_{\delta}}^2 \big) \bigg] \bigg)^{\frac{1}{2}} \bigg)^{2} \\
		&\leq C \mathrm{E} \big[ \|X_0\|_{H_{\delta}}^2 \big]
		+ C_{Q,T,\delta} h^{1-2\delta} \su (m-l)^{-2\delta}
		\big( 1 + \mathrm{E} \big[ \|Y_l\|_{H_{\delta}}^2 \big] \big) \\
		&\quad + C_{\delta} \, M \su (t_m-t_l)^{-2\delta} \bigg( \sum_{j \in \mathcal{J}_K} \bigg( 
		\bigg( \mathrm{E}\bigg[ \Big( \sum_{i \in \mathcal{J}_K} \big( I_{(i,j)}^Q(h_l) \big)^2 \Big)^2 \bigg]
		+ \mathrm{E}\bigg[  \Big( \sum_{i \in \mathcal{J}_K} \big( I_{(i,j)}^Q(h_l) \big)^2 \Big)^3  \bigg] \bigg) \\
		&\quad \times \big( 1 + \mathrm{E}\big[  \big\| Y_l \big\|_{H_{\delta}}^2 \big] \big) \bigg)^{\frac{1}{2}} \bigg)^{2} \\
		&\leq C \mathrm{E} \big[ \|X_0\|_{H_{\delta}}^2 \big]
		+ C_{Q,T,\delta} h^{1-2\delta} \su (m-l)^{-2\delta}
		\big( 1 + \mathrm{E} \big[ \|Y_l\|_{H_{\delta}}^2 \big] \big) \\
		&\quad + C_{Q,\delta} \, M \, h^{-2\delta} \su (m-l)^{-2\delta} \bigg( \tr Q \bigg( (\tr Q)^2 h^4 + (\tr Q)^3 h^6 
		\bigg)^{\frac{1}{2}} \bigg)^{2} \big( 1 + \mathrm{E}\big[  \big\| Y_l \big\|_{H_{\delta}}^2 \big] \big) \\
		&\leq C \mathrm{E} \big[ \|X_0\|_{H_{\delta}}^2 \big]
		+ C_{Q,T,\delta} h^{1-2\delta} \su (m-l)^{-2\delta}
		\big( 1 + \mathrm{E} \big[ \|Y_l\|_{H_{\delta}}^2 \big] \big) \\
		&\quad + C_{Q,T,\delta} \, h^{1-2\delta} \su (m-l)^{-2\delta} \big( h^2 + h^4 \big) 
		\big( 1 + \mathrm{E}\big[  \big\| Y_l \big\|_{H_{\delta}}^2 \big] \big).
		\end{align*}
	}%
	%
	% --- End of case 2 ---
	%
	Now, we continue with the final estimates in case~1 and case~2 simultaneously having in mind 
	that case~2 is restricted to $p=2$.
	Interpreting the terms $\su (m-l)^{-2\delta}$ as lower Darboux sums, 
	we estimate these expressions as in the proof of the scheme for SPDEs with commutative noise
	in~\cite{MR3842926},
	see also~\cite{MR2471778},
	for  $\delta \in (0,\frac{1}{2})$ and all $m\in\{1,\ldots,M\}$, 
	$M\in\mathbb{N}$
	\begin{equation*}
	\su (m-l)^{-2\delta}  = 
	\sum_{l=1}^m l^{-2\delta} \leq 1+\int_1^M r^{-2\delta} \, \mathrm{d}r \leq \frac{M^{1-2\delta}}{1-2\delta}.
	\end{equation*}
	This yields
	\begin{equation*}
	\big(\mathrm{E}\big[\| Y_m\|_{H_{\delta}}^p\big]\big)^{\frac{2}{p}}
	\leq C_p \big( \mathrm{E} \big[ \| X_0 \|_{H_{\delta}}^p \big] \big)^{\frac{2}{p}} 
	+ C_{p,Q,T,\delta}
	+ h^{1-2\delta} C_{p,Q,T,\delta} \su (m-l)^{-2\delta}
	\big(\mathrm{E}\big[\|Y_l\|_{H_{\delta}}^p\big]\big)^{\frac{2}{p}}
	\end{equation*}
	in a first step. 
	Further, the discrete Gronwall Lemma implies
	the boundedness of the moments
	\begin{align*}
	\big(\mathrm{E}\big[\| Y_m\|_{H_{\delta}}^p\big]\big)^{\frac{2}{p}} 
	&\leq \Big(C_p \big( \mathrm{E} \big[ \| X_0 \|_{H_{\delta}}^p \big] \big)^{\frac{2}{p}}
	+ C_{p,Q,T,\delta} \Big) e^{C_{p,Q,T,\delta}  \su (m-l)^{-2\delta}h^{1-2\delta}} \\
	&\leq C_{p,Q,T,\delta} \big(1+\big(\mathrm{E}\big[\|X_0\|_{H_{\delta}}^p\big]\big)^{\frac{2}{p}}\big)
	\end{align*}
	for all $m\in\{1,\ldots,M\}$, $M\in\mathbb{N}$, for $p\in[2,\infty)$ in case of (A5\ref{A5a})
	and for $p=2$ in case of (A5\ref{A5c}).
\end{proof}
We address the proof of Theorem~\ref{Error:DFM} now and show that the 
scheme converges with the specified order. This estimate does not yet
involve any approximation of the stochastic iterated integrals.
\begin{proof}[Proof of Theorem~\ref{Error:DFM}]
	First, we express the mild solution of~\eqref{SPDE} as 
	\begin{equation*}
	X_{t_m} = e^{At_m}X_0 + \sI e^{A(t_m-s)}F(X_s)\,\mathrm{d}s + \sI e^{A(t_m-s)}B(X_s)\,\mathrm{d}W_s
	\end{equation*}
	for all $m\in\{0,\ldots,M\}$, $M\in\mathbb{N}$ to align the components with the corresponding terms in the 
	approximation below.
	We define
	the following auxiliary processes for $m\in\{0,\ldots,M\}$, $M,N,K\in\mathbb{N}$
	\begin{align*}
	Y_{m}^{\MIL} &= P_N \bigg( e^{At_m}X_0 + \sI e^{A(t_m-t_l)}F(Y_l^{\MIL})\,\mathrm{d}s
	+ \sI e^{A(t_m-t_l)}B(Y_l^{\MIL})\,\mathrm{d}W^K_s  \\
	&\quad + \sI e^{A(t_m-t_l)} B'(Y_l^{\MIL})\Big(P_N\int_{t_l}^s B(Y_l^{\MIL})\,\mathrm{d}W_r^K\Big)\,\mathrm{d}W_s^K\bigg),
	\\
	\bar{Y}^{\MIL}_{m} &= P_N\bigg(e^{At_m} X_0 + \sI e^{A(t_m-t_l)}F(Y_l)\,\mathrm{d}s
	+ \sI e^{A(t_m-t_l)}B(Y_l)\,\mathrm{d}W^K_s  \\
	&\quad +\sI e^{A(t_m-t_l)} B'(Y_l)\Big(P_N\int_{t_l}^sB(Y_l)\,\mathrm{d}W_r^K\Big)\,\mathrm{d}W_s^K\bigg).
	\end{align*}
	The discrete process $(Y_m)_{m\in\{0,\ldots,M\}}$ denotes
	the approximation obtained 
	by the $\DFM$ scheme in~\eqref{DFM}.
	The auxiliary processes are introduced 
	in order to split the approximation error
	such that we can employ some known prior estimates. We
	analyze the following terms separately
	\begin{align}\label{ErrorSplit}
	\Big(\mathrm{E}\big[\|X_{t_m}-\Y_m\|_H^2\big]\Big)^{\frac{1}{2}} 
	&\leq \Big(\mathrm{E}\big[\|X_{t_m}-Y_m^{\MIL}\|_H^2\big]\Big)^{\frac{1}{2}}
	+\Big(\mathrm{E}\big[\|Y_m^{\MIL}-\Y_m\|_H^2\big]\Big)^{\frac{1}{2}}
	\nonumber \\  
	&\leq \Big(\mathrm{E}\big[\|X_{t_m}-Y_m^{\MIL}\|_H^2\big]\Big)^{\frac{1}{2}}
	+\Big(\mathrm{E}\big[\|Y_m^{\MIL}-\bar{Y}^{\MIL}_{m}\|_H^2\big]\Big)^{\frac{1}{2}} \nonumber\\ 
	&\quad +\Big(\mathrm{E}\big[\|\bar{Y}^{\MIL}_{m}-\Y_m\|_H^2\big]\Big)^{\frac{1}{2}}
	\end{align}
	for all $m\in\{0,\ldots,M\}$, $M\in\mathbb{N}$. The first term is similar to the error that results from 
	the approximation of~\eqref{SPDE} with the Milstein scheme by Jentzen and R\"ockner 
	presented in~\cite{MR3320928}. A slight difference arises as we introduce the projection operator
	$P_N$ in the definition of $Y_{m}^{\MIL}$, see 
	the computations 
	in~\cite{2015arXiv150908427L,MR3842926}.
	The main reasoning, however, is the same.
	In the error analysis in~\cite{MR3320928}, 
	the commutativity condition is not needed - it is only 
	employed to facilitate implementation - 
	whereas all conditions required in
	the proof in~\cite{MR3320928} are fulfilled due to 
	assumptions (A1)--(A4). 
	Therefore, the estimate 
	\begin{equation} \label{Proof-Main-Thm-Estimate-Mil-Orig}
	\sup_{m\in\{0,\ldots,M\}} \Big(\mathrm{E}\big[\|X_{t_m}-Y_m^{\MIL}\|_H^2\big]\Big)^{\frac{1}{2}} \leq
	C_{Q,T} \Big( \Big( \inf_{i \in \mathcal{I} \setminus
		\mathcal{I}_N} \lambda_i \Big)^{-\gamma}
	+ \Big( \sup_{j \in \mathcal{J} \setminus \mathcal{J}_K}
	\eta_j \Big)^{\alpha} + M^{-\min(2(\gamma-\beta),\gamma)}\Big)
	\end{equation}
	for arbitrary $N,M,K\in\mathbb{N}$ is valid. 
	For details, we refer to~\cite{MR3320928}. \\ \\
	The error estimate of the second term in~\eqref{ErrorSplit}, $
	\mathrm{E}\big[\|Y_m^{\MIL}-\bar{Y}^{\MIL}_m\|_H^2\big]$, $m\in\{0,\ldots,M\}$, $M\in\mathbb{N}$,
	can be obtained by the same means as in the proof 
	of convergence of the Milstein scheme 
	in~\cite{MR3320928} and mainly relies on the Lipschitz properties 
	of the operators.
	We transfer this reasoning from \cite[Section 6.3]{MR3320928}, which yields
	\begin{equation}\label{Milbar}
	\mathrm{E}\big[\|Y_m^{\MIL}-\bar{Y}^{\MIL}_m\|_H^2\big] \leq C_Th\sum_{l=0}^{m-1}
	\mathrm{E}\big[\|Y_l^{\MIL}-Y_l\|_H^2\big]
	\end{equation}
	for all $m\in\{0,\ldots,M\}$, $M,N,K\in\mathbb{N}$.\\ \\
	Next, we analyze the third term in~\eqref{ErrorSplit} 
	which represents the error that results from the approximation
	of the derivative.
	We can show that the theoretical order of convergence
	that the Milstein scheme obtains
	is not reduced by this approximation.
	We rewrite the expression in~\eqref{ErrorSplit} as
	\begin{align*}
	&\mathrm{E}\big[\|\bar{Y}^{\MIL}_{m}-\Y_m\|_H^2\big] \\
	&= \mathrm{E}\bigg[\Big\|P_N\Big(\su \sum_{i,j \in\mathcal{J}_K}
	\sqrt{\eta_j}\sqrt{\eta_i}e^{A(t_m-t_l)}B'(Y_l)\big(P_NB(Y_l)\tilde{e}_i,\tilde{e}_j\big)
	I_{(i,j),l}\Big) \\
	&\quad - P_N\Big(\su e^{A(t_m-t_l)}\sum_{j\in\mathcal{J}_K}
	\Big(B\Big(\Y_l+\sum_{i\in\mathcal{J}_K}\sqrt{\eta_j}\sqrt{\eta_i}P_NB(\Y_l)
	\tilde{e}_iI_{(i,j),l}\Big)\tilde{e}_j-B(\Y_l)\tilde{e}_j\Big)\Big)\Big\|_H^2\bigg].
	\end{align*}
	We employ a Taylor approximation of first 
	order for the second term, see~\eqref{Proof-Lem-Case2-Taylor2}, %\eqref{Taylor},
	such that the first order derivatives cancel. Moreover,
	the triangle inequality and assumption (A3) imply
	\begin{equation} \label{Proof-Main-Thm-Last-Eqn-before-cases}
	\begin{split}
	&\mathrm{E}\big[\|\bar{Y}^{\MIL}_{m}-\Y_m\|_H^2\big] \\
	&\leq
	\mathrm{E}\bigg[\Big\|\su e^{A(t_{m}-t_l)}\sum_{j\in\mathcal{J}_K}
	\int_0^1\int_0^uB''\Big(\Y_l+r\sum_{i\in\mathcal{J}_K}\sqrt{\eta_j}
	\sqrt{\eta_i}P_NB(\Y_l)\tilde{e}_iI_{(i,j),l}\Big)  \\
	&\quad \Big(\sum_{i\in\mathcal{J}_K}\sqrt{\eta_j}\sqrt{\eta_i}P_NB(\Y_l)\tilde{e}_i
	I_{(i,j),l},\sum_{i\in\mathcal{J}_K}\sqrt{\eta_j}
	\sqrt{\eta_i}P_NB(\Y_l)\tilde{e}_iI_{(i,j),l}\Big)\tilde{e}_j \, \mathrm{d}r\,\mathrm{d}u
	\Big\|_H^2\bigg] .
	\end{split}
	\end{equation}
	%
	% --- Begin of case 1 with assumption (A5a)
	%
	Case~1: Assume that assumption (A5\ref{A5a}) is fulfilled, i.e., 
	Lemma~\ref{Proof:Lemma-Moment-nonComm} is valid for any $p \geq 2$.
	Thus, it follows from \eqref{Proof-Main-Thm-Last-Eqn-before-cases} that
	\begin{align*}
	&\mathrm{E}\big[\|\bar{Y}^{\MIL}_{m}-\Y_m\|_H^2\big] \\
	&\leq \mathrm{E}\bigg[\Big(\su \sum_{j\in\mathcal{J}_K}\int_0^1\int_0^u\Big\|e^{A(t_{m}-t_l)}
	B''\Big(\Y_l+r\sum_{i\in\mathcal{J}_K}\sqrt{\eta_j}\sqrt{\eta_i}P_NB(\Y_l)\tilde{e}_iI_{(i,j),l}\Big) \\ 
	&\quad \Big(\sum_{i\in\mathcal{J}_K}\sqrt{\eta_j}\sqrt{\eta_i}P_NB(\Y_l)\tilde{e}_iI_{(i,j),l},
	\sum_{i\in\mathcal{J}_K}\sqrt{\eta_j}\sqrt{\eta_i}P_N
	B(\Y_l)\tilde{e}_iI_{(i,j),l}\Big)\tilde{e}_j\Big\|_H  \,\mathrm{d}r\,\mathrm{d}u\Big)^2\bigg] \\
	&\leq C_T \mathrm{E}\bigg[\Big(\su \sum_{j\in\mathcal{J}_K}\int_0^1\int_0^u\Big\|
	B''\Big(\Y_l+r\sum_{i\in\mathcal{J}_K}\sqrt{\eta_j}\sqrt{\eta_i}P_NB(\Y_l)
	\tilde{e}_iI_{(i,j),l}\Big)\Big\|_{L^{(2)}(H,L(U,H))}\\
	&\quad \times \Big\|\sum_{i\in\mathcal{J}_K}
	\sqrt{\eta_j}\sqrt{\eta_i}P_NB(\Y_l)\tilde{e}_iI_{(i,j),l}\Big\|_H^2\,\mathrm{d}r
	\,\mathrm{d}u\Big)^2\bigg]\\
	&\leq C_T \mathrm{E}\bigg[\Big(\su \sum_{j\in\mathcal{J}_K}\Big\|B(\Y_l)
	\sum_{i\in\mathcal{J}_K}\sqrt{\eta_j}\sqrt{\eta_i}\tilde{e}_iI_{(i,j),l}\Big\|_H^2\Big)^2\bigg].
	\end{align*}
	Then, we obtain with Lemma~\ref{Proof:Lemma-Moment-nonComm} in the case
	that condition (A5\ref{A5a}) is valid that
	\begin{align*}
	\mathrm{E}\big[\|\bar{Y}^{\MIL}_{m}-\Y_m\|_H^2\big]
	&\leq C_T \mathrm{E}\bigg[\Big(\su \sum_{j\in\mathcal{J}_K}\|B(\Y_l)\|_{L(U,H)}^2
	\Big\|\sum_{i\in\mathcal{J}_K}\sqrt{\eta_j}\sqrt{\eta_i}
	\tilde{e}_iI_{(i,j),l}\Big\|_U^2\Big)^2\bigg] \\
	&\leq C_T \mathrm{E}\bigg[\Big(\su \sum_{j\in\mathcal{J}_K}\|B(\Y_l)\|_{L(U,H_{\delta})}^2 \\
	&\quad \times \sum_{i_1,i_2\in\mathcal{J}_K}
	\eta_j\sqrt{\eta_{i_1}}\sqrt{\eta_{i_2}}I_{(i_1,j),l}I_{(i_2,j),l}
	\langle\tilde{e}_{i_1},\tilde{e}_{i_2}\rangle_U\Big)^2\bigg]\\
	&= C_T \mathrm{E}\bigg[\Big(\su \sum_{j\in\mathcal{J}_K} \|B(\Y_l)\|_{L(U,H_{\delta})}^2
	\sum_{i\in\mathcal{J}_K}\eta_j\eta_iI_{(i,j),l}^2\Big)^2\bigg]\\
	&\leq C_T \bigg(\su\sum_{j\in\mathcal{J}_K}  \sum_{i\in\mathcal{J}_K}
	\big( \mathrm{E}[\|B(\Y_l)\|_{L(U,H_{\delta})}^4]\big)^{\frac{1}{2}}
	\Big(\mathrm{E}\Big[\Big(\eta_j\eta_iI_{(i,j),l}^2\Big)^2\Big]\Big)^{\frac{1}{2}}\bigg)^2\\
	&\leq C_{Q,T,\delta} \Big(\su\sum_{j\in\mathcal{J}_K}  \sum_{i\in\mathcal{J}_K}\eta_j\eta_i
	\big(\mathrm{E}\big[I_{(i,j),l}^4\big]\big)^{\frac{1}{2}}\Big)^2.
	\end{align*}
	Finally, we get
	\begin{equation}\label{BarY}
	\mathrm{E}\big[\|\bar{Y}^{\MIL}_{m}-\Y_m\|_H^2\big]
	\leq C_{Q,T,\delta} \Big(\su(\tr Q)^2 h^2\Big)^2
	\leq C_{Q,T,\delta} h^2 (\tr Q)^4 
	\leq C_{Q,T,\delta} h^2
	\end{equation}
	by the distributional properties of $I_{(i,j),l}$, $l\in\{0,\ldots,m-1\}$, 
	$i,j\in\mathcal{J}_K$ for all $m\in\{1,\ldots,M\}$, $M,K\in\mathbb{N}$,
	see \cite{MR1214374}. \\ \\
	%
	% --- End of case 1 with assumption (A5a)
	%
	%If (A5b) holds instead, we obtain this estimate  similarly.    
	%
	% --- Begin of case 2 with assumption (A5c)
	%
	Case~2: If assumption (A5\ref{A5c}) is fulfilled, then Lemma~\ref{Proof:Lemma-Moment-nonComm} 
	is valid for $p = 2$. Therefore, we need a customized proof to proceed and
	we get for $p=2$ from \eqref{Proof-Lem-Case2-Taylor2} and 
	\eqref{Proof-Main-Thm-Last-Eqn-before-cases} that
	\begin{align*}
	&\mathrm{E}\big[\|\bar{Y}^{\MIL}_{m}-\Y_m\|_H^2\big] \\
	&\leq
	\mathrm{E} \bigg[ \Big\| \su e^{A(t_{m}-t_l)} \sum_{j \in \mathcal{J}_K}
	\int_0^1 \int_0^u B''\Big( \Y_l + r \sum_{i \in \mathcal{J}_K}
	P_N B(\Y_l) \tilde{e}_i I^Q_{(i,j),l} \Big) \\
	&\quad \Big( \sum_{i \in \mathcal{J}_K} P_N B(\Y_l) \tilde{e}_i
	I^Q_{(i,j),l}, \sum_{i \in \mathcal{J}_K} P_N B(\Y_l) \tilde{e}_i
	I^Q_{(i,j),l} \Big) \tilde{e}_j \, \mathrm{d}r\,\mathrm{d}u
	\Big\|_H^2\bigg] \\
	&\leq
	M \su \bigg( \sum_{j \in \mathcal{J}_K} \bigg( \mathrm{E} \bigg[ 
	\Big\| e^{A(t_{m}-t_l)} \Big\|_{L(H)}^2
	\Big\| \int_0^1 \int_0^u B''\Big( \Y_l + r \sum_{i \in \mathcal{J}_K}
	P_N B(\Y_l) \tilde{e}_i I^Q_{(i,j),l} \Big) \\
	&\quad \Big( \sum_{i \in \mathcal{J}_K} P_N B(\Y_l) \tilde{e}_i
	I^Q_{(i,j),l}, \sum_{i \in \mathcal{J}_K} P_N B(\Y_l) \tilde{e}_i
	I^Q_{(i,j),l} \Big) \tilde{e}_j \, \mathrm{d}r\,\mathrm{d}u
	\Big\|_H^2\bigg] \bigg)^{\frac{1}{2}} \bigg)^2 \\
	&\leq C_T \, M \su \bigg( \sum_{j \in \mathcal{J}_K} \bigg( 
	\mathrm{E}\bigg[ \Big(
	\int_0^1 \int_0^u \big\| B''(\xi(Y_l,j,r)) \big( P_N B(Y_l), P_N B(Y_l) \big) \big\|_{L^{(2)}(U,L(U,H))} \\
	&\quad \times \Big\| \sum_{i \in \mathcal{J}_K}  \tilde{e}_i I_{(i,j),l}^Q \Big\|_U^2 
	\| \tilde{e}_j \|_U \, \mathrm{d}r \, \mathrm{d}u \Big)^2
	\bigg] \bigg)^{\frac{1}{2}} \bigg)^{2} \\
	&\leq C_{T,\delta} \, M \su \bigg( \sum_{j \in \mathcal{J}_K} \bigg( 
	\mathrm{E}\bigg[ \Big(
	\int_0^1 \int_0^u \big(1 + \big\| \xi(Y_l,j,r) \big\|_H + \big\| Y_l\big\|_H \big)
	\sum_{i \in \mathcal{J}_K} \big( I_{(i,j),l}^Q \big)^2
	\, \mathrm{d}r \, \mathrm{d}u \Big)^2
	\bigg] \bigg)^{\frac{1}{2}} \bigg)^{2} .
	\end{align*}
	Due to $\mathrm{E} \big[ I_{(i_1,j_1),l}^Q I_{(i_2,j_2),l}^Q \big] = \tfrac{1}{2} \eta_{i_1} 
	\eta_{j_1} h_l^2$
	if $i_1 = i_2$ and $j_1=j_2$ and $\mathrm{E} \big[ I_{(i_1,j_1),l}^Q I_{(i_2,j_2),l}^Q \big] = 0$ 
	otherwise, we get
	\allowdisplaybreaks{
		%\begin{equation} 
		\begin{align}
		%\label{Proof-Main-Thm-Case2-FinEst}
		%  \begin{split}
		&\mathrm{E}\big[\|\bar{Y}^{\MIL}_{m}-\Y_m\|_H^2\big] \nonumber \\
		&\leq
		C_{T,\delta} \, M \su \bigg( \sum_{j \in \mathcal{J}_K} \bigg( 
		\mathrm{E}\bigg[ \Big(
		\int_0^1 \int_0^u \Big(1 + 2 \big\| Y_l \big\|_H + r \Big\| P_N B(Y_l) 
		\sum_{i \in \mathcal{J}_K} \tilde{e}_i I_{(i,j),l}^Q \Big\|_H \Big) \nonumber \\
		&\quad \times \sum_{i \in \mathcal{J}_K} \big( I_{(i,j),l}^Q \big)^2
		\, \mathrm{d}r \, \mathrm{d}u \Big)^2
		\bigg] \bigg)^{\frac{1}{2}} \bigg)^{2} \nonumber \\
		&\leq C_{T,\delta} \, M \su \bigg( \sum_{j \in \mathcal{J}_K} \bigg( 
		\mathrm{E}\bigg[
		\Big( 1 + \big\| Y_l \big\|_H + \big\|(-A)^{-\delta} \big\|_{L(H)} \big\| B(Y_l) \big\|_{L(U,H_{\delta})} \nonumber \\
		&\quad \times \Big( \sum_{i \in \mathcal{J}_K} \big( I_{(i,j),l}^Q \big)^2 \Big)^{\frac{1}{2}} \Big)^2
		\Big( \sum_{i \in \mathcal{J}_K} \big( I_{(i,j),l}^Q \big)^2 \Big)^2
		\bigg] \bigg)^{\frac{1}{2}} \bigg)^{2} \nonumber \\
		&\leq C_{T,\delta} \, M \su \bigg( \sum_{j \in \mathcal{J}_K} \bigg( 
		\mathrm{E}\bigg[ \Big( \Big( \sum_{i \in \mathcal{J}_K} \big( I_{(i,j),l}^Q \big)^2 \Big)^2
		+ \Big( \sum_{i \in \mathcal{J}_K} \big( I_{(i,j),l}^Q \big)^2 \Big)^3 \Big)
		\big( 1 + \big\| Y_l \big\|_H^2 \big) \bigg] \bigg)^{\frac{1}{2}} \bigg)^{2} \nonumber \\
		&\leq C_{T,\delta} \, M \su \bigg( \sum_{j \in \mathcal{J}_K} \bigg( 
		\bigg( \mathrm{E}\bigg[ \Big( \sum_{i \in \mathcal{J}_K} \big( I_{(i,j),l}^Q \big)^2 \Big)^2 \bigg]
		+ \mathrm{E}\bigg[  \Big( \sum_{i \in \mathcal{J}_K} \big( I_{(i,j),l}^Q \big)^2 \Big)^3 \Big) \bigg] \bigg) 
		\big( 1 + \mathrm{E}\big[  \big\| Y_l \big\|_H^2 \big] \big) \bigg)^{\frac{1}{2}} \bigg)^{2} \nonumber \\
		&\leq C_{T,\delta} \, M \su \bigg( \tr Q \bigg( (\tr Q)^2 h^4 + (\tr Q)^3 h^6 
		\bigg)^{\frac{1}{2}} \bigg)^{2} \big( 1 + \mathrm{E}\big[  \big\| Y_l \big\|_{H_{\delta}}^2 \big] \big) \nonumber \\
		&\leq C_{T,\delta} \, (\tr Q)^4 h \su \big( h^2 + \tr Q \, h^4 \big) 
		\big( 1 + \mathrm{E}\big[  \big\| X_0 \big\|_{H_{\delta}}^2 \big] \big) \nonumber \\
		&\leq C_{Q,T,\delta} h^2 . 
		\label{Proof-Main-Thm-Case2-FinEst}
		%  \end{split}
		\end{align}
		%\end{equation}
	}%
	%
	% --- End of case 2 with assumption (A5c)
	%
	%
	Now, we proceed for both cases similarly.
	A combination of estimates~\eqref{Proof-Main-Thm-Estimate-Mil-Orig},
	\eqref{Milbar} and \eqref{BarY} or 
	\eqref{Proof-Main-Thm-Case2-FinEst} for case~1 and case~2, respectively, 
	with \eqref{ErrorSplit}, and Gronwall's Lemma imply
	\begin{align*}
	\mathrm{E}\big[\|Y_m^{\MIL}-Y_m\|_H^2\big]
	& \leq C_T h \sum_{l=0}^{m-1}   
	\mathrm{E}\big[\|Y_l^{\MIL}-Y_l\|_H^2\big] 
	+ C_{Q,T,\delta} h^2 
	\leq  C_{Q,T,\delta} h^2.
	\end{align*}
	This results in
	\begin{align*}
	\Big(\mathrm{E}\big[\|X_{t_m}-\Y_m\|_H^2\big]\Big)^{\frac{1}{2}} 
	&\leq C_{Q,T,\delta} \Big( \Big( \inf_{i \in \mathcal{I} \setminus
		\mathcal{I}_N} \lambda_i \Big)^{-\gamma}
	+ \Big( \sup_{j \in \mathcal{J} \setminus \mathcal{J}_K}
	\eta_j \Big)^{\alpha} + M^{-\min(2(\gamma-\beta),\gamma)}\Big)
	\end{align*}
	for the overall error.
\end{proof}
In the last part of this section, we prove the estimate that we obtain in the 
case that the stochastic double integrals are approximated additionally, that is,
this estimate incorporates the error of the algorithm
which is used to compute $\bar{I}^Q_{(i,j),l}$, $i,j\in\mathcal{J}_K$,
$l \in \{0,\ldots,M-1\}$.
\begin{remark}\label{Rem:MomentYBar}
	Under the assumption that
	\begin{equation*}
	\sum_{j\in\mathcal{J}_K}
	\bigg(\mathrm{E} \bigg[ \Big( \sum_{i\in\mathcal{J}_K}  
	\big(\bar{I}^Q_{(i,j),t,t+h} \big)^2 \Big)^{\frac{p}{2}} \bigg]\bigg)^{\frac{1}{p}}
	\leq C_Q h
	\end{equation*}
	for $p \in [2,\infty)$ in case of assumption (A5\ref{A5a}) or 
	\begin{equation*}
	\sum_{j \in \mathcal{J}_K} \bigg( 
	\mathrm{E}\bigg[  \Big( \sum_{i \in \mathcal{J}_K} \big( \bar{I}^Q_{(i,j),t,t+h} 
	\big)^2 \Big)^q \bigg] \bigg)^{\frac{1}{2}}
	\leq C_Q h^q
	\end{equation*}
	for $q=2,3$ in case of assumption (A5\ref{A5c}) for any $h>0$ and $t \in [0,T-h]$,
	a statement similar to Lemma~\ref{Proof:Lemma-Moment-nonComm} also holds
	for the process $(\bar{\Y}_l)_{l\in\{0,\ldots,M\}}$
	which includes the approximation of the stochastic double integral,
	i.e., it holds
	\begin{equation*}
	\sup_{m\in\{0,\ldots,M\}}\big(
	\mathrm{E}\big[\| \bar{\Y}_m \|_{H_{\delta}}^p \big] \big)^{\frac{1}{p}}
	\leq C_{Q,T,\delta} \big(1
	+ \big(\mathrm{E} \big[ \|X_0\|_{H_{\delta}}^p \big] \big)^{\frac{1}{p}} \big) ,
	\end{equation*}
	however with the restriction $p=2$ in case of (A5\ref{A5c}).
\end{remark}
\begin{proof}[Proof of Theorem~\ref{Thm:ErrorTotal}]
	From the proof of Theorem~\ref{Error:DFM} we get an estimate for
	$\Big(\mathrm{E}\big[\|X_{t_m}-\Y_m\|_H^2\big]\Big)^{\frac{1}{2}}$.
	It remains to prove the expression for
	the error caused by the approximation 
	of the iterated stochastic integrals, that is,
	\begin{align}\label{Eq:SplitErrorDI}
	\Big( \mathrm{E}\big[\|\Y_m-\bar{\Y}_m\|_H^2\big] \Big)^{\frac{1}{2}}
	\leq \Big( \mathrm{E}\big[\|\Y_m-\Y_{m,\bar{\Y}}\|_H^2\big] \Big)^{\frac{1}{2}}
	+ \Big( \mathrm{E}\big[\|\Y_{m,\bar{\Y}}-\bar{\Y}_m\|_H^2\big] \Big)^{\frac{1}{2}}
	\end{align}
	where
	\begin{align*}
	\Y_{m,\bar{\Y}} &=
	P_N\bigg( e^{At_m}X_0+ \sI e^{A(t_m-t_l)}F(\bar{\Y}_l)\,\mathrm{d}s
	+ \sI e^{A(t_m-t_l)} B(\bar{\Y}_l)\,\mathrm{d}W^K_s \\
	&\quad +\sum_{l=0}^{m-1}\sum_{j\in\mathcal{J}_K}e^{A(t_m-t_l)}
	\Big(B\Big(\bar{\Y}_l+\sum_{i\in\mathcal{J}_K}P_NB(\bar{\Y}_l)
	\tilde{e}_iI^Q_{(i,j),l}\Big)\tilde{e}_j-B(\bar{\Y}_l)\tilde{e}_j\Big)\bigg).
	\end{align*}
	For the terms inside the two integrals, we employ Taylor approximations of 
	first order of the difference operators 
	as in \eqref{Proof-Lem-Case2-Taylor2}
	where $\xi(\Y_l,j,r) = \Y_l + r \sum_{i \in \mathcal{J}_K}
	B(\Y_l) \tilde{e}_i I^Q_{(i,j),l}$ for all $j\in\mathcal{J}_K$, 
	$l \in \{0,\ldots,m-1\}$ and $r \in [0,1]$; below
	$\bar{\xi}(\bar{\Y}_l,j,r)$ is defined analogously.
	This yields 
	\begin{align} \label{Proof-Thm22-Eqn01}
	&\mathrm{E}\big[\|\Y_m-\Y_{m,\bar{\Y}}\|_H^2\big] \nonumber \\
	&= \mathrm{E}\bigg[\Big\| P_N \Big(
	\sI e^{A(t_m-t_l)}\big(F(\Y_l)-F(\bar{\Y}_l)\big)\,\mathrm{d}s 
	+ \sI e^{A(t_m-t_l)}\big( B(\Y_l)-B(\bar{\Y}_l)\big)\,\mathrm{d}W^K_s \nonumber \\
	&\quad
	+\sum_{l=0}^{m-1}\sum_{j\in \mathcal{J}_K} e^{A(t_m-t_l)}
	\Big( B'(\Y_l) \Big( \sum_{i \in \mathcal{J}_K}
	P_N B(\Y_l) \tilde{e}_i I_{(i,j),l}^Q,\tilde{e}_j \Big) \nonumber \\
	&\quad
	- B'(\bar{\Y}_l)\Big(\sum_{i\in\mathcal{J}_K} 
	P_N B(\bar{\Y}_l)\tilde{e}_iI_{(i,j),l}^Q,\tilde{e}_j \Big)\Big)
	+ \sum_{l=0}^{m-1} \sum_{j \in \mathcal{J}_K} e^{A(t_m-t_l)} \nonumber \\
	&\quad
	\Big(\int_0^1 \int_0^u B''(\xi(Y_l,j,r))
	\Big(\sum_{i\in\mathcal{J}_K} P_N B(\Y_l)
	\tilde{e}_i I_{(i,j),l}^Q, \sum_{i \in \mathcal{J}_K} P_N B(\Y_l) \tilde{e}_i I_{(i,j),l}^Q
	\Big) \tilde{e}_j \,\mathrm{d}r \, \mathrm{d}u \nonumber \\
	&\quad
	- \int_0^1 \int_0^u B''(\bar{\xi}(\bar{\Y}_l,j,r))
	\Big( \sum_{i \in \mathcal{J}_K} P_N B(\bar{\Y}_l)
	\tilde{e}_i I_{(i,j),l}^Q, \sum_{i \in \mathcal{J}_K}
	P_N B(\bar{\Y}_l) \tilde{e}_i I_{(i,j),l}^Q
	\Big) \tilde{e}_j \, \mathrm{d}r \, \mathrm{d}u \Big) \Big)
	\Big\|_H^2 \bigg] \nonumber \\
	&\leq
	C_{Q,T,\beta,\gamma} h \sum_{l=0}^{m-1} \mathrm{E} \Big[\big\|\Y_l-\bar{\Y}_l\big\|^2_H\Big]
	+ \mathrm{E} \bigg[\Big\| \sum_{l=0}^{m-1}\sum_{j\in \mathcal{J}_K} e^{A(t_m-t_l)} \nonumber \\
	&\quad
	\Big( \int_0^1 \int_0^u B''(\xi(Y_l,j,r))
	\Big(\sum_{i \in \mathcal{J}_K} P_N B(\Y_l)
	\tilde{e}_i I_{(i,j),l}^Q, \sum_{i \in \mathcal{J}_K} P_N B(\Y_l) \tilde{e}_i I_{(i,j),l}^Q
	\Big) \tilde{e}_j \, \mathrm{d}r \, \mathrm{d}u \Big) \Big) \nonumber \\
	&\quad
	- \int_0^1 \int_0^u B''(\bar{\xi}(\bar{\Y}_l,j,r))
	\Big( \sum_{i \in \mathcal{J}_K} P_N B(\bar{\Y}_l)
	\tilde{e}_i I_{(i,j),l}^Q, \sum_{i \in \mathcal{J}_K}
	P_N B(\bar{\Y}_l) \tilde{e}_i I_{(i,j),l}^Q
	\Big) \tilde{e}_j \, \mathrm{d}r \, \mathrm{d}u \Big) \Big)
	\Big\|_H^2 \bigg] \nonumber \\
	&\leq 
	C_{Q,T,\beta,\gamma} h \sum_{l=0}^{m-1} \mathrm{E} \Big[ \big\| \Y_l - \bar{\Y}_l \big\|^2_H \Big]
	+ C M \sum_{l=0}^{m-1} \bigg( \sum_{j \in \mathcal{J}_K} \bigg(
	\mathrm{E} \bigg[ \Big\| \int_0^1 \int_0^u \nonumber \\
	&\quad
	B''(\xi(Y_l,j,r)) \big( \sum_{i \in \mathcal{J}_K} P_N B(\Y_l) \tilde{e}_i I_{(i,j),l}^Q,
	\sum_{i \in \mathcal{J}_K} P_N B(\Y_l) \tilde{e}_i I_{(i,j),l}^Q \Big)
	\tilde{e}_j 
	\nonumber \\
	&\quad
	- 
	B''(\bar{\xi}(\bar{\Y}_l,j,r))
	\Big( \sum_{i \in \mathcal{J}_K} P_N B(\bar{\Y}_l) \tilde{e}_i
	I_{(i,j),l}^Q, \sum_{i \in \mathcal{J}_K} P_N B(\bar{\Y}_l)
	\tilde{e}_i I_{(i,j),l}^Q \Big)
	\tilde{e}_j \, \mathrm{d}r \, \mathrm{d}u \Big\|^2_H \bigg] \bigg)^{\frac{1}{2}} \bigg)^2
	\end{align}
	where in the second step the computations are the same as in
	\cite[Section 6.3]{MR3320928}, see also~\eqref{Milbar}. 
	This estimate mainly employs the Lipschitz
	continuity of the involved operators. \\ \\
	%
	%
	%
	% --- Begin of case 1 with assumption (A5a)
	%
	Case~1: Assume that assumption (A5\ref{A5a}) is fulfilled, i.e., 
	Lemma~\ref{Proof:Lemma-Moment-nonComm} is valid for any $p \geq 2$.
	Then, by the triangle inequality, the norm properties as well as 
	assumption (A3), \eqref{Proof-Thm22-Eqn01} results in
	\begin{align} \label{Proof-Thm2.2-Eqn03}
	&\mathrm{E}\big[\|\Y_m-\Y_{m,\bar{\Y}}\|_H^2\big] \nonumber \\
	&\leq C_{Q,T,\beta,\gamma} h \sum_{l=0}^{m-1}\mathrm{E}\Big[\big\|\Y_l-\bar{\Y}_l\big\|^2_H\Big] \nonumber \\
	&\quad + C M \sum_{l=0}^{m-1}\Big(\sum_{j\in\mathcal{J}_K} 
	\Big(\mathrm{E}\Big[\big\| B''(\xi(Y_l,j))
	\big\|^2_{L^{(2)}(H,L(U,H))}
	\big\|\sum_{i\in\mathcal{J}_K} 
	B(\Y_l)\tilde{e}_iI_{(i,j),l}^Q\big\|^4_H\Big]\Big)^{\frac{1}{2}}\Big)^2 \nonumber \\
	&\quad + C M \sum_{l=0}^{m-1}\Big(\sum_{j\in\mathcal{J}_K}
	\Big( \mathrm{E}\Big[\big\|B''(\bar{\xi}
	(\bar{\Y}_l,j))\big\|^2_{L^{(2)}(H,L(U,H))}\big\|
	\sum_{i\in\mathcal{J}_K} B(\bar{\Y}_l)\tilde{e}_i
	I_{(i,j),l}^Q\big\|^4_H\Big]\Big)^{\frac{1}{2}}\Big)^2 \nonumber \\
	&\leq C_{Q,T,\beta,\gamma} h \sum_{l=0}^{m-1}
	\mathrm{E}\Big[\big\|\Y_l-\bar{\Y}_l\big\|^2_H\Big] \nonumber \\
	&\quad + C M \sum_{l=0}^{m-1}\bigg(\sum_{j\in\mathcal{J}_K}
	\bigg(\mathrm{E}\bigg[  \Big( \sum_{i_1,i_2\in\mathcal{J}_K}
	I_{(i_1,j),l}^QI_{(i_2 ,j),l}^Q
	\langle \tilde{e}_{i_1},\tilde{e}_{i_2}\rangle_U \Big)^2 \big\|
	B(\Y_l)\big\|^4_{L(U,H)}\bigg]\bigg)^{\frac{1}{2}}\bigg)^2 \nonumber \\
	&\quad + C M \sum_{l=0}^{m-1}\bigg(\sum_{j\in\mathcal{J}_K}
	\bigg(\mathrm{E}\bigg[ \Big( \sum_{i_1,i_2\in\mathcal{J}_K} I_{(i_1,j),l}^Q
	I_{(i_2 ,j),l}^Q  \langle \tilde{e}_{i_1},\tilde{e}_{i_2}\rangle_U \Big)^2
	\big\|B(\bar{\Y}_l)\big\|^4_{L(U,H)}\bigg]\bigg)^{\frac{1}{2}}\bigg)^2 \nonumber \\
	&\leq 
	C_{Q,T,\beta,\gamma} h \sum_{l=0}^{m-1}\mathrm{E}\Big[\big\|\Y_l-\bar{\Y}_l\big\|^2_H\Big] \nonumber \\
	&\quad + C M \sum_{l=0}^{m-1}\bigg(\sum_{j\in\mathcal{J}_K}
	\bigg(\mathrm{E}\bigg[\Big( \sum_{i\in\mathcal{J}_K} \big(I_{(i,j),l}^Q\big)^2\Big)^2 \big\|
	B(\Y_l)\big\|^4_{L(U,H)}\bigg]\bigg)^{\frac{1}{2}}\bigg)^2 \nonumber \\
	&\quad + C M \sum_{l=0}^{m-1}\Big(\sum_{j\in\mathcal{J}_K}
	\bigg(\mathrm{E}\bigg[ \Big( \sum_{i\in\mathcal{J}_K} \big(I_{(i,j),l}^Q\big)^2\Big)^2
	\big\|B(\bar{\Y}_l)\big\|^4_{L(U,H)}\bigg]\bigg)^{\frac{1}{2}}\bigg)^2.
	\end{align}
	This expression can further be simplified by
	the properties of $I_{(i,j),l}^Q$ 
	for  $l\in\{0,\ldots,m-1\}$, $m\in\{1,\ldots,M\}$, $i,j\in\mathcal{J}_K$, $M,K\in\mathbb{N}$ 
	and assumption (A3). Furthermore, (A5\ref{A5a}),
	Lemma~\ref{Proof:Lemma-Moment-nonComm} and
	Remark~\ref{Rem:MomentYBar} imply
	\begin{align*}
	&\mathrm{E}\big[\|\Y_m-\Y_{m,\bar{\Y}}\|_H^2\big] \nonumber \\
	&\leq  
	C_{Q,T,\beta,\gamma} h \sum_{l=0}^{m-1}\mathrm{E}\Big[\big\|\Y_l-\bar{\Y}_l\big\|^2_H\Big]
	+ C_Q M \sum_{l=0}^{m-1}\Big(\Big(h^4\,
	\Big(\mathrm{E}\big[\|B(\Y_l)\|^4_{L(U,H)}\big]
	+ \mathrm{E}\big[\|B(\bar{\Y}_l)\|^4_{L(U,H)}\big]\Big)\Big)^{\frac{1}{2}}\Big)^2 \\
	&\leq C_{Q,T,\beta,\gamma} h \sum_{l=0}^{m-1}\mathrm{E}\Big[\big\|\Y_l-\bar{\Y}_l\big\|^2_H\Big]
	+ C_{Q,T,\beta,\gamma}  \sum_{l=0}^{m-1}h^3 \\
	&\leq 
	C_{Q,T,\beta,\gamma} h \sum_{l=0}^{m-1} \mathrm{E}\Big[\big\|\Y_l-\bar{\Y}_l\big\|^2_H\Big] 
	+ C_{Q,T,\beta,\gamma} h^2.
	\end{align*}
	%
	% --- End of case 1 with assumption (A5a)
	%
	%If (A5b) holds instead, we obtain this estimate  similarly.    
	%
	% --- Begin of case 2 with assumption (A5c)
	%
	Case~2: If assumption (A5\ref{A5c}) is fulfilled, then Lemma~\ref{Proof:Lemma-Moment-nonComm} 
	is valid for $p = 2$. By applying the triangle inequality, we get
	analogously to case~2 in the proof of Theorem~\ref{Error:DFM} that
	\begin{align*}
	&\mathrm{E}\big[\|\Y_m-\Y_{m,\bar{\Y}}\|_H^2\big] \\
	&\leq 
	C_{Q,T,\beta,\gamma} \, h \sum_{l=0}^{m-1} \mathrm{E} \Big[ \big\| \Y_l - \bar{\Y}_l \big\|^2_H \Big]
	+ C \, M \sum_{l=0}^{m-1} \bigg( \sum_{j \in \mathcal{J}_K} 
	\bigg( \mathrm{E} \bigg[ \Big\| \int_0^1 \int_0^u \\
	&\quad
	B''(\xi(Y_l,j,r)) \big( \sum_{i \in \mathcal{J}_K} P_N B(\Y_l) \tilde{e}_i I_{(i,j),l}^Q,
	\sum_{i \in \mathcal{J}_K} P_N B(\Y_l) \tilde{e}_i I_{(i,j),l}^Q \Big)
	\tilde{e}_j  \, \mathrm{d}r \, \mathrm{d}u \Big\|^2_H \bigg] \bigg)^{\frac{1}{2}}
	\\
	&\quad + \sum_{j \in \mathcal{J}_K} 
	\bigg( \mathrm{E} \bigg[ \Big\| \int_0^1 \int_0^u \\
	&\quad
	B''(\bar{\xi}(\bar{\Y}_l,j,r))
	\Big( \sum_{i \in \mathcal{J}_K} P_N B(\bar{\Y}_l) \tilde{e}_i
	I_{(i,j),l}^Q, \sum_{i \in \mathcal{J}_K} P_N B(\bar{\Y}_l)
	\tilde{e}_i I_{(i,j),l}^Q \Big)
	\tilde{e}_j \, \mathrm{d}r \, \mathrm{d}u \Big\|^2_H \bigg] \bigg)^{\frac{1}{2}} 
	\bigg)^2 \\
	&\leq 
	C_{Q,T,\beta,\gamma} \, h \sum_{l=0}^{m-1} \mathrm{E} \Big[ \big\| \Y_l - \bar{\Y}_l \big\|^2_H \Big] \\
	&\quad
	+ C \, M \su \bigg( \sum_{j \in \mathcal{J}_K} \bigg( 
	\mathrm{E}\bigg[ \Big(
	\int_0^1 \int_0^u \big\| B''(\xi(Y_l,j,r)) \big( P_N B(Y_l), P_N B(Y_l) \big) \big\|_{L^{(2)}(U,L(U,H))} \\
	&\quad \times \Big\| \sum_{i \in \mathcal{J}_K}  \tilde{e}_i I_{(i,j),l}^Q \Big\|_U^2 
	\| \tilde{e}_j \|_U \, \mathrm{d}r \, \mathrm{d}u \Big)^2
	\bigg] \bigg)^{\frac{1}{2}} \\
	&\quad
	+ \sum_{j \in \mathcal{J}_K} \bigg( \mathrm{E}\bigg[ \Big(
	\int_0^1 \int_0^u \big\| B''(\bar{\xi}(\bar{\Y}_l,j,r)) 
	\big( P_N B(\bar{\Y}_l), P_N B(\bar{\Y}_l) \big) \big\|_{L^{(2)}(U,L(U,H))} \\
	&\quad \times \Big\| \sum_{i \in \mathcal{J}_K}  \tilde{e}_i I_{(i,j),l}^Q \Big\|_U^2 
	\| \tilde{e}_j \|_U \, \mathrm{d}r \, \mathrm{d}u \Big)^2
	\bigg] \bigg)^{\frac{1}{2}}  
	\bigg)^{2} \\
	&\leq 
	C_{Q,T,\beta,\gamma} \, h \sum_{l=0}^{m-1} \mathrm{E} \Big[ \big\| \Y_l - \bar{\Y}_l \big\|^2_H \Big] \\
	&\quad + C \, M \su \bigg( \sum_{j \in \mathcal{J}_K} \bigg( 
	\mathrm{E}\bigg[ \Big(
	\int_0^1 \int_0^u \big(1 + \big\| \xi(\Y_l,j,r) \big\|_H + \big\| \Y_l\big\|_H \big)
	\sum_{i \in \mathcal{J}_K} \big( I_{(i,j),l}^Q \big)^2
	\, \mathrm{d}r \, \mathrm{d}u \Big)^2
	\bigg] \bigg)^{\frac{1}{2}} \\
	&\quad
	+ \sum_{j \in \mathcal{J}_K} \bigg( 
	\mathrm{E}\bigg[ \Big(
	\int_0^1 \int_0^u \big(1 + \big\| \bar{\xi}(\bar{\Y}_l,j,r) \big\|_H + \big\| \bar{\Y}_l \big\|_H \big)
	\sum_{i \in \mathcal{J}_K} \big( I_{(i,j),l}^Q \big)^2
	\, \mathrm{d}r \, \mathrm{d}u \Big)^2
	\bigg] \bigg)^{\frac{1}{2}}
	\bigg)^{2} .
	\end{align*}
	Making use of the distributional characteristics of $I_{(i,j),l}^Q$, we get
	\begin{align} \label{Proof-Thm22-Case2-FinEst}
	&\mathrm{E}\big[\|\Y_m-\Y_{m,\bar{\Y}}\|_H^2\big] \nonumber \\
	&\leq
	C_{Q,T,\beta,\gamma} \, h \sum_{l=0}^{m-1} \mathrm{E} \Big[ \big\| \Y_l - \bar{\Y}_l \big\|^2_H \Big] \nonumber \\
	&\quad + C \, M \su \bigg( 
	\sum_{j \in \mathcal{J}_K} \bigg( \mathrm{E}\bigg[ \Big(
	\int_0^1 \int_0^u \Big(1 + 2 \big\| \Y_l \big\|_H + r \Big\| P_N B(\Y_l) 
	\sum_{i \in \mathcal{J}_K} \tilde{e}_i I_{(i,j),l}^Q \Big\|_H \Big) \nonumber \\
	&\quad \times \sum_{i \in \mathcal{J}_K} \big( I_{(i,j),l}^Q \big)^2
	\, \mathrm{d}r \, \mathrm{d}u \Big)^2 \bigg] \bigg)^{\frac{1}{2}} \nonumber \\
	&\quad + \sum_{j \in \mathcal{J}_K} \bigg( \mathrm{E}\bigg[ \Big(
	\int_0^1 \int_0^u \Big(1 + 2 \big\| \bar{\Y}_l \big\|_H + r \Big\| P_N B(\bar{\Y}_l) 
	\sum_{i \in \mathcal{J}_K} \tilde{e}_i I_{(i,j),l}^Q \Big\|_H \Big) \nonumber \\
	&\quad \times \sum_{i \in \mathcal{J}_K} \big( I_{(i,j),l}^Q \big)^2
	\, \mathrm{d}r \, \mathrm{d}u \Big)^2 \bigg] \bigg)^{\frac{1}{2}} \bigg)^{2} \nonumber \\
	&\leq 
	C_{Q,T,\beta,\gamma} \, h \sum_{l=0}^{m-1} \mathrm{E} \Big[ \big\| \Y_l - \bar{\Y}_l \big\|^2_H \Big] 
	+ C_{Q,T,\delta} \, M \nonumber \\
	&\quad \times \su \bigg( 
	\sum_{j \in \mathcal{J}_K} \bigg( 
	\bigg( \mathrm{E}\bigg[ \Big( \sum_{i \in \mathcal{J}_K} \big( I_{(i,j),l}^Q \big)^2 \Big)^2 \bigg]
	+ \mathrm{E}\bigg[  \Big( \sum_{i \in \mathcal{J}_K} \big( I_{(i,j),l}^Q \big)^2 \Big)^3 \Big) \bigg] \bigg) 
	\big( 1 + \mathrm{E}\big[  \big\| \Y_l \big\|_H^2 \big] \big) \bigg)^{\frac{1}{2}} \nonumber \\
	&\quad + \sum_{j \in \mathcal{J}_K} \bigg( 
	\bigg( \mathrm{E}\bigg[ \Big( \sum_{i \in \mathcal{J}_K} \big( I_{(i,j),l}^Q \big)^2 \Big)^2 \bigg]
	+ \mathrm{E}\bigg[  \Big( \sum_{i \in \mathcal{J}_K} \big( I_{(i,j),l}^Q \big)^2 \Big)^3 \Big) \bigg] \bigg) 
	\big( 1 + \mathrm{E}\big[ \big\| \bar{\Y}_l \big\|_H^2 \big] \big) \bigg)^{\frac{1}{2}} \bigg)^{2} \nonumber \\
	&\leq 
	C_{Q,T,\beta,\gamma} \, h \sum_{l=0}^{m-1} \mathrm{E} \Big[ \big\| \Y_l - \bar{\Y}_l \big\|^2_H \Big] \nonumber \\
	&\quad + C_{Q,T,\delta} \, M \su \bigg( \tr Q \bigg( (\tr Q)^2 h^4 + (\tr Q)^3 h^6 
	\bigg)^{\frac{1}{2}} \bigg)^{2}
	\big( 1 + \mathrm{E}\big[ \big\| Y_l \big\|_{H_{\delta}}^2 \big] 
	+ \mathrm{E}\big[ \big\| \bar{\Y}_l \big\|_{H_{\delta}}^2 \big] \big) \nonumber \\
	&\leq 
	C_{Q,T,\beta,\gamma} \, h \sum_{l=0}^{m-1} \mathrm{E} \Big[ \big\| \Y_l - \bar{\Y}_l \big\|^2_H \Big]
	+ C_{Q,T,\delta} \, (\tr Q)^4 h \su \big( h^2 + \tr Q \, h^4 \big) 
	\big( 1 + \mathrm{E}\big[  \big\| X_0 \big\|_{H_{\delta}}^2 \big] \big) \nonumber \\
	&\leq 
	C_{Q,T,\beta,\gamma} \, h \sum_{l=0}^{m-1} \mathrm{E} \Big[ \big\| \Y_l - \bar{\Y}_l \big\|^2_H \Big]
	+ C_{Q,T,\delta} h^2 .
	\end{align}
	%
	% --- End of case 2 with assumption (A5c)
	%
	Summarizing, we have $\mathrm{E}\big[\|\Y_m-\Y_{m,\bar{\Y}}\|_H^2\big]
	\leq C_{Q,T,\beta,\gamma} \, h \sum_{l=0}^{m-1} \mathrm{E} \Big[ \big\| \Y_l - \bar{\Y}_l\big\|^2_H \Big]
	+ C_{Q,T,\delta} h^2$ for both cases. \\ \\
	Finally, we analyze the second term in~\eqref{Eq:SplitErrorDI}.
	We basically employ the same techniques as for the previous term.
	At first, we replace the difference operator by a first order
	Taylor expansion.
	\begin{align*}
	&\mathrm{E}\big[\|\Y_{m,\bar{\Y}}-\bar{\Y}_m\|_H^2\big] \\
	&= \mathrm{E}\bigg[\Big\| P_N 
	\sum_{l=0}^{m-1}\sum_{j\in\mathcal{J}_K}e^{A(t_m-t_l)}
	\Big(\Big(B\Big(\bar{\Y}_l+\sum_{i\in\mathcal{J}_K}P_NB(\bar{\Y}_l)
	\tilde{e}_iI^Q_{(i,j),l}\Big)\tilde{e}_j-B(\bar{\Y}_l)\tilde{e}_j\Big)\\
	&\quad - \Big(B\Big(\bar{\Y}_l+\sum_{i\in\mathcal{J}_K}P_NB(\bar{\Y}_l)
	\tilde{e}_i\bar{I}^Q_{(i,j),l}\Big)\tilde{e}_j
	-B(\bar{\Y}_l)\tilde{e}_j\Big)\Big)\Big\|_H^2\bigg]\\
	&=
	\mathrm{E}\bigg[\Big\|P_N\Big(\sum_{l=0}^{m-1}
	\sum_{j\in\mathcal{J}_K}e^{A(t_m-t_l)}
	\Big(B'(\bar{\Y}_l)\Big(\sum_{i\in\mathcal{J}_K}
	P_N B(\bar{\Y}_l)\tilde{e}_iI_{(i,j),l}^Q,\tilde{e}_j \Big)\\
	&\quad +
	\int_0^1 \int_0^u B''(\bar{\xi}(\bar{Y}_l,j,r)) 
	\Big(\sum_{i\in\mathcal{J}_K} P_NB(\bar{\Y}_l)
	\tilde{e}_iI_{(i,j),l}^Q,\sum_{i\in\mathcal{J}_K} 
	P_N B(\bar{\Y}_l)\tilde{e}_iI_{(i,j),l}^Q
	\Big)\tilde{e}_j \,\mathrm{d}r\,\mathrm{d}u\\
	&\quad
	- B'(\bar{\Y}_l)\Big(\sum_{i\in\mathcal{J}_K} P_NB(\bar{\Y}_l)
	\tilde{e}_i\bar{I}_{(i,j),l}^Q,\tilde{e}_j\Big) \\
	&\quad -
	\int_0^1 \int_0^u B''(\bar{\bar{\xi}}(\bar{\Y}_l,j,r)) 
	\Big(\sum_{i\in\mathcal{J}_K}
	P_N B(\bar{\Y}_l)\tilde{e}_i\bar{I}_{(i,j),l}^Q,\sum_{i\in\mathcal{J}_K} 
	P_N B(\bar{\Y}_l ) \tilde{e}_i
	\bar{I}_{(i,j),l}^Q\Big)\tilde{e}_j\,\mathrm{d}r
	\,\mathrm{d}u \Big) \Big) \Big\|_H^2 \bigg].
	\end{align*}
	As above, we obtain for the terms involving the second derivative
	\begin{align} \label{Proof-Thm2.2-Eqn02}
	&\mathrm{E}\big[\|\Y_{m,\bar{\Y}}-\bar{\Y}_m\|_H^2\big] \nonumber \\
	&\leq C \mathrm{E}\bigg[\Big\|\sum_{l=0}^{m-1}e^{A(t_m-t_l)}
	\Big(\int_{t_l}^{t_{l+1}}B'(\bar{\Y}_l)
	\Big(\int_{t_l}^s P_NB(\bar{\Y}_l)\, 
	\mathrm{d}W_r^K\Big)\, \mathrm{d}W^K_s \nonumber \\
	&\quad
	-\sum_{i,j\in\mathcal{J}_K} \bar{I}_{(i,j),l}^Q B'(\bar{\Y}_l) 
	(P_N B(\bar{\Y}_l)\tilde{e}_i,\tilde{e}_j)\Big)
	\Big\|_H^2\bigg] \nonumber \\
	&\quad + C \mathrm{E} \bigg[ \Big\| \sum_{l=0}^{m-1} e^{A(t_m-t_l)}
	\sum_{j\in\mathcal{J}_K} \nonumber \\
	&\quad \Big( \int_0^1\int_0^u
	B''(\bar{\xi}(\bar{\Y}_l,j,r)) 
	\Big(\sum_{i\in\mathcal{J}_K} P_NB(\bar{\Y}_l)
	\tilde{e}_iI_{(i,j),l}^Q,\sum_{i\in\mathcal{J}_K}
	P_NB(\bar{\Y}_l)\tilde{e}_iI_{(i,j),l}^Q\Big)\tilde{e}_j 
	\,\mathrm{d}r\,\mathrm{d}u \nonumber \\
	&\quad
	- \int_0^1\int_0^u B''(\bar{\bar{\xi}}(\bar{\Y}_l,j,r)) 
	\Big(\sum_{i\in\mathcal{J}_K} P_NB(\bar{\Y}_l)
	\tilde{e}_i\bar{I}_{(i,j),l}^Q,
	\sum_{i\in\mathcal{J}_K} P_NB(\bar{\Y}_l)\tilde{e}_i
	\bar{I}_{(i,j),l}^Q\Big)\tilde{e}_j\,\mathrm{d}r\,\mathrm{d}u \Big) \Big\|_H^2\bigg] \nonumber \\
	&\leq C \sum_{l=0}^{m-1}\mathrm{E}\bigg[\Big\|\int_{t_l}^{t_{l+1}}
	B'(\bar{\Y}_l)\Big(\int_{t_l}^s P_NB(\bar{\Y}_l)\,
	\mathrm{d}W_r^K\Big)\, \mathrm{d}W^K_s  -
	\sum_{i,j\in\mathcal{J}_K} \bar{I}_{(i,j),l}^Q
	B'(\bar{\Y}_l) (P_NB(\bar{\Y}_l)\tilde{e}_i,\tilde{e}_j)\Big\|_H^2\bigg] \nonumber \\
	&\quad + C \Big( \sum_{l=0}^{m-1} \Big( \mathrm{E} \bigg[ \Big\| e^{A(t_m-t_l)}
	\sum_{j\in\mathcal{J}_K} \nonumber \\
	&\quad \int_0^1 \int_0^u
	B''(\bar{\xi}(\bar{\Y}_l,j,r)) 
	\Big(\sum_{i\in\mathcal{J}_K} P_NB(\bar{\Y}_l)
	\tilde{e}_iI_{(i,j),l}^Q,\sum_{i\in\mathcal{J}_K}
	P_NB(\bar{\Y}_l)\tilde{e}_iI_{(i,j),l}^Q\Big)\tilde{e}_j
	\,\mathrm{d}r\,\mathrm{d}u \Big\|_H^2\bigg] \Big)^{\frac{1}{2}} \nonumber \\
	&\quad
	+ \sum_{l=0}^{m-1} \Big( \mathrm{E} \bigg[ \Big\| e^{A(t_m-t_l)}
	\sum_{j\in\mathcal{J}_K} \nonumber \\
	&\quad \int_0^1 \int_0^u B''(\bar{\bar{\xi}}(\bar{\Y}_l,j,r)) 
	\Big(\sum_{i\in\mathcal{J}_K} P_N B(\bar{\Y}_l)
	\tilde{e}_i\bar{I}_{(i,j),l}^Q,
	\sum_{i\in\mathcal{J}_K} P_NB(\bar{\Y}_l)\tilde{e}_i
	\bar{I}_{(i,j),l}^Q \Big) \tilde{e}_j \, \mathrm{d}r \, \mathrm{d}u \Big\|_H^2\bigg] 
	\Big)^{\frac{1}{2}} \Big)^2 .
	\end{align}
	The first term is the error that results from the approximation
	of the iterated stochastic integral. Depending on the choice of the scheme, 
	this error estimate may differ. Assumption~\eqref{Cond1:DI} states
	that
	\begin{align*}
	&\bigg( \mathrm{E}\bigg[\Big\|\int_{t_l}^{t_{l+1}}B'(\bar{\Y}_l)
	\Big(\int_{t_l}^s P_N B(\bar{\Y}_l) \, \mathrm{d}W_r^K\Big) \, \mathrm{d}W^K_s
	- \sum_{i,j\in\mathcal{J}_K} \bar{I}_{(i,j),l}^Q B'(\bar{\Y}_l)
	(P_N B(\bar{\Y}_l)\tilde{e}_i, \tilde{e}_j)\Big\|_H^2\bigg] \bigg)^{\frac{1}{2}}
	\leq \mathcal{E}(M,K)
	\end{align*}
	for all $l\in\{0,\ldots,m-1\}$, $m\in\{1,\ldots,M\}$, $h>0$
	and $M,K\in\mathbb{N}$. \\ \\
	%
	% --- Begin of case 1 with assumption (A5a)
	%
	Case~1: Assume that assumption (A5\ref{A5a}) is fulfilled, i.e., 
	Lemma~\ref{Proof:Lemma-Moment-nonComm} is valid for any $p \geq 2$.
	Analogously to the calculations in \eqref{Proof-Thm2.2-Eqn03}, we get 
	for \eqref{Proof-Thm2.2-Eqn02}
	\begin{align*}
	\mathrm{E}\big[\|\Y_{m,\bar{\Y}}-\bar{\Y}_m\|_H^2\big]
	&\leq C \sum_{l=0}^{m-1} \mathcal{E}(M,K)^2
	+ C M \sum_{l=0}^{m-1}\bigg(\sum_{j\in\mathcal{J}_K}
	\bigg(\mathrm{E}\bigg[\Big( \sum_{i\in\mathcal{J}_K} \big(I_{(i,j),l}^Q\big)^2\Big)^2 \big\|
	B(\bar{\Y}_l)\big\|^4_{L(U,H)}\bigg]\bigg)^{\frac{1}{2}}\bigg)^2 \\
	&\quad + C M \sum_{l=0}^{m-1}\Big(\sum_{j\in\mathcal{J}_K}
	\bigg(\mathrm{E}\bigg[ \Big( \sum_{i\in\mathcal{J}_K} \big(\bar{I}_{(i,j),l}^Q\big)^2\Big)^2
	\big\|B(\bar{\Y}_l)\big\|^4_{L(U,H)}\bigg]\bigg)^{\frac{1}{2}}\bigg)^2.
	\end{align*}
	By assumption~\eqref{Cond2:DI-A5a} and
	the properties of $I_{(i,j),l}^Q$
	as well as Remark~\ref{Rem:MomentYBar}, we obtain
	\begin{align*}
	\mathrm{E}\big[\|\Y_{m,\bar{\Y}}-\bar{\Y}_m\|_H^2\big]
	&\leq C \sum_{l=0}^{m-1} \mathcal{E}(M,K)^2 \\  
	&\quad +C M \sum_{l=0}^{m-1}\bigg(\sum_{j\in\mathcal{J}_K} 
	\bigg(\mathrm{E}\Big[ \Big( \sum_{i\in\mathcal{J}_K} \big(I_{(i,j),l}^Q\big)^2\Big)^2
	\Big]
	\mathrm{E}\Big[\big\|
	B(\bar{\Y}_l)\big\|^4_{L(U,H)}\Big]\bigg)^{\frac{1}{2}}\bigg)^2 \\
	&\quad + C M \sum_{l=0}^{m-1} \bigg( \sum_{j\in\mathcal{J}_K} 
	\bigg( \mathrm{E} \Big[ \Big( \sum_{i\in\mathcal{J}_K} \big(\bar{I}_{(i,j),l}^Q\big)^2\Big)^2
	\Big]
	\mathrm{E}\Big[\big\|B(\bar{\Y}_l)\big\|^4_{L(U,H)}\Big]\bigg)^{\frac{1}{2}}\bigg)^2\\
	&\leq C \sum_{l=0}^{m-1} \mathcal{E}(M,K)^2
	+ C_Q M \sum_{l=0}^{m-1}\Big(h^2\Big(
	\mathrm{E}\big[\|B(\bar{\Y}_l)\|^4_{L(U,H)}\big] \Big)^{\frac{1}{2}}\Big)^2 \nonumber \\
	&
	\leq C M \mathcal{E}(M,K)^2 + C_{Q,T,\delta} h^2,
	\end{align*}
	which completes the proof for case~1. \\ \\
	%
	% --- End of case 1 with assumption (A5a)
	%
	%If (A5b) holds instead, we obtain this estimate  similarly.    
	%
	% --- Begin of case 2 with assumption (A5c)
	%
	Case~2: If assumption (A5\ref{A5c}) is fulfilled, then 
	Lemma~\ref{Proof:Lemma-Moment-nonComm} is valid for $p = 2$. 
	Analogously to the computations in \eqref{Proof-Thm22-Case2-FinEst},
	we get with assumption~\eqref{Cond2:DI-A5c} that
	\begin{align*}
	&\mathrm{E}\big[\|\Y_{m,\bar{\Y}}-\bar{\Y}_m\|_H^2\big] \\
	&\leq C \sum_{l=0}^{m-1} \mathcal{E}(M,K)^2 
	+ C \, M \su \bigg( 
	\sum_{j \in \mathcal{J}_K} 
	\bigg( \mathrm{E}\bigg[ \Big( \sum_{i \in \mathcal{J}_K} \big( I_{(i,j),l}^Q \big)^2 \Big)^2 \bigg]
	+ \mathrm{E}\bigg[  \Big( \sum_{i \in \mathcal{J}_K} \big( I_{(i,j),l}^Q \big)^2 \Big)^3 \Big) \bigg] 
	\bigg)^{\frac{1}{2}} \nonumber \\
	&\quad + \sum_{j \in \mathcal{J}_K}
	\bigg( \mathrm{E}\bigg[ \Big( \sum_{i \in \mathcal{J}_K} \big( \bar{I}_{(i,j),l}^Q \big)^2 \Big)^2 \bigg]
	+ \mathrm{E}\bigg[  \Big( \sum_{i \in \mathcal{J}_K} \big( \bar{I}_{(i,j),l}^Q \big)^2 \Big)^3 \Big) \bigg] 
	\bigg)^{\frac{1}{2}} \bigg)^{2} \big( 1 + \mathrm{E}\big[ \big\| \bar{\Y}_l \big\|_H^2 \big] \big) \nonumber \\
	&\leq C \sum_{l=0}^{m-1} \mathcal{E}(M,K)^2
	+ C_{T} \, (\tr Q)^4 h \su \big( h^2 + \tr Q \, h^4 \big) 
	\big( 1 + \mathrm{E}\big[  \big\| X_0 \big\|_{H_{\delta}}^2 \big] \big) \nonumber \\
	&\leq C M \mathcal{E}(M,K)^2 + C_{Q,T,\delta} h^2.
	\end{align*}
	This proves the statement for case~2.
\end{proof}
%
%
%
%
%
%\begin{acknowledgements}
\textbf{Acknowledgements}
Funding and support by
the Graduate School for Computing in Medicine and Life Sciences funded by
Germany's Excellence Initiative [DFG GSC 235/2] and in addition
by the Cluster of Excellence ``The Future Ocean'' is gratefully acknowledged. 
``The Future Ocean'' is funded within the framework
of the Excellence Initiative by the Deutsche Forschungsgemeinschaft (DFG) 
on behalf of the German federal and state governments.
%\end{acknowledgements}
%
%
\bibliographystyle{abbrv}
\bibliography{Diss}

\providecommand{\noopsort}[1]{}
\begin{thebibliography}{10}

\bibitem{MR2996432}
A.~Barth and A.~Lang.
\newblock Milstein approximation for advection-diffusion equations driven by
  multiplicative noncontinuous martingale noises.
\newblock {\em Appl. Math. Optim.}, 66(3):387--413, 2012.

\bibitem{MR3027891}
A.~Barth and A.~Lang.
\newblock {$L\sp p$} and almost sure convergence of a {M}ilstein scheme for
  stochastic partial differential equations.
\newblock {\em Stochastic Process. Appl.}, 123(5):1563--1587, 2013.

\bibitem{MR3534472}
S.~Becker, A.~Jentzen, and P.~E. Kloeden.
\newblock An exponential {W}agner-{P}laten type scheme for {SPDE}s.
\newblock {\em SIAM J. Numer. Anal.}, 54(4):2389--2426, 2016.

\bibitem{MR2174871}
B.~Berg\'e and B.~Saussereau.
\newblock On the long-time behaviour of a class of parabolic {SPDE}'s:
  monotonicity methods and exchange of stability.
\newblock {\em ESAIM Probab. Stat.}, 9:254--276, 2005.

\bibitem{MR609181}
J.~M.~C. Clark and R.~J. Cameron.
\newblock The maximum rate of convergence of discrete approximations for
  stochastic differential equations.
\newblock In {\em Stochastic differential systems ({P}roc. {IFIP}-{WG} 7/1
  {W}orking {C}onf., {V}ilnius, 1978)}, volume~25 of {\em Lecture Notes in
  Control and Information Sci.}, pages 162--171. Springer, Berlin-New York,
  1980.

\bibitem{MR3236753}
G.~Da~Prato and J.~Zabczyk.
\newblock {\em Stochastic equations in infinite dimensions}, volume 152 of {\em
  Encyclopedia of Mathematics and its Applications}.
\newblock Cambridge University Press, Cambridge, second edition, 2014.

\bibitem{MCQMC2018}
C.~{\noopsort{Hallern}}{von Hallern} and A.~{R{\"o}{\ss}ler}.
\newblock {An Analysis of the Milstein Scheme for SPDEs without a Commutative
  Noise Condition}.
\newblock In B.~Tuffin and P.~L'Ecuyer, editors, {\em {Monte Carlo and
  Quasi-Monte Carlo Methods, MCQMC 2018}}, volume 324 of {\em Springer
  Proceedings in Mathematics \& Statistics}, pages 503--521. Springer, Cham,
  2020.

\bibitem{MR2471778}
A.~Jentzen and P.~E. Kloeden.
\newblock Overcoming the order barrier in the numerical approximation of
  stochastic partial differential equations with additive space-time noise.
\newblock {\em Proc. R. Soc. A}, 465(2102):649--667, 2009.

\bibitem{MR2852200}
A.~Jentzen and M.~R{\"o}ckner.
\newblock Regularity analysis for stochastic partial differential equations
  with nonlinear multiplicative trace class noise.
\newblock {\em J. Differential Equations}, 252(1):114--136, 2012.

\bibitem{MR3320928}
A.~Jentzen and M.~R{\"o}ckner.
\newblock A {M}ilstein scheme for {SPDE}s.
\newblock {\em Found. Comput. Math.}, 15(2):313--362, 2015.

\bibitem{MR1214374}
P.~E. Kloeden and E.~Platen.
\newblock {\em Numerical solution of stochastic differential equations},
  volume~23 of {\em Applications of Mathematics (New York)}.
\newblock Springer, Berlin, second corrected printing edition, 1995.

\bibitem{MR1178485}
P.~E. Kloeden, E.~Platen, and I.~W. Wright.
\newblock The approximation of multiple stochastic integrals.
\newblock {\em Stoch. Anal. Appl.}, 10(4):431--441, 1992.

\bibitem{MR1825100}
P.~E. Kloeden and S.~Shott.
\newblock Linear-implicit strong schemes for {I}t\^o-{G}alerkin approximations
  of stochastic {PDE}s.
\newblock {\em J. Appl. Math. Stochastic Anal.}, 14(1):47--53, 2001.

\bibitem{MR3305472}
M.~C. Kunze.
\newblock Stochastic reaction-diffusion systems with {H}\"older continuous
  multiplicative noise.
\newblock {\em Stoch. Anal. Appl.}, 33(2):331--355, 2015.

\bibitem{MR2677551}
A.~Lang, P.-L. Chow, and J.~Potthoff.
\newblock Almost sure convergence of a semidiscrete {M}ilstein scheme for
  {SPDE}s of {Z}akai type.
\newblock {\em Stochastics}, 82(3):315--326, 2010.

\bibitem{Diss}
C.~{Leonhard}.
\newblock {Derivative-free numerical schemes for stochastic partial
  differential equations}.
\newblock {\em Ph.D. thesis, Institute of Mathematics, Universit\"at zu
  L\"ubeck}, 2016.

\bibitem{2015arXiv150908427L}
C.~{Leonhard} and A.~{R{\"o}{\ss}ler}.
\newblock {An efficient derivative-free Milstein scheme for stochastic partial
  differential equations with commutative noise}.
\newblock {\em ArXiv e-prints, v2}, Sept. 2015.

\bibitem{MR3842926}
C.~Leonhard and A.~R\"{o}{\ss}ler.
\newblock Enhancing the {O}rder of the {M}ilstein {S}cheme for {S}tochastic
  {P}artial {D}ifferential {E}quations with {C}ommutative {N}oise.
\newblock {\em SIAM J. Numer. Anal.}, 56(4):2585--2622, 2018.

\bibitem{MR3949104}
C.~Leonhard and A.~R\"{o}{\ss}ler.
\newblock Iterated stochastic integrals in infinite dimensions: approximation
  and error estimates.
\newblock {\em Stoch. Partial Differ. Equ. Anal. Comput.}, 7(2):209--239, 2019.

\bibitem{MR3047942}
G.~J. Lord and A.~Tambue.
\newblock Stochastic exponential integrators for the finite element
  discretization of {SPDE}s for multiplicative and additive noise.
\newblock {\em IMA J. Numer. Anal.}, 33(2):515--543, 2013.

\bibitem{MR2127642}
Q.~Luo, F.~Deng, J.~Bao, B.~Zhao, and Y.~Fu.
\newblock Stabilization of stochastic {H}opfield neural network with
  distributed parameters.
\newblock {\em Sci. China Ser. F}, 47(6):752--762, 2004.

\bibitem{MR710486}
A.~Pazy.
\newblock {\em Semigroups of linear operators and applications to partial
  differential equations}, volume~44 of {\em Applied Mathematical Sciences}.
\newblock Springer, New York, 1983.

\bibitem{MR2329435}
C.~Pr{\'e}v{\^o}t and M.~R{\"o}ckner.
\newblock {\em A concise course on stochastic partial differential equations},
  volume 1905 of {\em Lecture Notes in Mathematics}.
\newblock Springer, Berlin, 2007.

\bibitem{MR2520127}
M.~R{\"o}ckner and X.~Zhang.
\newblock Stochastic tamed 3{D} {N}avier-{S}tokes equations: existence,
  uniqueness and ergodicity.
\newblock {\em Probab. Theory Related Fields}, 145(1-2):211--267, 2009.

\bibitem{MR2338537}
A.~R{\"o}{\ss}ler.
\newblock Second order {R}unge-{K}utta methods for {S}tratonovich stochastic
  differential equations.
\newblock {\em BIT}, 47(3):657--680, 2007.

\bibitem{MR2505871}
A.~R{\"o}{\ss}ler.
\newblock Second order {R}unge-{K}utta methods for {I}t\^o stochastic
  differential equations.
\newblock {\em SIAM J. Numer. Anal.}, 47(3):1713--1738, 2009.

\bibitem{MR2669396}
A.~R{\"o}{\ss}ler.
\newblock Runge-{K}utta methods for the strong approximation of solutions of
  stochastic differential equations.
\newblock {\em SIAM J. Numer. Anal.}, 48(3):922--952, 2010.

\bibitem{schnoerr2016cox}
D.~Schnoerr, R.~Grima, and G.~Sanguinetti.
\newblock Cox process representation and inference for stochastic
  reaction-diffusion processes.
\newblock {\em Nature communications}, 7:11729, 2016.

\bibitem{MR1873467}
G.~R. Sell and Y.~You.
\newblock {\em Dynamics of evolutionary equations}, volume 143 of {\em Applied
  Mathematical Sciences}.
\newblock Springer, New York, 2002.

\bibitem{MR2136207}
J.~B. Walsh.
\newblock Finite element methods for parabolic stochastic {PDE}'s.
\newblock {\em Potential Anal.}, 23(1):1--43, 2005.

\bibitem{MR3011387}
X.~Wang and S.~Gan.
\newblock A {R}unge-{K}utta type scheme for nonlinear stochastic partial
  differential equations with multiplicative trace class noise.
\newblock {\em Numer. Algorithms}, 62(2):193--223, 2013.

\bibitem{MR1843055}
M.~Wiktorsson.
\newblock Joint characteristic function and simultaneous simulation of iterated
  {I}t\^o integrals for multiple independent {B}rownian motions.
\newblock {\em Ann. Appl. Probab.}, 11(2):470--487, 2001.

\end{thebibliography}
\end{document}